\theoremstyle{definition}
\newtheorem{theorem}{Theorem}[section]
\theoremstyle{definition}
\newtheorem{lemma}[theorem]{Lemma}
\theoremstyle{definition}
\newtheorem{corollary}[theorem]{Corollary}
\theoremstyle{definition}
\newtheorem{proposition}[theorem]{Proposition}
\theoremstyle{definition}
\theoremstyle{definition}
\newtheorem{remark}[theorem]{Remark}
\theoremstyle{definition}
\newtheorem{example}[theorem]{Example}
\theoremstyle{definition}
\numberwithin{equation}{section}
\numberwithin{equation}{section}
\numberwithin{equation}{section}
\theoremstyle{definition}
\theoremstyle{definition}
\newtheorem{notation}[theorem]{Notation}
\theoremstyle{definition}
\theoremstyle{definition}
\newtheorem{scholium}[theorem]{Digression}
\theoremstyle{definition}
\newtheorem{construction}[theorem]{Construction}
\theoremstyle{definition}
\newtheorem{problem}[theorem]{Problem}
\DeclareMathOperator{\supp}{supp}
\DeclareMathOperator{\sign}{sign}
\DeclareMathOperator*{\esssup}{ess\,sup}
\newcommand{\norm}[1]{\left\lVert#1\right\rVert}
\newcommand{\vertiii}[1]{{\left\vert\kern-0.25ex\left\vert\kern-0.25ex\left\vert #1 
		\right\vert\kern-0.25ex\right\vert\kern-0.25ex\right\vert}}
\newcommand{\abs}[1]{\left\lvert#1\right\rvert}
\newcommand{\thickbar}[1]{\mathbf{\bar{\text{$#1$}}}} %thick bar
\renewenvironment{proof}{\noindent {\bf Proof.}}{\qed}
\def\author@andify{
	\nxandlist {\unskip ,\penalty-1 \space\ignorespaces}
	{\unskip {} \@@and~}
	{\unskip \penalty-2 \space \@@and~}
}
\newcommand\xqed[1]{%
	\leavevmode\unskip\penalty9999 \hbox{}\nobreak\hfill
	\quad\hbox{#1}}
\newcommand\demo{\xqed{{\Large $\blacktriangle$}}}
\def\l@subsection{\@tocline{2}{0pt}{1pc}{5pc}{}} \def\l@subsection{\@tocline{2}{0pt}{2pc}{6pc}{}}
\begin{document}
\title{Essential norms of pointwise multipliers in the non-algebraic setting}

\author{Tomasz Kiwerski}
\address[Tomasz Kiwerski]{Pozna\'{n} University of Technology, Institute of Mathematics, Piotrowo 3A, 60-965 Pozna\'{n}, Poland}
\email{\href{mailto:tomasz.kiwerski@gmail.com}{\tt tomasz.kiwerski@gmail.com}}

\author{Jakub Tomaszewski}
\address[Jakub Tomaszewski]{Pozna\'{n} University of Technology, Institute of Mathematics, Piotrowo 3A, 60-965 Pozna\'{n}, Poland}
\email{\href{mailto:tomaszewskijakub@protonmail.com}{\tt tomaszewskijakub@protonmail.com}}

\maketitle

\begin{abstract}
	Motivated by some recent results, but also referring to recognized classics, we compute the essential norm and the weak essential norm of
	multiplication operators acting between two distinct K{\" o}the spaces both defined over the same $\sigma$-finite measure space.
	A by-product of the technology we have developed here are some applications to Banach sequence spaces related to decreasing functions
	and to Banach spaces of analytic functions on the unit disc, in particular, Hardy spaces.
	We will close our work with some specific examples illustrating the previously obtained results including Musielak--Orlicz sequence spaces,
	in particular, Nakano sequence spaces, as well as Lorentz and Marcinkiewicz sequence spaces.
\end{abstract}

\tableofcontents

\footnote{{\tiny}
	{\it Date:} \longdate{\today}.
		
	2020 \textit{Mathematics Subject Classification}. Primary: 46B45, 46E30, 47B65; Secondary: 46B42.
	
	\textit{Key words and phrases}. essential norms; multiplication operators; K{\" o}the spaces; compact operators; weakly compact operators.
	}

\section{{\bf Introduction}}

\subsection{Motivation and goals}
Problems focused around the task of precisely determining the essential norm, that is, the quotient norm modulo the compact operators,
of {\it multiplication operators} $M_{\lambda} \colon f \rightsquigarrow \left[ \omega \rightsquigarrow \lambda(\omega) f(\omega) \right]$
and {\it composition operators} $C_{\varphi} \colon f \rightsquigarrow \left[ \omega \rightsquigarrow (f \circ \varphi)(\omega) \right]$
(or, a mix of both, the so-called {\it weighted composition operators})
acting between various function spaces has a long history and a well-established reputation (see, for example,
$\bullet$ \cite{Axl82}, \cite{AJS80}, \cite{AS83}, \cite{BDL99b}, \cite{BRV99}, \cite{LR93}, \cite{ST94} and \cite{Sch80} for multiplication operators;
$\bullet$ \cite{BDL99} and \cite{Sha87} for composition operators; and
$\bullet$ \cite{MR00} for weighted composition operators).
Nevertheless, even more recently, this subject attracts the attention of many researchers and seems to be still an inexhaustible
source of inspirations (to name just a few, let us mention
$\bullet$ \cite{ABR18}, \cite{BR20}, \cite{DSZ11}, \cite{ES05}, \cite{GJL06}, \cite{GSZZ09}, \cite{DJL19}, \cite{Mle09} and \cite{Sch22} in the context of multiplication operators;
and $\bullet$ \cite{Bay23}, \cite{CHD03}, \cite{CZ03}, \cite{CZ07}, \cite{Dem11}, \cite{HKLRS12}, \cite{IIO11} and \cite{Lef09} in the context of weighted composition operators).

However, what sets the overwhelming majority of papers dealing with the essential norms of multiplication operators apart
is the fact that they focus almost exclusively on the case when the multiplication operator
$M_{\lambda} \colon f \rightsquigarrow \left[ \omega \rightsquigarrow \lambda(\omega) f(\omega) \right]$
acts from one to the same space (perhaps modified by different weights). In the simplest setting, this situation looks as follows
\begin{equation} \label{EQ: ess norm lp -> lp}
	\norm{M_{\lambda} \colon \ell_p \rightarrow \ell_p}_{ess} = \limsup_{n \rightarrow \infty} \abs{\lambda_n}
\end{equation}
and
\begin{equation*}
	\norm{M_{\lambda} \colon L_p \rightarrow L_p}_{ess} = \norm{\lambda}_{L_{\infty}} = \norm{M_{\lambda} \colon L_p \rightarrow L_p}.
\end{equation*}
This simply means that while the compact multiplication operators acting between $\ell_p$'s are precisely those whose symbols form a null sequences,
there are no compact multiplication operators acting between $L_p$'s except for the one generated by the function constantly equal to zero.

Our object in studying essential norms of the pointwise multipliers acting between two K{\" o}the spaces is basically twofold.

What can be seem as the main novelty here, we will consider the {\it non-algebraic setting}, that is, the situation in which the multiplication
operator $M_{\lambda} \colon f \rightsquigarrow \left[ \omega \rightsquigarrow \lambda(\omega) f(\omega) \right]$ acts between two {\it distinct} spaces.
In other words, we are interested in a precise estimation of the quantity
\begin{equation*}
	\norm{M_{\lambda} \colon X \rightarrow Y}_{ess} \coloneqq \inf \left\{ \norm{M_{\lambda} - K} \colon K \colon X \rightarrow Y \text{ is compact} \right\},
\end{equation*}
where $X$ and $Y$ are two K{\" o}the spaces defined on the same $\sigma$-finite measure space $(\Omega,\Sigma,\mu)$.
Let us observe that in this situation the symbol $\lambda$ of $M_{\lambda}$ belongs to the vector space
\begin{equation*}
	M(X,Y) \coloneqq \left\{ g \in L_0 \colon fg \in Y \text{ for all } f \in X \right\}
\end{equation*}
which, when equipped with the natural operator norm
\begin{equation*}
	\norm{g}_{M(X,Y)} \coloneqq \norm{M_g \colon f \rightsquigarrow fg}_{X \rightarrow Y} = \sup_{\norm{f}_X \leqslant 1} \norm{fg}_Y,
\end{equation*}
becomes a K{\" o}the space itself commonly called the {\it space of pointwise multipliers} between $X$ and $Y$.

\begin{scholium}
	The idea of considering the non-algebraic setting seems to be in the air for some time (see, for example, \cite{ABR18}, \cite{Bay23}, \cite{CHD03},
	\cite{CZ07}, \cite{Dem11}, \cite{ES05} and \cite{LM21}).
	In the context of K{\" o}the spaces and essential norms of multiplication operators acting between them, it seems to be a blank page.
	The reason for this, it seems, is not due to lack of interest.
	This is easy to justify because, on the one hand, our results answer a rather old Djakov and Ramanujan's question from \cite{DR00},
	five of six questions posed quite recently by Ramos-Fern{\' a}ndez in \cite{RF16},
	and the question asked by Ramos-Fern{\' a}ndez, Rivera-Sarmiento and Salas-Brown in \cite{RFRSSB19}.
	On the other, we also re-prove and improve some results from \cite{AGHLM18}, \cite{ABR18}, \cite{Ben96}, \cite{BR20} and \cite{Mle09}
	(in the context of algebraic setting, this list can easily be significantly extended, but see Section~\ref{SECTION: Neverending Story}).
\end{scholium}

Now, in stark contrast to the algebraic setting, when simply the space $M(X,X)$ coincide (up to the equality of norms) with $L_{\infty}$,
the space $M(X,Y)$ can be significantly different than $L_{\infty}$. For example, for $1 \leqslant q < p \leqslant \infty$ with $1/r = 1/q - 1/p$,
it is straightforward to see that $M(L_p, L_q) \equiv L_r$.
Moreover, since bounded multiplication operators $M_{\lambda} \colon X \rightarrow Y$ are
in correspondence with the K{\" o}the space $M(X,Y)$, so it is natural to expect\footnote{Directly from (\ref{EQ: ess norm lp -> lp}) it follows
that $M_{\lambda} \colon \ell_p \rightarrow \ell_p$ is compact if, and only if, $\lambda = \{\lambda_n\}_{n=1}^{\infty} \in c_0$. On the other hand,
the separable part of the space $M(\ell_p,\ell_p) \equiv \ell_{\infty}$, that is, $M(\ell_p,\ell_p)_o$, coincide with $c_0$.} that compact
multiplication operators $M_{\lambda} \colon X \rightarrow Y$
somehow correspond to the separable part of the space $M(X,Y)$, that is, the ideal $M(X,Y)_o$. Ideologically, it is a bit reminiscent of Calkin's
correspondence \cite{Cal41}, which in simplified version says, that there is a bijection between the operator ideal of bounded (respectively, compact)
operators on a separable Hilbert space and a K{\" o}the sequence space $\ell_{\infty}$ (respectively, $c_0$).

Secondly, and this seems to be the most pragmatic here, since the class of K{\" o}the spaces often serve as a basic skeleton for building other spaces
(to name just a few, Bergman spaces, Bloch spaces, Ces{\` a}ro spaces, Hardy spaces, Sobolev spaces and Tandori spaces can be build upon Lebesgue, Orlicz,
Lorentz, Marcinkiewicz and Musielak--Orlicz spaces), so it can be expected\footnote{The
reason for this is actually the result of Anderson and Shields saying that if $E$ is a K{\" o}the sequence space and $V$ is a vector space then
$M(V,E) = M(\text{solid}(V),E)$, where $\text{solid}(V)$ is the {\bf solid envelope} of $V$, that is, the intersection of all vector spaces
with the ideal property that contain $V$ as a subspace (see \cite[Lemma~2 and Lemma~3(i)]{AS76}).}
that the techniques developed by us will find some applications in certain classes of function spaces (sometimes not belonging to the family of K{\" o}the spaces).
We make here the first step in this direction by improving Mleczko's \cite{Mle09} result on compact Fourier multipliers acting on Banach spaces of
analytic functions on the unit disc (in particular, acting on abstract Hardy spaces), and Albanese, Bonet and Ricker's
\cite{ABR18}, Bonet and Ricker \cite{BR20}, and Bennett's \cite{Ben96} results on compact multipliers acting between classical Ces{\` a}ro and Tandori spaces.

What also seems to be noteworthy is that the non-algebraic setting is not just a simple generalization of what is currently known.
It requires a fresh approach avoiding, for example, the spectral theory techniques (after all, $M(X,X) \equiv L_{\infty}$
is a Banach algebra, which cannot be said about $M(X,Y)$ in general). Moreover, since the description of compact multipliers between
$X$ and $Y$ is inextricably linked with the description of a separable part of the space $M(X,Y)$, some more advanced theory related
to the so-called {\it arithmetic of K{\" o}the spaces}\footnote{Arithmetic of K{\" o}the spaces, broadly speaking, can be described as a certain collection
of techniques dealing with constructions such as the pointwise product, the pointwise multipliers and the symmetrization (we refer to \cite{KLM19} and Section~\ref{SECTION: examples}
for some illustrations of these ideas and examples; cf. also \cite{CS14}, \cite{CDSP08}, \cite{KLM12, KLM14} and \cite{Sch10}).}
and {\it local properties}\footnote{The {\it order continuity} property will play the main role here, because \enquote{globally} it is an analogue
of the separability, but can also be \enquote{localized} at a single point, that is, it make perfect sense to ask if a function in a K{\" o}the space
is order continuous or not.} of K{\" o}the spaces, comes into play.

After all this propaganda, let us only add that the proofs we propose seems to lie much closer to the methods of functional analysis, in particular,
to the theory of function spaces and to the operator theory, than to rely on some {\it ad hoc} tricks. As usual, more sophisticated methods put certain
phenomena in a more complete and interesting context and, we hope, that this is the case here.

\subsection{Neverending story} \label{SECTION: Neverending Story}

We have one more, slightly less official, but short, story to tell. Initially, the reason for our work was Voigt's
preprint \cite{Voi22} posted on {\tt arXiv.org} in late March, where the essential norm of the multiplication operators acting from $L_p$ into itself is calculated.
Already a superficial reading of this work, as well as the works of authors cited therein, almost immediately convinced us that everything should work
in much more general framework. However, we also could not shake off the impression that the problem raised in \cite{Voi22} is too classical not to have
a solution in the existing literature. 
To our surprise, just a few weeks later, a commentary on Voigt's article in the form of Schep's preprint \cite{Sch22} appeared. In introduction to his paper, Schep writes
\begin{quote}
	\begin{small}
		{\it \enquote{We refer to a recent preprint of J. Voigt \cite{Voi22} for some references. It seems that these authors are not aware that these multiplication
		operators are examples of order bounded band preserving operators (or orthomorphisms). (...) The current paper was prompted by the preprint \cite{Voi22},
		which didn't use our results form \cite{Sch80}. By using the term abstract multiplication operator in our title we hope that future duplication
		of our results by authors working on these questions for concrete function spaces will be avoided\footnote{Unfortunately, in retrospect, it seems
		that this appeal did not necessarily fall on fertile ground (see, for example, \cite{RFRSSB24}).} by a web search or MathSciNet search.
		(...) The current paper describes completely the essential spectrum and norm of an operator $T \in Z(E)$ for an arbitrary Banach lattice $E$.}}
	\end{small}
\end{quote}
Although it is not our intention to breach open doors here, we feel that the last word has not yet been given, because as we have already tried to explain above
an analogous questions for something, what we call here the {\it non-algebraic setting}, seems to be almost completely untouched.

\subsection{Outline}

Now, we describe the organization of this work and mention some other results contained herein.

We begin in Section~\ref{SECTION: Toolbox} with rather rapid recollection of required definitions and notation that will be used through this paper.
As it seems to us, the only slightly unconventional approach is presented here in the context of Rademacher functions (see Construction~\ref{DEF: Abstract Rademachers}).

Section~\ref{SECTION: ESS NORMS} is the main part of the paper.
At the beginning we compute the essential norms of multiplication operators acting between two distinct K{\" o}the function spaces
(Theorem~\ref{THM: essnorm multipliers function}) or two distinct K{\" o}the sequence spaces (Theorem~\ref{THM: Essential norm Kothe sequence})
culminating in Theorem~\ref{THM: ess norm func + seq}, where the essential norm of the multiplication operator acting between two
distinct K{\" o}the spaces is calculated.
We then explain how these results can be viewed through the prism of measures of non-compactness (Theorem~\ref{thm: ess norm ri})
and Pitt's theorem (Proposition~\ref{PROP: Pitt's multiplication}).
Here we also answer Djakov and Ramanujan's question from \cite{DR00} (Remark~\ref{REMARK: Djakov-Ramanujan}).
In the following, we will focus our attention on the weak essential norms of multiplications operators acting between two distinct K{\" o}the
function spaces (Theorem~\ref{THM: weak ess norm}). {\it En route} to the proof on this result, we will use Flores, Tradacete and Troitsky's
notion of disjointly homogeneous spaces in tandem with some recent results from \cite{LTM22} about the Dunford--Pettis criterion in K{\" o}the
function spaces. Closing this thread, we will give an example of a multiplication operator acting between two Marcinkiewicz spaces which
is not weakly compact (Example~\ref{EXP: not weakly compact between Marcinkiewicz}), and of a weakly compact multiplication operator
acting between Lorentz spaces with a symbol which is not order continuous (Example~\ref{EXP: Lorentz symbol not OC but WC}).
Finally, the last theme that we will devote our attention is the interplay between multiplier and its symbol.
More precisely, relying on certain ideas from \cite{KKT22} we will show an elegant result (Theorem~\ref{Proposition: odleglosci = odleglosci})
which equates the (weak) essential norm of the multiplication operator $M_{\lambda} \colon X \rightarrow Y$ with the distance from its symbol
$\lambda$ to the ideal $M(X,Y)_o$ of all order continuous functions in $M(X,Y)$.
We also propose aesthetically pleasing expression of the weak essential norm of multipliers acting between two rearrangement invariant function
spaces (Corollary~\ref{COR: ess r.i. function}).

Then, in Section~\ref{SECTION: Applications}, we will harness our techniques and results obtained so far (along with some ideas developed in \cite{KT24}) to work.
To begin with, by calculating the essential norms of multiplication operators acting between two distinct abstract Ces{\` a}ro sequence spaces
or two distinct abstract Tandori sequence spaces (Theorem~\ref{THM: ess norm Cezaro and Tandori function}), we answer Ramos-Fern{\' a}ndez,
Rivera-Sarmiento and Salas-Brown's question from \cite{RFRSSB19}.
To illustrate our, as nomenclature itself suggests, \enquote{abstract} approach, we will consider (see Example~\ref{EXAMPLE: compact cesp -> cesq faktoryzacja},
Example~\ref{Remark: ces_p -> ces_q [ABR18]} and Remark~\ref{REMARK: jak wynioskowac Tandori lp -> lq}) pointwise multipliers acting between
the classical Ces{\` a}ro sequence $ces_p$ spaces, and, their duals, the so-called Tandori sequence spaces $\widetilde{\ell_p}$.
This way we will recreate Albanese, Bonet and Ricker's result from \cite{ABR18}, Bonet and Ricker's result from \cite{BR20} and some of Bennett's results from \cite{Ben96}.
Next, we do more or less the same in the case of abstract Ces{\` a}ro function spaces (Theorem~\ref{THM: ess norm Cezaro and Tandori function}).
Here we re-prove Al Alam, Gaillard, Habib, Lef{\` e}vre and Maalouf's result from \cite{AGHLM18}
We will close this section by computing the essential norms of Fourier multipliers acting from a Banach space of analytic functions on the unit disc
$\mathbb{D}$, in particular, from an abstract Hardy space built upon a K{\" o}the function space defined the unit circle $\mathbb{T}$, into some
K{\" o}the sequence space (Theorem~\ref{THM: ess norm holomorphic}). This improves Mleczko's main result from \cite{Mle09}.

The last Section~\ref{SECTION: examples} is devoted to examples. Here, we calculate the space $M(X,Y)$ for the different configurations of spaces $X$ and $Y$,
coming from some more concrete classes of K{\" o}the sequence spaces, like Musielak--Orlicz sequence spaces (most importantly, Nakano sequence spaces;
see Remark~\ref{REMARK: compact multipliers MO} and Theorem~\ref{PROPOSITION: Compact multipliers between Nakano})
or Lorentz and Marcinkiewicz spaces (see Theorem~\ref{Prop: computing multipliers from m and to lambda}).
This can be also thought of as a tutorial to the arithmetic of K{\" o}the spaces having their roots in \cite{KLM12}, \cite{KLM14}, \cite{KLM19} and \cite{Sch10}
(see Lemma~\ref{LEMMA: a'la KLM12/14}), and as a reincarnation of some Kolwicz's ideas about symmetrization construction from \cite{Kol16} (see Lemma~\ref{LEMMA: a'la Kolwicz OC}).

All sections are sprinkled with examples and open problems which we collectively place in Section~\ref{SECTION: Open ends}.

Finally, in Appendix~\ref{Appendix B} we present an example (Example~\ref{PROPOSITION: Appendix A - counterexample factorization})
of a certain Orlicz space $\ell_{\widetilde{M}}$ with the property that
\begin{equation*}
	\ell_{\widetilde{M}} \odot M(\ell_{\widetilde{M}},\ell_2) \neq \ell_2,
\end{equation*}
that is, $\ell_{\widetilde{M}}$ does not factorize through a Hilbert space $\ell_2$, but $M(\ell_{\widetilde{M}}, \ell_2) \neq \ell_{\infty}$.
To the best of our knowledge, this is the first example of such a space.

\section{{\bf Statements and declarations}}

\subsection{Acknowledgments}
We are very grateful to Lech Maligranda for careful reading of an early draft for our work and all his helpful advise,
and to Pawe{\l} Kolwicz for all the hints that led us back on the right track.
We thank Anton Schep for drawing our attention to \cite{PS88}.
Moreover, both authors would like to express their thanks to Damian Kubiak for sharing his paper \cite{Kub14} with them.
We also thank Karol Le{\' s}nik for some useful discussions.
Last but not least, we feel obligated to our dear friends Przemys{\l}aw Trela and Mariusz {\. Z}yluk for some, as usual,
interesting comments about our work.
We also apologize for the long delay in preparing of the final version of this work.

\subsection{Funding}
Our research was carried out at the Pozna{\' n} University of Technology as a part of the Rector's grant.

\subsection{Conflict of interest}
The authors declare that they have no known competing financial interest or personal relationship that could have appeared
to influence the work reported in this paper.

\subsection{Data availibility}
All data generated or analysed during this study are included in this article.

\section{{\bf Toolbox}} \label{SECTION: Toolbox}

Our notation is rather standard and is, more or less, a mix of notations used in Bennett and Sharpley \cite{BS88},
Lindenstrauss and Tzafriri \cite{LT77}, \cite{LT79} and Pietsch \cite{Pie80}.
Below, for the reader's convenience, we provide in detail the most important definitions and the necessary facts.
Other more specific terminology will be introduced in context when needed.

\begin{notation}[Vinogradov's notation]
	Given two quantities, say $A$ and $B$, depending (maybe) on certain parameters, we will write $A \approx B$ understanding that there exist
	absolute constants (that is, independent of all involved parameters), say $c, C > 0$, such that $cA \leqslant B \leqslant CA$.
\end{notation}

\begin{notation}[Definition]
	Defined terms are always distinguished from the rest of the text by {\bf bold} font (also when, instead of providing a full definition,
	we only refer to the literature). Sometimes (but there is no rule here), we specify them additionally in the form of {\bf Definition}.
\end{notation}

\begin{notation}[Digression]
	Some additional comments not necessary for the general understanding of this work have been placed in specific
	parts of the text titled {\bf Digression}.
\end{notation}

\begin{notation}
	By tradition, we will use \enquote{halmos} $\blacksquare$ at the end of each proof. Anyway, we find it useful to also use the symbol
	{\large $\blacktriangle$} to clearly indicate the end of each Remark, Example and Digression.
\end{notation}

\begin{notation}
	Let, as usual, $\mathbb{N} = \{1,2, ...\}$, $\mathbb{R}$ and $\mathbb{C}$ denote the set of natural numbers,
	reals and complex numbers, respectively. Moreover, let $\mathbb{N}_0 = \{0, 1, 2, ...\}$.
\end{notation}

\begin{notation}
	Depending on the situation, we will write $x = \{ x_n \}_{n=1}^{\infty}$ and $x = \sum_{n=1}^{\infty} x_n e_n$.
	Both notations mean exactly the same. Similarly, sometimes we will write $x = \sum_{i=n}^{\infty} x_n e_n$ and
	sometimes $x\chi_{\{n,n+1,...\}}$. Here, $\chi_A$ is the characteristic function of a set $A$.
\end{notation}

\begin{notation}
	Sometimes, in various contexts, we will use the following abbreviation $f(\infty) \coloneqq \lim_{t \rightarrow \infty} f(t)$.
\end{notation}

\subsection{Recollections on function spaces}
Let $(\Omega, \Sigma, \mu)$ be a complete\footnote{This means that any subset of a set of measure zero is measurable.} and
$\sigma$-finite measure space. Let, moreover, $L_0(\Omega,\Sigma,\mu)$ (briefly, $L_0(\Omega)$, or even $L_0$, as long as it does
not lead to confusion) be the set consisting of all equivalence classes, modulo equality almost everywhere, of locally
integrable\footnote{We will say, that a function $f$ is {\bf locally integrable} if it is measurable and for every $A \in \Sigma$
with $\mu(A) < \infty$ we have that $\int_{A} \abs{f(\omega)} d\mu(\omega) < \infty$.} real or complex valued functions on $\Sigma$.
A Banach space $X$ is called a {\bf K{\" o}the space} (also a {\bf Banach ideal space} or a {\bf Banach function space}) if the following two conditions hold:
\begin{itemize}
	\item[$\bullet$] $X$ is a linear subspace of $L_0(\Omega,\Sigma,\mu)$;
	\item[$\bullet$] if $\abs{f(\omega)} \leqslant \abs{g(\omega)}$ almost everywhere on $\Omega$, with $f$ measurable
	and $g \in X$, then $f \in X$ and $\norm{f}_X \leqslant \norm{g}_X$
\end{itemize}
(see \cite[pp.~1--3]{BS88}, \cite[Definition~1.1, p.~40]{KPS82}, \cite[Definition~1.b.17, p.~28]{LT79} or \cite[p.~161]{Ma89}).
However, unless we write otherwise, we will also assume that
\begin{itemize}
	\item[$\bullet$] for every $A \in \Sigma$ with $\mu(A) < \infty$ the characteristic function $\chi_{A}$ belongs to $X$.
\end{itemize}

\begin{remark}[K{\" o}the function and sequence spaces]
	Hereafter, whenever we use the term {\bf K{\" o}the sequence space} we mean a K{\" o}the space over a purely atomic measure space, while, the name
	{\bf K{\" o}the function space} is reserved for a K{\" o}the space over a non-atomic measure space (the choice of vocabulary
	seems, perhaps, a bit misplaced here, because formally every sequence is also a function, but this nomenclature seems to be
	well-rooted in the literature).
	It follows from Maharam's theorem that every K{\" o}the space is, roughly speaking, a mixture
	of a certain K{\" o}the sequence and K{\" o}the function space.
	\demo
\end{remark}

\begin{notation}[Continuous inclusions]
	Due to the closed graph theorem, a formal inclusion of two Banach ideal spaces, say $X$ and $Y$, is a bounded operator and to clearly
	emphasize this fact we will write $X \hookrightarrow Y$ instead of just $X \subset Y$. Moreover, in some situations, we will use the
	symbol $X \overset{C}{\hookrightarrow} Y$ to indicate that the norm of the inclusion $X \subset Y$ is not greater than $C > 0$.
\end{notation}

\begin{notation}
	The symbols $X = Y$ and $X \equiv Y$ mean that the spaces $X$ and $Y$ are the same as sets and their norms are equivalent, respectively, equal.
\end{notation}

Given a K{\" o}the space $X$ on $(\Omega,\Sigma,\mu)$ the {\bf K{\" o}the dual space $X^{\times}$}
(or the {\bf associated space}) of the space $X$ is defined by the norm .
Recall that $X \overset{1}{\hookrightarrow} X^{\times \times}$ and we have the equality $X \equiv X^{\times \times}$ if, and only if, the norm in $X$
has the so-called {\bf Fatou property}, that is, the closed unit ball $\text{Ball}_X$ is also closed in $L_0(\Omega)$
with respect to the topology of local convergence in measure.

\begin{notation}[Duality bracket]
	Sometimes it will be convenient to use the symbol $\langle \cdot, \cdot \rangle$ for the natural pairing between the K{\" o}the space $X$ and its
	topological dual $X^*$ or its K{\" o}the dual $X^{\times}$.
\end{notation}

Moreover, for a K{\" o}the space $X$ on $(\Omega,\Sigma,\mu)$ and a positive and locally integrable function $w \colon \Omega \rightarrow \mathbb{R}$
(briefly, a {\bf weight} on $\Omega$) by a {\bf weighted K{\" o}the space $X(w)$} we will understand a vector space
\begin{equation*}
	X(w) \coloneqq \{f \in L_0(\Omega) \colon fw \in X\}
\end{equation*}
together with the norm $\norm{f}_{X(w)} \coloneqq \norm{fw}_X$. There should be no doubt that the space $X(w)$ is a K{\" o}the space itself.
It is also clear that $X(w)^{\times} \equiv X^{\times}(1/w)$.

We will say that a function, say $f$, in a K{\" o}the space $X$ is {\bf order continuous} (or it has an {\bf order continuous norm})
if for any sequence $\{f_n\}_{n=1}^\infty$ in $X$ with $0 \leqslant f_n \leqslant \abs{f}$ and $f_n \downarrow 0$ point-wise we have
that $\norm{f_n}_X \downarrow 0$.
By $X_o$ we denote the subspace of all order continuous elements of $X$. It turns out that $X_o$ is an {\bf order ideal} in $X$, that is to say,
$X_o$ is a closed linear subspace of $X$ with the ideal property\footnote{This means that if $f \in X_o$, $g \in X$ and $\abs{g(\omega)} \leqslant \abs{f(\omega)}$,
then also $g \in X_o$.} (see \cite[Theorem~3.8, p.~16]{BS88}).
Whenever $X = X_o$, that is, every element of $X$ is order continuous, the space $X$ is said to be {\bf order continuous} (pedantically, {\bf $\sigma$-order continuous}).
Notice that this fact is equivalent to the separability of the space $X$ (see \cite[Theorem~5.5, p.~27]{BS88}).

In general, we refer to \cite{Wn99} for much more information about the order continuity property. However, let us add that
\begin{itemize}
	\item[$\bullet$] The dual space $X^*$ of a K{\" o}the space $X$ can be naturally identified with the the K{\" o}the dual space $X^{\times}$ if, and only if,
		the space $X$ is order continuous (see \cite[Corollary~4.3, p.~23]{BS88});
	\item[$\bullet$] A K{\" o}the space $X$ with the Fatou property is reflexive if, and only if, the space $X$ itself and its K{\" o}the dual $X^{\times}$
		are both order continuous (see \cite[Corollary~4.4, p.~23]{BS88}).
\end{itemize}

\begin{remark}
	In practice, it is also useful to observe that a function $f$ in a K{\" o}the space $X$ is order continuous
	if, and only if, $\lim_{n \rightarrow \infty} \norm{f\chi_{A_n}}_X = 0$ for every decreasing sequence of measurable
	sets $\{A_n\}_{n=1}^{\infty}$ with the intersection of measure zero (see \cite[Proposition~3.5, p.~15]{BS88}).
	Consequently, in the particular case when $X$ is a K{\" o}the sequence space, a function $x = \{x_n\}_{n=1}^{\infty}$
	from $X$ is order continuous if, and only if, $\lim_{n \rightarrow \infty} \norm{ \sum_{i=n}^{\infty} x_i e_i }_X = 0$.
	\demo
\end{remark}

Let $(\Omega, \Sigma, \mu)$ be a {\bf resonant measure space}, that is, either non-atomic or purely atomic with all atoms having equal measure,
complete and $\sigma$-finite measure space (see, e.g., \cite[Theorem~2.7, p.~51]{BS88}).
A K{\" o}the space $X$ of measurable functions on $\Omega$ is called {\bf rearrangement invariant} (or {\bf symmetric}; sometimes we will
also write briefly {\bf r.i.}) if it satisfies the following condition:
\begin{itemize}
	\item[$\bullet$] if $f$ and $g$ are equi-measurable and $g \in X$, then $f \in X$ and $\norm{f}_X = \norm{g}_X$;
\end{itemize}
cf. \cite[Definition~4.1, p.~59]{BS88}, \cite[Definition~2.a.1, pp.~117--118]{LT79} or \cite[p.~90]{KPS82}. Equivalently,
\begin{itemize}
	\item[$\bullet$] if $f^*(t) \leqslant g^*(t)$ for almost all $t > 0$ and $g \in X$, then $f \in X$ and $\norm{f}_X \leqslant \norm{g}_X$,
\end{itemize}
where $f^*$ denote the {\bf non-decreasing rearrangement} of $f$ (see \cite[p.~90]{KPS82}; cf. \cite[Definition~1.5, p.~39]{BS88} and \cite[p.~117]{LT79}).
Every K{\" o}the space is a Banach lattice in a sense of Meyer-Nieberg \cite{MN91}. Moreover, every order continuous Banach lattice with the weak
unit is order isometric to a K{\" o}the space (see \cite[Theorem~1.b.14, p.~25]{LT79} and references given there).
It follows from the {\it Luxemburg representation theorem} (see \cite[Theorem~4.10, p.~62]{BS88}) that when considering rearrangement invariant
spaces it is enough to limit oneself to the case of one of the three separable measure spaces:
\begin{itemize}
	\item[$\bullet$] the natural numbers $\mathbb{N} = \{1,2,...\}$ with the counting measure $\#$,
	\item[$\bullet$] the unit interval $(0,1) $ with the usual Lebesgue measure $m$,
	\item[$\bullet$] the semi-axis $(0,\infty)$ with the usual Lebesgue measure $m$;
\end{itemize}
see \cite[pp.~114--115]{LT79} (note, however, that in \cite{KPS82} one consider only r.i. spaces defined on the semi-axis and in \cite{BS88}
the Fatou property is essentially included in their definition; see~\cite[p.~90]{KPS82} and \cite[Definition~1.1, p.~2]{BS88}).

\begin{notation}
	Again, we take here the convention that whenever a rearrangement space $X$ is defined over $(\mathbb{N}, 2^{\mathbb{N}}, \#)$ we will call it
	a {\bf rearrangement invariant sequence space}, but in the other two cases, we will use the name {\bf rearrangement invariant function space}.
\end{notation}

\begin{remark}
	The class of rearrangement invariant spaces includes many important and classical examples of spaces, like
		$\bullet$ Lebesgue--Riesz spaces $L_p$;
		$\bullet$ Orlicz spaces $L_M$ (see \cite[\S4.8]{BS88}, \cite[\S3]{Mal83});
		$\bullet$ Lorentz spaces $\Lambda_{\varphi}$ (see \cite[Definition~5.12, pp.~71--72]{BS88}, \cite[p.~107]{KPS82});
		$\bullet$ Lorentz spaces $L_{p,q}$ (see \cite[Definition~2.b.8, p.~142]{LT79}; cf. also Section~\ref{SECTION: ESS NORMS});
		$\bullet$ Marcinkiewicz spaces $M_{\varphi}$ (see \cite[Definition~5.7, p.~69]{BS88}, \cite[p.~114]{KPS82});
		$\bullet$ Lorentz--Zygmund spaces $L_{p,q}(\log L)^{\alpha}$ (see \cite[Definition~6.13, p.~253]{BS88});
		$\bullet$ Grand Lebesgue spaces $L_{p)}$ and, their duals, small Lebesgue spaces $L_{(p}$;
		$\bullet$ Morrey--Campanato spaces $L_{p,\lambda}$;
		and
		$\bullet$ some combined forms of these spaces, like, for example, Orlicz--Lorentz spaces $\Lambda_{M,w}$, Morrey--Orlicz spaces $L_{M,\lambda}$ and so on.
	\demo
\end{remark}

For general properties of K{\" o}the and rearrangement invariant spaces we refer to Bennett and Sharpley \cite{BS88},
Kre{\u \i}n, Petunin and Semenov \cite{KPS82}, Lindenstrauss and Tzafriri \cite{LT77} and \cite{LT79}, Luxemburg \cite{Lux67}, Maligranda \cite{Ma89}
and Zaanen \cite[Chapter~15]{Zaa67}.
More information on rearrangement invariant spaces defined {\it not} necessary on a $\sigma$-finite measure space can be found, for example, in Folland \cite{Fol99}.
For much more information about Banach lattices and, even more generally, Riesz spaces see, for example, Aliprantis and Burkinshaw \cite{AB85},
Meyer-Nieberg \cite{MN91} and Zaanen \cite{Za83}.

\subsection{Boyd indices}

The {\bf lower Boyd index} $\alpha_X$ and {\bf upper Boyd index} $\beta_X$ of a rearrangement invariant function space $X$
is defined by\footnote{With the convention that, if $\norm{D_s}_{X \rightarrow X} = 1$
for some $s > 1$, we put $\alpha_X = \infty$, and, similarly, if $\norm{D_s}_{X \rightarrow X} = 1$
for all $0 < s < 1$, we put $\beta_X = \infty$.}
\begin{equation*}
	\alpha_X \coloneqq \lim\limits_{s \rightarrow \infty} \frac{\log s}{\log \norm{D_s}_{X \rightarrow X}}
\end{equation*}
and, respectively,
\begin{equation*}
	\beta_X \coloneqq \lim\limits_{s \rightarrow 0^{+}} \frac{\log s}{\log \norm{D_s}_{X \rightarrow X}}.
\end{equation*}
Here, $D_s$ with $s > 0$ stands for the {\bf dilation operator} defined for a measurable function $f$ on $(0,\infty)$
as $(D_s f)(t) \coloneqq f(t/s)$, while for a measurable function $f$ on $(0,1)$ as
	\[
	(D_s f)(t) \coloneqq \left\{ 
	\begin{array}{ccc}
		f(t/s)  & \text{ for } & t \leqslant \min\{1,s\} \\ 
		0  & \text{ for } & s \leqslant t < 1
	\end{array}
	\right.
	\]
Dilation operators $D_s$ are bounded on any rearrangement invariant function space $X$ with $\norm{D_s}_{X \rightarrow X} \leqslant \max\{1,s\}$
(see \cite[p.~148]{BS88}, \cite[pp.~96--98]{KPS82} and \cite[pp.~130--131]{LT79}).
Moreover, it follows from the definition, that $1 \leqslant \alpha_X \leqslant \beta_X \leqslant \infty$.

Following Lindenstrauss and Tzafriri \cite{LT79} (see \cite[p.~131]{LT79}; see also \cite[p.~165]{KPS82}),
the Boyd indices can be defined also for rearrangement invariant sequence spaces.
The difference here is that the dilation operators $D_m$ and $D_{1/m}$ with $m \in \mathbb{N}$ are defined by
\begin{equation*}
	D_m(x) \coloneqq \left\{ x_{[\frac{m-1+n}{m}]} \right\}_{n=1}^\infty = \{ \underbrace{x_1, x_1, ..., x_1}_{m}, \underbrace{x_2, x_2, ...,x_2}_{m}, x_3, ... \}
\end{equation*}
and
\begin{equation*}
	D_{1/m}(x) \coloneqq \left\{ \frac{1}{m} \sum_{k=(n-1)m+1}^{nm} x_k \right\}_{n=1}^\infty
	= \left\{ \frac{1}{m}\sum_{k=1}^m x_k, \frac{1}{m}\sum_{k=m+1}^{2m} x_k, ..., \frac{1}{m}\sum_{(n-1)m+1}^{nm}x_k, ... \right\}.
\end{equation*}

The {\bf lower dilation exponent $\gamma_\varphi$} and the {\bf upper dilation exponent $\delta_\varphi$} of an arbitrary positive function
$\varphi$ on $(0,\infty)$ is defined as
\begin{equation}
	\gamma_\varphi \coloneqq \lim\limits_{s \rightarrow \infty} \frac{\log s}{ \log \thickbar{\varphi}(s) }
\end{equation}
and, respectively,
\begin{equation}
	\delta_\varphi \coloneqq \lim\limits_{s \rightarrow 0^{+}} \frac{\log s}{ \log \thickbar{\varphi}(s) }.
\end{equation}
Here, $\thickbar{\varphi}(s) \coloneqq \sup_{t > 0} \varphi(st) / \varphi(t)$ for $s > 0$.
For a quasi-concave function\footnote{A non-negative function, say $\varphi$, defined on the half-line $(0,\infty)$ is said to be {\bf quasi-concave}
if $\varphi$ is non-decreasing, vanishes only at zero and $t \rightsquigarrow \varphi(t) / t$ is decreasing. Clearly, every non-negative concave function
on $(0,\infty)$ that vanishes only at the origin is quasi-concave, but not {\it vice versa} (see \cite[p.~69]{BS88}).} $\varphi$ on $(0,\infty)$,
we have $1 \leqslant \gamma_\varphi \leqslant \delta_\varphi \leqslant \infty$ (see \cite[Theorem~4.12, p.~107]{KPS82} and \cite[pp.~81--95]{Ma89}).

\subsection{Essential norms}
Denote by $\mathscr{L}$ the class of all operators, which we {\it always} assumed to be linear and bounded, between all Banach spaces.
For two Banach spaces, say $X$ and $Y$, let $\mathscr{L}(X,Y)$ be the set of all operators from $X$ into $Y$. More generally, for
$\mathscr{I} \subset \mathscr{L}$ any two Banach spaces $X$ and $Y$ we set $\mathscr{I}(X,Y) \coloneqq \mathscr{I} \cap \mathscr{L}(X,Y)$.
If the need arises, we will shorten the symbols $\mathscr{L}(X,X)$ and $\mathscr{I}(X,X)$ to $\mathscr{L}(X)$ and, respectively, $\mathscr{I}(X)$.
Following Pietsch \cite{Pie80} (see also \cite{DJP01}) we recall that a class $\mathscr{I}$ is called a {\bf operator ideal} (in the sense of Pietsch)
if the following conditions are satisfied:
\begin{itemize}
	\item[$\bullet$] for any one-dimensional Banach space $X$ the identity operator $\text{id}_X \colon X \rightarrow X$ belongs to $\mathscr{I}$,
	\item[$\bullet$] for all Banach spaces, say $X$ and $Y$, the set $\mathscr{I}(X,Y)$ is a linear subspace of $\mathscr{L}(X,Y)$,
	\item[$\bullet$] for any four Banach spaces, say $W$, $X$, $Y$ and $Z$, the composite operator $UTS$ belongs to $\mathscr{I}(W,Z)$
	whenever $S$ belongs to $\mathscr{L}(Y,Z)$, $T$ to $\mathscr{I}(X,Y)$, and $U$ to $\mathscr{L}(W,X)$.
\end{itemize}
An operator ideal $\mathscr{I}$ is said to be a {\bf closed operator ideal} provided $\mathscr{I}(X,Y)$ is closed in $\mathscr{L}(X,Y)$ for any two Banach
spaces $X$ and $Y$. Some very well-known classes of operator ideals, like compact operators, weakly compact operators, strictly singular operators
or completely continuous operators are examples of closed operator ideals.

For any closed operator ideal $\mathscr{I}$ and any operator, say $T$, acting between two Banach spaces $X$ and $Y$ by the {\bf essential
norm $\norm{ T \colon X \rightarrow Y }_{\mathscr{I}}$} of $T$ with respect to an operator ideal $\mathscr{I}$ we mean
\begin{equation}\label{def: general essnorm}
	\norm{T \colon X \rightarrow Y}_{\mathscr{I}} \coloneqq \norm{T \colon X \rightarrow Y}_{{\mathscr L}(X,Y)/\mathscr{I}(X,Y)}
		= \inf\{\norm{T - J}_{X \rightarrow Y} \colon J \in \mathscr{I}(X,Y)\}
\end{equation}
(cf. with the definitions given in \cite[p.~96]{Tyl95} and \cite[Definition~0.3]{Lef09}). The last expression on the right in (\ref{def: general essnorm})
is, of course, nothing else but a natural quotient norm on the space ${\mathscr L}(X,Y)/\mathscr{I}(X,Y)$, that is to say,
\begin{equation*}
	\norm{T \colon X \rightarrow Y}_{{\mathscr L}(X,Y)/\mathscr{I}(X,Y)} = \text{dist}_{\mathscr{L}(X,Y)}(T \colon X \rightarrow Y,\mathscr{I}(X,Y)).
\end{equation*}
The above definition, in its two most common versions, looks as follows
\begin{itemize}
	\item[$\bullet$] the quotient algebra ${\mathscr L}(X,Y)/{\mathscr K}(X,Y)$, where ${\mathscr K}(X,Y)$ denote the ideal of compact operators from $X$ to $Y$,
		is called the {\bf Calkin algebra} and the corresponding quotient norm $\norm{T \colon X \rightarrow Y}_{{\mathscr L}(X,Y)/\mathscr{K}(X,Y)}$
		(or, a bit more concisely, $\norm{T \colon X \rightarrow Y}_{ess}$) is called the {\bf essential norm} of $T$;
	\item[$\bullet$] alike, the quotient algebra ${\mathscr L}(X,Y)/{\mathscr W}(X,Y)$, where ${\mathscr W}(X,Y)$ denote the ideal of weakly compact operators
		acting between $X$ and $Y$, is called the {\bf weak Calkin algebra} and the corresponding quotient norm $\norm{T \colon X \rightarrow Y}_{{\mathscr L}(X,Y)/\mathscr{W}(X,Y)}$
		(briefly, just $\norm{T \colon X \rightarrow Y}_w$) is called the {\bf weak essential norm} of $T$.
\end{itemize}
Clearly, an operator $T$ acting between $X$ and $Y$ belongs to the closed operator ideal $\mathscr{I}(X,Y)$ if, and only if,
\begin{equation*}
	\norm{T \colon X \rightarrow Y}_{{\mathscr L}(X,Y)/\mathscr{I}(X,Y)} = 0.
\end{equation*}

The concept of the essential norm has in general much more to offer
(see, for example, \cite{AT87}, \cite{AS83}, \cite{BV99}, \cite{AJS80}, \cite{Dil87}, \cite{ES05}, \cite{LM08}, \cite{LMN22},
\cite{LS71}, \cite{PS88}, \cite{ST94}, \cite{Sch89} and \cite{Tyl95}).

\subsection{Pointwise multipliers}

Let $X$ and $Y$ be two K{\" o}the spaces defined on the same $\sigma$-finite measure space $(\Omega,\Sigma,\mu)$.
The space of {\bf pointwise multipliers $M(X,Y)$} is defined as a vector space
\begin{equation*}
	M(X,Y) \coloneqq \{f \in L_0(\Omega) \colon fg \in Y \text{ for all } g \in X\}
\end{equation*}
equipped with the natural operator norm $\norm{f}_{M(X,Y)} \coloneqq \sup_{\norm{g}_X = 1} \norm{fg}_Y$.

\begin{scholium}
	Note that the space $M(X,L_1)$ is nothing else but the K{\" o}the dual space $X^{\times}$ of the space $X$, so we can naturally
	think of the construction $M(X,Y)$ as a generalized K{\" o}the dual of $X$ (cf. \cite{CDSP08} and \cite{MP89}). Moreover,
	looking from a different perspective, the space $M(X,Y)$ can naturally be understood as a \enquote{pointwise quotient} of the
	space $Y$ by $X$.
	\demo
\end{scholium}

Some properties and examples of this construction can be found, for example, in \cite{CS14}, \cite{CDSP08}, \cite{KLM12}, \cite{KLM14}, \cite{KLM19},
\cite{LT21}, \cite{LTM22}, \cite{MP89} and \cite{Sch10} (see also Bennett's memoir \cite[\S14]{Ben96}, Grosse-Erdmann's book \cite{GE98} and references given there).
Below, we will collect only a few of the most important from the point of view of our future needs (see \cite{AS76}, \cite{CDSP08}, \cite{KLM12}, \cite{MP89} and \cite{Sch10}).
Hereinafter, unless stated otherwise, $X$, $Y$ and $Z$ will denote three K{\" o}the spaces defined over the same measure space.
\begin{itemize}
	\item[$\bullet$] If $X \hookrightarrow Y$, then $M(Y,Z) \hookrightarrow M(X,Z)$. Moreover, if $Y \hookrightarrow Z$, then $M(X,Y) \hookrightarrow M(X,Z)$.
	\item[$\bullet$] $L_{\infty} \hookrightarrow M(X,Y)$ if, and only if, $X \hookrightarrow Y$. In particular, $M(X,X) \equiv L_{\infty}$.
	\item[$\bullet$] $M(X,Y) \hookrightarrow M(Y^{\times},X^{\times}) = M(X^{\times \times}, Y^{\times \times})$. Moreover, if $Y$ has the Fatou property,
		then $M(X,Y) = M(Y^{\times},X^{\times})$.
	\item[$\bullet$] If $Y$ has the Fatou property, then $M(X,Y)$ share this property too.
	\item[$\bullet$] If $X$ and $Y$ are two r.i. function spaces, then $M(X,Y)$ is also a r.i. function space (the same is true for r.i. sequence spaces;
		see the first part of the proof of Lemma~\ref{LEMMA: a'la KLM12/14}).
\end{itemize}

\begin{example}
	Note that for $1 \leqslant q < p < \infty$ with $1/r = 1/q - 1/p$ we have the following identifications $M(L_p,L_q) \equiv L_r$ and $M(\ell_p,\ell_q) \equiv \ell_r$,
	On the other hand, if $1 \leqslant p < q < \infty$, then $M(L_p,L_q) = \{0\}$, but $M(\ell_p,\ell_q) \equiv \ell_{\infty}$.
	\demo
\end{example}

\begin{remark}
	The space of pointwise multipliers $M(X,Y)$ and the space of all {\bf multiplication operators $M_{\lambda}$}
	(sometimes also called {\bf diagonal operators}) acting between $X$ and $Y$ are isometrically isomorphic via
	the mapping $\Upsilon \colon \lambda \rightsquigarrow \left[ M_{\lambda} \colon f \rightsquigarrow \lambda f \right]$.
	\demo
\end{remark}

\begin{notation}
	Function $\lambda$ defining the multiplication operator $M_{\lambda}$ will be called its {\bf symbol}.
\end{notation}

\subsection{Pointwise products}

Let $X$ and $Y$ be two Banach ideal spaces both defined on the same measure space $(\Omega,\Sigma,\mu)$.
The {\bf pointwise product $X \odot Y$} is defined by
\begin{equation*}
	X \odot Y \coloneqq \left\{ gh \colon g \in X \text{ and } h \in Y \right\},
\end{equation*}
and equipped with the functional
\begin{equation*}
	\norm{f}_{X \odot Y} \coloneqq \inf \bigl\{ \norm{g}_X \norm{h}_Y \colon f = gh \text{ with } g \in X \text{ and } h \in Y \bigr\}.
\end{equation*}
Note that the product $X \odot Y$ is a vector space due to the ideal property of $X$ and $Y$.

\begin{example}
	Let $1 \leqslant p,q \leqslant \infty$ with $1/r = 1/p + 1/q$. Then, it is straightforward to see, that $L_p \odot L_q \equiv L_r$.
	\demo
\end{example}

For more about the spaces of pointwise products we refer to \cite{CS14}, \cite{KLM14} and \cite{Sch10}.

\subsection{Symmetrizations} \label{SUBSEC: Symmetrization}

Let $X$ be a K{\" o}the space defined on $\sigma$-finite measure space $(\Omega,\Sigma,\mu)$.
By the {\bf symmetrization $X^{\bigstar}$} of $X$ we will understand the set
\begin{equation*}
	X^{\bigstar} \coloneqq \left\{ f \in L_0(\Omega) \colon f^{\star} \in X \right\}
\end{equation*}
together with the functional $\norm{f}_{X^{\bigstar}} \coloneqq \norm{f^{\star}}_X$.

In general, even the fact that the symmetrization $X^{\bigstar}$ is a vector space is not so obvious
(see \cite[Example~3.1]{CKMP04} for a counter-example). In fact, it was proved by Kami{\' n}ska and Raynaud \cite[Lemma~1.4]{KR09}
(and later supplemented by Kolwicz, Le{\' s}nik and Maligranda in \cite[Corollary~1]{KLM19}) that the symmetrization $X^{\bigstar}$ of
a K{\" o}the space $X$ is a vector spaces if, and only if, the dilation operator $D_2$ is bounded on the cone of non-negative and
decreasing functions from $X$. Even then, however, the functional $\norm{\bullet}_{X^{\bigstar}}$ may be only a quasi-norm\footnote{In quasi-Banach
spaces, the triangle inequality is allowed to hold approximately, that is, $\norm{f + g}_X \leqslant \Delta \left( \norm{f}_X + \norm{g}_X \right)$
for some constant $\Delta \geqslant 1$ (see, for example, \cite{Kal03} for more).}.

\begin{remark}
	Many classical rearrangement invariant spaces, like Lorentz spaces $\Lambda_{\varphi}$ and $L_{p,q}$, Marcinkiewicz spaces $M_{\varphi}$
	and Orlicz--Lorentz spaces $\Lambda_{M,w}$ are particular cases of this construction, namely,
	\begin{equation*}
		\Lambda_{\varphi} \equiv \left[ L_1(\varphi') \right]^{\bigstar}, \quad L_{p,q} \equiv \left[ L_q\left( t^{\frac{1}{p} - \frac{1}{q}} \right) \right]^{\bigstar},
		\quad M_{\varphi} \equiv \left[ L_{\infty}(\varphi) \right]^{\bigstar} \quad \text{and} \quad \Lambda_{M,w} \equiv L_{\Phi}^{\bigstar},
	\end{equation*}
	where $L_{\Phi}$ is a Musielak--Orlicz space generated by the Musielak--Orlicz function $\Phi(u,t) = M(u)w(t)$ and $M$ is a Young function.
	\demo
\end{remark}

This construction enjoys some interest nowadays (see \cite{Kol16}, \cite{KLM19}, \cite{KR09} and references given there).

\begin{scholium}
	For us, as we will see later, its usefulness stems primarily from the fact that it often allows us to reduce some rather difficult
	questions about the structure of rearrangement invariant spaces to simpler questions about the structure of weighted $L_p$-spaces;
	for example, it takes little skill to notice that
	\begin{equation*}
		M(L_{\infty}(w), F) = F\left( 1/w \right),
	\end{equation*}
	where $F$ is a rearrangment invariant space. At the same time, it is not entirely clear how to identify the space
	$M(M_{\varphi},F)$, until we notice that the following equalities should be true
	\begin{equation*}
		M(M_{\varphi},F)
			= M\left( [L_{\infty}(\varphi)]^{\bigstar}, F^{\bigstar} \right)
			\overset{\textdbend}{=} \left[ M(L_{\infty}(\varphi), F) \right]^{\bigstar}
			= \left[ F\left( 1/\varphi \right) \right]^{\bigstar}.
	\end{equation*}
	The crux, and the hardest part, is to make the second equality somehow rigorous (see Section~\ref{SECTION: examples} for details).
	\demo
\end{scholium}

\subsection{Rademachers} \label{SUBSEC: Rademachers}
Usually, the {\it Rademacher system $\left\{ r_n \right\}_{n=1}^{\infty}$} is understood as the Bernoulli sequence of independent identically
and symmetrically distributed random variables on the unit interval taking the values $\pm 1$. In more familiar form, the {\bf $n^{\text{th}}$ standard Rademacher $r_n$}
with $n \in \mathbb{N}$ is just a function
\begin{equation} \label{EQ: n-th standard Rademachers}
	r_n(t) \coloneqq \sign(\sin 2^n\pi t),
\end{equation}
where $0 < t < 1$ (see \cite[Definition~6.2.1, p.~132]{AK06} and \cite[p.~24]{LT77}; Astashkin's book \cite{Ast20} seems to be an
inexhaustible source of information about Rademacher's functions).

However, for some reasons, which will become clear later, the object we really want to construct here is a family of functions that shares
the most important properties of Rademacher system, but can be defined on {\it any} $\sigma$-finite measure space.
Since on a finite measure space $(\Omega,\Sigma,\mu)$ Rademachers are nothing else but a sequence of independent random variables
each of which takes values $\pm 1$ with probability $\mu(\Omega)/2$, so the construction of Rademachers on any finite and
non-atomic measure space should present no difficulties (cf. \cite[p.~224]{JL0506}). With a little patience while gluing the appropriate
fragments together we can do the same for every $\sigma$-finite measure space. Unfortunately, handling with such a construction turned out
to be quite difficult to us. Therefore, we propose the following, easier-to-work, one.

\begin{construction}[Abstract Rademachers] \label{DEF: Abstract Rademachers}
	Let $(\Omega,\Sigma,\mu)$ be a non-atomic $\sigma$-finite measure space and let $\{\Omega_{N}\}_{N=1}^{\infty}$
	be a decomposition of $\Omega$ into countable family of pairwise disjoint sets of measure not exceeding one. Further,
	for every $N \in \mathbb{N}$, let $T_{N} \colon \Omega_{N} \rightarrow (0,\mu(\Omega_N))$ be a measure-preserving transformation
	the existence of which is guaranteed by Ryff's theorem (see \cite[Theorem~7.5, p.~82]{BS88}). Put
	\begin{equation*}
		r^{\Omega_{N}}_n(\omega) \coloneqq (r_n \circ T_{N})(\omega) \quad \text{ for } \quad \omega \in \Omega_N,
	\end{equation*}
	where, as usual, the symbol $r_n$ stands for the $n^{\text{th}}$ standard Rademacher (like in (\ref{EQ: n-th standard Rademachers}) above).
	Then, for $n \in \mathbb{N}$, by the {\bf $n^{\text{th}}$ abstract Rademacher} on $\Omega$ we will understand the function
	$r^{\Omega}_n \colon \Omega \rightarrow \{\pm 1\}$ defined as
	\begin{equation*}
		r^{\Omega}_n(\omega) \coloneqq \sum_{N=1}^{\infty} r^{\Omega_{N}}_n(\omega) \quad \text{ for } \quad \omega \in \Omega.
	\end{equation*}
	\demo
\end{construction}

With such a definition at hand, it is not difficult to prove the following, essentially technical, lemma which seems to be well-known
in its basic version (that is, when $\Omega$ is just a unit interval or a semi-axis with the Lebesgue measure; see, for example, \cite[Lemma~6.3.2, p.~143]{AK06}
and \cite[Lemma~3.2]{Kub14}).

\begin{lemma} \label{LEMMA: Rademachers}
	{\it Let $(\Omega,\Sigma,\mu)$ be a non-atomic complete and $\sigma$-finite measure space. Then for any function, say $f$,
		from $L_1(\Omega,\Sigma,\mu)$ we have}
	\begin{equation*}
		\lim\limits_{n \rightarrow \infty} \int_{\Omega} f(\omega) r^{\Omega}_n(\omega) d\mu(\omega) = 0.
	\end{equation*}
\end{lemma}
\begin{proof}
	Take $f \in L_1(\Omega,\Sigma,\mu)$ and let $\{\Omega_{N}\}_{N=1}^{\infty}$ be like in Construction~\ref{DEF: Abstract Rademachers}.
	
	First, we claim that for any $N \in \mathbb{N}$ we have 
	\begin{equation} \label{rademacherydozera} 
		\lim\limits_{n \rightarrow \infty} \int_{\Omega} f(\omega) r^{\Omega_N}_n(\omega) d\mu(\omega) = 0.
	\end{equation}
	Indeed,
	\begin{align*}
		\lim\limits_{n \rightarrow \infty} \int_{\Omega} f(\omega) r^{\Omega_N}_n(\omega) d\mu(\omega) &
		=\lim\limits_{n \rightarrow \infty} \int_{\Omega_N} f(\omega) \left( r_n \circ T_{N} \right)(\omega) d\mu(\omega) \\
		& = \lim\limits_{n \rightarrow \infty} \int_{0}^{\mu\left( \Omega_N \right)} \left( f \circ T_N^{-1} \right)(t) r_n(t) dm(t)  = 0,
	\end{align*}
	where the last equality follows from, for example, \cite[Lemma~3.2]{Kub14}. This proves the claim.
	
	Next, fix $\varepsilon>0$. Since the space $L_1(\Omega,\Sigma,\mu)$ is order continuous (see \cite[Theorem~5.5, p.~27]{BS88}),
	so there is a finite sequence of natural numbers, say $\{N_j\}_{j=1}^{M}$, such that
	\begin{equation*}
		\int_{\Omega\setminus \bigcup\limits_{j=0}^M\Omega_{N_j}}\abs{ f(\omega)} d\mu(\omega) < \frac{\varepsilon}{2}.
	\end{equation*}
	Now, using (\ref{rademacherydozera}), for every $j = 1, 2, ..., M$ we can find $n_j \in \mathbb{N}$ such that
	\begin{equation*}
		\abs{\int_{\Omega} f(\omega)r^{\Omega_{N_j}}_n(\omega) d\mu(\omega)} < \frac{\varepsilon}{2M} \quad \text{ for } \quad n \geqslant n_j.
	\end{equation*}
	Finally, for all $n \geqslant \max\{n_1,n_2,... , n_M\}$, we have
	\begin{align*}
		\abs{\int_{\Omega} f(\omega) r^{\Omega}_n(\omega) d\mu(\omega)} 
		&=\abs{\sum\limits_{j=0}^M\int_{\Omega_{N_j}} f(\omega) r^{\Omega}_n(\omega) d\mu(\omega)
			+\int_{\Omega\setminus \bigcup\limits_{j=0}^M\Omega_{N_j}} f(\omega)r^{\Omega}_n(\omega) d\mu(\omega)}\\
		&\leqslant\sum\limits_{j=0}^M\abs{\int_{\Omega_{N_j}} f(\omega) r^{\Omega_{N_j}}_n(\omega) d\mu(\omega)}
		+ \int_{\Omega\setminus \bigcup\limits_{j=0}^M\Omega_{N_j}}\abs{ f(\omega)} d\mu(\omega)
		< \varepsilon.
	\end{align*}
	In view of $\varepsilon$'s arbitrariness this finishes the proof.
\end{proof}

\section{{\bf Main results}} \label{SECTION: ESS NORMS}

This section is a main part of our work. Since the title of this paragraph does not leave much room for guesswork,
so let us stop at the following remark, which we will use repeatedly without making any explicit reference to it.
In view of
\begin{equation*}
	M_{\lambda} = M_{e^{i\varphi} \abs{\lambda}} = M_{e^{i\varphi}}M_{\abs{\lambda}},
\end{equation*}
the multiplication operator $M_{\lambda}$ belongs to some operator ideal, say $\mathscr{I}$, if, and only if, the same can be said about the
multiplication operator $M_{\abs{\lambda}}$.

\subsection{Essential norms} For the sake of completeness, let us remind that an operator acting from one Banach space to another is
called {\bf compact} provided it carries bounded sets to relatively compact one.

What we want to do now is to calculate the essential norm of the multiplication operator $M_{\lambda}$ which acts between two K{\" o}the function spaces $X$ and $Y$.
Roughly speaking, we will show that $M_{\lambda} \colon X \rightarrow Y$ is as far from being compact as it can be.

\begin{theorem}[Essential norm of multiplication operators between K{\" o}the function spaces] \label{THM: essnorm multipliers function}
	{\it Let $X$ and $Y$ be two K{\" o}the function spaces with the Fatou property both defined on the same $\sigma$-finite
		measure space $(\Omega,\Sigma,\mu)$. Suppose that either the space $X$ is separable or the space $Y$ is reflexive.
		Then
		\begin{equation*}
			\norm{M_{\lambda} \colon X \rightarrow Y}_{ess} = \norm{\lambda}_{M(X,Y)} = \norm{M_{\lambda} \colon X \rightarrow Y}.
		\end{equation*}
		In particular, there are no non-trivial compact multiplication operators acting from $X$ into $Y$.}
\end{theorem}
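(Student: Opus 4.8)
The plan is to read the displayed chain of equalities from right to left. The rightmost identity $\norm{M_{\lambda} \colon X \rightarrow Y} = \norm{\lambda}_{M(X,Y)}$ is nothing but the definition of the isometry $\Upsilon$, and the inequality $\norm{M_{\lambda}}_{ess} \leqslant \norm{M_{\lambda}}$ is automatic because the zero operator is compact. Hence all the content lies in the reverse estimate $\norm{M_{\lambda}}_{ess} \geqslant \norm{M_{\lambda}}$, i.e. in showing that no compact perturbation can lower the norm of $M_{\lambda}$. The guiding idea is to manufacture, from a near-optimal unit vector, a bounded sequence that is weakly null (so that every compact operator annihilates it in the limit) yet on which $M_{\lambda}$ retains its full norm. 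The abstract Rademachers of Definition~\ref{DEF: Abstract Rademachers} are tailor-made for this, since $\abs{r^{\Omega}_n} \equiv 1$ forces the K\"othe norms of $f$ and $f r^{\Omega}_n$ to agree.

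First I would treat the case in which $X$ is order continuous. Fix $\varepsilon > 0$ and choose $f \in X$ with $\norm{f}_X \leqslant 1$ and $\norm{\lambda f}_Y > \norm{\lambda}_{M(X,Y)} - \varepsilon$, and set $f_n \coloneqq f r^{\Omega}_n$. By the ideal property $\norm{f_n}_X = \norm{f}_X \leqslant 1$ and $\norm{M_{\lambda} f_n}_Y = \norm{\lambda f r^{\Omega}_n}_Y = \norm{\lambda f}_Y$. Since $X$ is order continuous, its dual is the K\"othe dual, $X^* \equiv X^{\times}$; hence for every $g \in X^*$ the product $fg$ lies in $L_1(\Omega)$ and, by Lemma~$\mathscr{R}$, $\langle f_n, g \rangle = \int_{\Omega} (fg) r^{\Omega}_n \, d\mu \rightarrow 0$. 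Thus $f_n \rightarrow 0$ weakly in $X$, whence $\norm{K f_n}_Y \rightarrow 0$ for every compact $K \colon X \rightarrow Y$. Using $\norm{f_n}_X \leqslant 1$ we obtain, for any such $K$,
\[
  \norm{M_{\lambda} - K}_{X \rightarrow Y} \geqslant \limsup_{n \rightarrow \infty} \bigl( \norm{M_{\lambda} f_n}_Y - \norm{K f_n}_Y \bigr) = \norm{\lambda f}_Y > \norm{\lambda}_{M(X,Y)} - \varepsilon,
\]
and letting $\varepsilon \downarrow 0$ and taking the infimum over $K$ gives $\norm{M_{\lambda}}_{ess} \geqslant \norm{\lambda}_{M(X,Y)}$, as desired.

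When $Y$ is reflexive the space $X$ itself need not be order continuous (think of $X = L_{\infty}$), so $\{f_n\}$ need not be weakly null in $X$ and the direct argument breaks down; instead I would pass to the adjoint. Reflexivity of $Y$ makes both $Y$ and $Y^{\times}$ order continuous and identifies $Y^* \equiv Y^{\times}$, so that $M_{\lambda}^* \colon Y^{\times} \rightarrow X^*$ is again multiplication by $\lambda$, with range inside $X^{\times}$ (on which the $X^*$-norm restricts to the K\"othe norm). Now I repeat the argument of the previous paragraph on $M_{\lambda}^*$: pick a near-optimal $h \in Y^{\times}$, put $h_n \coloneqq h r^{\Omega}_n$, and note that $h_n \rightarrow 0$ weakly in $Y^{\times}$ by testing against $(Y^{\times})^* \equiv Y^{\times \times}$ through Lemma~$\mathscr{R}$ (here $h\phi \in L_1$ for $\phi \in Y^{\times \times}$). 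For any compact $K \colon X \rightarrow Y$ the operator $K^*$ is compact, $K^* h_n \rightarrow 0$, and $\norm{M_{\lambda}^* h_n}_{X^*} = \norm{\lambda h}_{X^{\times}}$; therefore $\norm{M_{\lambda} - K}_{X \rightarrow Y} = \norm{M_{\lambda}^* - K^*}_{Y^* \rightarrow X^*} \geqslant \norm{\lambda h}_{X^{\times}} > \norm{\lambda}_{M(Y^{\times},X^{\times})} - \varepsilon$. Since $Y$ has the Fatou property we have $M(Y^{\times},X^{\times}) = M(X,Y)$, so passing to the infimum over $K$ and letting $\varepsilon \downarrow 0$ yields $\norm{M_{\lambda}}_{ess} \geqslant \norm{\lambda}_{M(X,Y)} = \norm{M_{\lambda}}$.

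The main obstacle is precisely the weak nullity of the Rademacher sequence: this is where order continuity of the \emph{domain} is indispensable, since it is what collapses the topological dual onto the K\"othe dual and lets Lemma~$\mathscr{R}$ act on the genuinely $L_1$-function $fg$. Its absence is exactly why the reflexive-$Y$ case cannot be attacked head-on but must be transported to an order-continuous domain by duality, with the Fatou property of $Y$ supplying the identification $M(X,Y) = M(Y^{\times},X^{\times})$. Finally, the closing assertion is immediate: were $M_{\lambda}$ compact, we would have $\norm{\lambda}_{M(X,Y)} = \norm{M_{\lambda}}_{ess} = 0$, forcing $\lambda = 0$ almost everywhere.
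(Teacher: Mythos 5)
Your proposal is correct and takes essentially the same approach as the paper: the order-continuous case is exactly the paper's argument (near-optimal $f$, abstract Rademachers, Lemma~$\mathscr{R}$ giving weak nullity via $X^* \equiv X^{\times}$, complete continuity of compact operators), and the reflexive case is the paper's duality argument via Schauder's theorem, the identification of $M_{\lambda}^*$ with the multiplication $M_{\lambda}^{\times} \colon Y^{\times} \rightarrow X^{\times}$ (the paper's Claim~$\spadesuit$), and the Fatou-property identification $M(Y^{\times},X^{\times}) \equiv M(X,Y)$. The only cosmetic difference is that you re-run the Rademacher argument directly on the adjoint, whereas the paper invokes its first case applied to $M_{\lambda}^{\times}$, whose domain $Y^{\times}$ is order continuous.
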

\begin{proof}
	Of course, we can immediately assume that the space $M(X,Y)$ is non-trivial, because otherwise there is nothing to prove. What's more,
	the inequality
	\begin{equation*}
		\norm{M_{\lambda} \colon X \rightarrow Y}_{ess} \leqslant \norm{M_{\lambda}}_{X \rightarrow Y} = \norm{\lambda}_{M(X,Y)}
	\end{equation*}
	is true for obvious reasons, so there is noting else but to turn to the proof of a more interesting inequality.
	
	{\bf First case.}
	Let us start with the case when the space $X$ is separable. Fix $\varepsilon > 0$ and take $\lambda \in M(X,Y)$. It follows from the definition
	of the norm in the space $M(X,Y)$ that there exists $f \in \text{Ball}(X)$ with $\norm{\lambda f}_Y \geqslant \norm{\lambda}_{M(X,Y)} - \varepsilon$.
	Moreover, it is also clear that
	\begin{equation*}
		\norm{\lambda f r^{\Omega}_n}_Y = \norm{\lambda f}_Y \geqslant \norm{\lambda}_{M(X,Y)} - \varepsilon,
	\end{equation*}
	where $r^{\Omega}_n$ denote the $n^{\text{th}}$ abstract Rademacher on $\Omega$ (see Construction~\ref{DEF: Abstract Rademachers} for details).
	Now, for any compact operator, say $K$, acting from $X$ into $Y$, we have
	\begin{align*}
		\norm{M_{\lambda} - K}_{X \rightarrow Y} 
			& \geqslant \norm{(M_{\lambda} - K)(f r^{\Omega}_n)}_Y \\
			& \geqslant \abs{\norm{\lambda f r^{\Omega}_n}_Y - \norm{K(f r^{\Omega}_n)}_Y},
	\end{align*}
	We claim that the sequence $\left\{ f r^{\Omega}_n \right\}_{n=1}^{\infty}$ is weakly null. To see this, take $g \in X^{\times}$
	(due to our assumptions the space $X$ is order continuous, thus we have the equality $X^* = X^{\times}$; see, for example,
	\cite[Theorem~4.1, p.~20]{BS88}). Since $fg \in L_1(\Omega,\Sigma,\mu)$, so it is enough to apply Lemma~\ref{LEMMA: Rademachers} to conclude that 
	\begin{equation*}
		\lim\limits_{n \rightarrow \infty} \abs{ \langle f r^{\Omega}_n, g \rangle } = \lim\limits_{n \rightarrow \infty} \abs{\int_{\Omega} (f g) r^{\Omega}_n d\mu} = 0
	\end{equation*}
	and this proves our claim. Consequently, since every compact operator is {\it a fortiori} completely continuous,
	so $\lim_{n \rightarrow \infty} \norm{K(f r^{\Omega}_n)}_Y = 0$. But now, using the above inequalities, we get
	\begin{equation*}
		\norm{M_{\lambda} - K}_{X \rightarrow Y} \geqslant \norm{\lambda}_{M(X,Y)} - \varepsilon.
	\end{equation*}
	This concludes the proof in the first case.
	
	{\bf Second case.}
	Now, let us move on to the second case where we assume that the space $Y$ is reflexive. It follows from Schauder's theorem that an operator, say $T$, acting
	between Banach spaces is compact if, and only if, its adjoint $T^*$ is compact too (see, for example, \cite[A.4, p.~329]{AK06}).
	Hence, we can conclude\footnote{Note that in general there is no hope for an equality in
	(\ref{inequality: ess norm of T and adjoint T}) (see \cite[Example~1]{AJS80}; cf. \cite[p.~160]{AJS80}).} that
	\begin{equation} \label{inequality: ess norm of T and adjoint T}
		\norm{M_{\lambda} \colon X \rightarrow Y}_{ess} \geqslant \norm{M_{\lambda}^* \colon Y^* \rightarrow X^*}_{ess},
	\end{equation}
	where $M_{\lambda}^*$ is the adjoint of the multiplication operator $M_{\lambda}$.
	Next, we will need the following observation.
	
	\vspace{5pt} \noindent {\bf Claim~$\spadesuit$.}
	{\it Let $W$ and $Z$ be two K{\" o}the function space both defined on the same $\sigma$-finite measure space $(\Omega,\Sigma,\mu)$.
		Further, let $M_{\lambda} \colon W \rightarrow Z$ be the multiplication operator with the symbol $\lambda \in M(W,Z)$.
		Suppose the space $Z$ is separable. Then the following diagram commutes}
		\begin{equation*}
			\xymatrix{
				Z^* \ar[r]^{M_{\lambda}^*} & W^*\\
				Z^{\times} \ar[u]^{L} \ar[r]^{M_{\lambda}} & W^{\times} \ar[u]_{\text{id}}}
		\end{equation*}
	{\it where $L \colon Z^{\times} \rightarrow Z^*$ is the natural isometric isomorphism between the K{\" o}the dual $Z^{\times}$
		and the topological dual $Z^*$ (see below for details), and $M_{\lambda} \colon Z^{\times} \rightarrow W^{\times}$ is the K{\" o}the
		adjoint\footnote{Recall that for a bounded operator $T$ acting between two K{\" o}the spaces $X$ and $Y$ defined on possibly
		different measure spaces its {\bf K{\" o}the adjoint} (or {\bf K{\" o}the dual}) operator $T^{\times} \colon Y^{\times} \rightarrow X^{\times}$
		is defined by
		\begin{equation*}
			\langle x,T^{\times}(y^{*}) \rangle = \langle T(x),y^{*} \rangle \quad \text{ for all } \quad x \in X \quad \text{ and } \quad y^* \in Y^{\times}.
		\end{equation*}
		} of $M_{\lambda} \colon W \rightarrow Z$.}
	
	\vspace{5pt} \noindent
	{\bf Proof of Claim~$\spadesuit$.}
	It is clear that each function $g \in Z^{\times}$ induces a bounded linear functional
	\begin{equation*}
		L_g \colon f \rightsquigarrow L_g(f) \coloneqq \int_{\Omega} f g d\mu
	\end{equation*}
	on $Z$ and we can think of $L_g$ as a member of the space $Z^*$. Moreover, since the space $Z$ is order continuous, it turns out that the mapping
	$L \colon Z^{\times} \rightarrow Z^*$ given by
	\begin{equation*}
		L \colon g \rightsquigarrow \left[ f \rightsquigarrow L_g(f) \right] \quad \text{ for } \quad f \in Z
	\end{equation*}
	is an isometric isomorphism (see, for example, \cite[Theorem~4.1, p.~20]{BS88}).
	Therefore, for $f \in W$ and $g \in Z^{\times}$, we have
	\begin{align*}
		\langle f, M_{\lambda}^{*}(L(g)) \rangle
			& = \langle M_{\lambda}(f), L(g) \rangle \\
			& = \int_{\Omega} M_{\lambda}(f) g d\mu \\
			& = \int_{\Omega} f M_{\lambda}(g) d\mu \\
			& = \langle f, M_{\lambda}(g) \rangle.
	\end{align*}
	This ends the proof of Claim~$\spadesuit$.
	\hfill $\blacksquare$
	
	\vspace{5pt}
	Now, it follows from the preceding Claim~$\spadesuit$ that the adjoint operator $M_{\lambda}^* \colon Y^* \rightarrow X^*$ can be naturally identified
	with the multiplication operator $M_{\lambda} \colon Y^{\times} \rightarrow X^{\times}$. Thus, we have
	\begin{align*}
		\norm{M_{\lambda} \colon X \rightarrow Y}_{ess}
			& \geqslant \norm{M_{\lambda} \colon Y^{\times} \rightarrow X^{\times}}_{ess} \quad (\text{using Claim~$\spadesuit$ together with \eqref{inequality: ess norm of T and adjoint T}}) \\
			& = \norm{\lambda}_{M(Y^{\times},X^{\times})} \quad (\text{using the first part of this proof}) \\
			& = \norm{\lambda}_{M(X^{\times\times},Y^{\times\times})} \\
			& = \norm{\lambda}_{M(X,Y)} \quad (\text{in view of the Fatou property}).
	\end{align*}
	We are done.
\end{proof}

Since it is straightforward to see that $M(X,X) \equiv L_\infty$, so one immediate consequence of Theorem~\ref{THM: essnorm multipliers function} is the following

\begin{corollary} \label{COR: compact function X to X}
	{\it Let $X$ be a separable K{\" o}the function space with the Fatou property. Then
	\begin{equation*}
		\norm{M_{\lambda} \colon X \circlearrowleft}_{ess} = \norm{\lambda}_{L_{\infty}} = \norm{M_{\lambda} \colon X \circlearrowleft}.
	\end{equation*}
	In particular, there are no non-trivial compact multiplication operators acting from $X$ into itself.}
\end{corollary}

Let us turn our attention to the case of K{\" o}the spaces defined over purely atomic measure spaces.
Here the situation is completely different, because, any symbol which is eventually zero induces a compact multiplication operator.

\begin{theorem}[Essential norm of multiplication operators between K{\" o}the sequence spaces] \label{THM: Essential norm Kothe sequence}
	{\it Let $X$ and $Y$ be two K{\" o}the sequence spaces with the Fatou property. Suppose that either the space $X$ is reflexive
		or the space $Y$ is separable. Then
	\begin{equation*}
		\norm{M_{\lambda} \colon X \rightarrow Y}_{ess} = \lim\limits_{n \rightarrow \infty} \norm{ \sum_{i=n}^{\infty} \lambda_i e_i }_{M(X,Y)}.
	\end{equation*}
		In particular, the multiplication operator $M_{\lambda} \colon X \rightarrow Y$ is compact if, and only if, $\lambda \in M(X,Y)_o$.}
\end{theorem}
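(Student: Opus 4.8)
The plan is to establish the displayed formula as a two-sided estimate and then read off the compactness criterion for free. Write $L \coloneqq \lim_{n\rightarrow\infty}\norm{\lambda\chi_{\{n,n+1,\ldots\}}}_{M(X,Y)}$. Since the indicators $\chi_{\{n,n+1,\ldots\}}$ decrease pointwise and $M(X,Y)$ is itself a K{\"o}the (sequence) space, the quantities $\norm{\lambda\chi_{\{n,n+1,\ldots\}}}_{M(X,Y)}$ are non-increasing, so the limit exists, equals the infimum, and every term is $\geqslant L$. Granting $\norm{M_\lambda}_{ess}=L$, the \enquote{in particular} is immediate: $M_\lambda$ is compact exactly when $\norm{M_\lambda}_{ess}=0$ (because $\mathscr{K}(X,Y)$ is a closed ideal), i.e. when $L=0$, and by the tail characterization of order continuity recalled in the Toolbox this says precisely that $\lambda\in M(X,Y)_o$.

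For the upper bound I would truncate $\lambda$. The operator $M_{\lambda\chi_{\{1,\ldots,n-1\}}}$ has range inside the finite-dimensional span of $e_1,\ldots,e_{n-1}$, hence is finite rank and \emph{a fortiori} compact; therefore $\norm{M_\lambda}_{ess}\leqslant\norm{M_\lambda-M_{\lambda\chi_{\{1,\ldots,n-1\}}}}_{X\rightarrow Y}=\norm{M_{\lambda\chi_{\{n,n+1,\ldots\}}}}_{X\rightarrow Y}=\norm{\lambda\chi_{\{n,n+1,\ldots\}}}_{M(X,Y)}$, and letting $n\rightarrow\infty$ gives $\norm{M_\lambda}_{ess}\leqslant L$. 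This step is soft and uses none of the structural hypotheses.

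The heart of the matter is the reverse inequality $\norm{M_\lambda}_{ess}\geqslant L$, where the two hypotheses are treated differently. The case $Y\in(OC)$ admits a clean \emph{compression} argument that dispenses with weak limits altogether. Writing $R_m\coloneqq M_{\chi_{\{m+1,m+2,\ldots\}}}$, a norm-one multiplier on both $X$ and $Y$, for any compact $K\colon X\rightarrow Y$ one has $\norm{M_\lambda-K}_{X\rightarrow Y}\geqslant\norm{R_m(M_\lambda-K)}_{X\rightarrow Y}\geqslant\norm{R_mM_\lambda}_{X\rightarrow Y}-\norm{R_mK}_{X\rightarrow Y}$, and here $R_mM_\lambda=M_{\lambda\chi_{\{m+1,m+2,\ldots\}}}$ has norm $\geqslant L$. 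The crux is that $\norm{R_mK}_{X\rightarrow Y}\rightarrow0$: the set $\overline{K(\text{Ball}(X))}$ is norm-compact in $Y$, the functionals $y\mapsto\norm{y\chi_{\{m+1,m+2,\ldots\}}}_Y$ are $1$-Lipschitz and decrease pointwise to $0$ on $Y$ (this is exactly the order continuity of $Y$), so Dini's theorem forces uniform convergence on that compact set. Letting $m\rightarrow\infty$ yields $\norm{M_\lambda-K}_{X\rightarrow Y}\geqslant L$, hence $\norm{M_\lambda}_{ess}\geqslant L$. I expect this Dini--compression step to be the main obstacle, precisely because the naive test-vector argument below collapses when $X$ is far from order continuous (for instance $X=\ell_\infty$), where normalized tail-supported sequences need not be weakly null.

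For the case $X$ reflexive I would argue directly. For each $n$ pick $g_n\in\text{Ball}(X)$ with $\norm{\lambda\chi_{\{n,n+1,\ldots\}}g_n}_Y\geqslant L-\varepsilon$ and set $f_n\coloneqq g_n\chi_{\{n,n+1,\ldots\}}$, so that $\norm{f_n}_X\leqslant1$, $f_n$ is supported on $\{n,n+1,\ldots\}$, and $\norm{\lambda f_n}_Y\geqslant L-\varepsilon$. By reflexivity a subsequence converges weakly to some $f$; testing against the bounded coordinate functionals $\chi_{\{k\}}\in X^{\times}\hookrightarrow X^*$ gives $f(k)=\lim_n f_n(k)=0$ for every $k$, so $f=0$ and $f_n\rightharpoonup0$. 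As every compact operator is completely continuous, $\norm{Kf_n}_Y\rightarrow0$, whence $\norm{M_\lambda-K}_{X\rightarrow Y}\geqslant\limsup_n\left(\norm{\lambda f_n}_Y-\norm{Kf_n}_Y\right)\geqslant L-\varepsilon$; since $\varepsilon$ and $K$ are arbitrary, $\norm{M_\lambda}_{ess}\geqslant L$. Alternatively one may reduce this case to the previous one: the K{\"o}the adjoint $M_\lambda^{\times}\colon Y^{\times}\rightarrow X^{\times}$ is again a multiplication operator, its codomain $X^{\times}$ is order continuous because $X$ is reflexive, and $\norm{\lambda\chi_{\{n,n+1,\ldots\}}}_{M(Y^{\times},X^{\times})}=\norm{\lambda\chi_{\{n,n+1,\ldots\}}}_{M(X,Y)}$ by the Fatou property of $Y$; applying the $Y\in(OC)$ case to $M_\lambda^{\times}$ and passing through Schauder's theorem together with the identification $M_\lambda^*|_{Y^{\times}}=M_\lambda^{\times}$ recovers the same bound. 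Combining the two inequalities gives $\norm{M_\lambda}_{ess}=L$, and the compactness criterion follows as noted above.
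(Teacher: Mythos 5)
Your proposal is correct, and its skeleton (finite-rank truncation for the upper bound, a case split for the lower bound) matches the paper's proof. The upper bound and the reflexive case are essentially the paper's arguments: the paper also takes near-optimal test vectors $x^{(n)}$ supported on the tails $\{n,n+1,\ldots\}$, shows they are weakly null, and invokes complete continuity of compact operators; the only difference is how weak nullity is obtained --- the paper uses the identification $X^{*}=X^{\times}$ (valid since reflexivity plus Fatou gives order continuity) and the estimate $\abs{\langle x^{(n)},x^{*}\rangle}\leqslant\norm{x^{(n)}}_{X}\norm{x^{*}\chi_{\{n,n+1,\ldots\}}}_{X^{\times}}\rightarrow 0$, whereas you use Eberlein--\v{S}mulian plus coordinate functionals. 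Where you genuinely diverge is the case $Y\in(OC)$: the paper keeps the same test vectors, extracts from the relatively compact set $\left\{K\left(x^{(n)}\right)\right\}_{n=1}^{\infty}$ an accumulation point $y_a$ and a subsequence with $\norm{K\left(x^{(n_k)}\right)-y_a}_{Y}<\varepsilon$, and then restricts to $\supp\left(x^{(n_k)}\right)$ so that order continuity of the single element $y_a$ kills the term $\norm{y_a\chi_{\{n_k,n_k+1,\ldots\}}}_{Y}$, at the cost of several epsilon losses; your compression argument --- $\norm{M_{\lambda}-K}\geqslant\norm{R_m M_{\lambda}}-\norm{R_m K}$ with $\norm{R_m K}_{X\rightarrow Y}\rightarrow 0$ by Dini's theorem on the norm-compact set $\overline{K(\text{Ball}(X))}$ (monotonicity of $y\mapsto\norm{y\chi_{\{m+1,m+2,\ldots\}}}_{Y}$ from the ideal property, pointwise convergence to zero being exactly the order continuity of $Y$) --- dispenses with test vectors, subsequences and epsilons altogether and yields the exact inequality $\norm{M_{\lambda}-K}_{X\rightarrow Y}\geqslant L$ for every compact $K$. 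Both routes make transparent that no hypothesis on $X$ is needed in this case, and your alternative reduction of the reflexive case to the order-continuous one via the K\"othe adjoint mirrors what the paper actually does in the function-space analogue (Theorem~\ref{THM: essnorm multipliers function}). One small polish: in the reflexive case, to claim that the full sequence $f_n$ is weakly null you should run the standard sub-subsequence argument (every weakly convergent subsequence has limit $0$); alternatively, a single weakly null subsequence already suffices for your $\limsup$ estimate.
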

\begin{proof}
	In order to show the first of the two inequalities, take $\lambda = \{\lambda_n\}_{n=1}^\infty$ from $M(X,Y)$.
	Let $\text{Proj}_n \colon Y \rightarrow Y$ be the projection onto first $n$ coordinates. We have
	\begin{align*}
		\norm{M_{\lambda} \colon X \rightarrow Y}_{ess}
				& = \inf \Big\{ \norm{M_{\lambda} - K}_{X \rightarrow Y} \colon K \in {\mathscr K}(X,Y) \Big\} \\
				& \leqslant \norm{M_{\lambda} - \text{Proj}_n M_{\lambda}}_{X \rightarrow Y} \\
				& = \norm{ \sum_{i=n}^{\infty} \lambda_i e_i }_{M(X,Y)},
	\end{align*}
	where to get the inequality we have used the plain fact that finite rank operators acting between Banach spaces are always compact.
	In consequence, we conclude\footnote{It seems worth emphasizing that inequality \eqref{INEQ : <=} holds without any additional assumptions on $X$ and $Y$.}
	that
	\begin{equation} \label{INEQ : <=}
		\norm{M_{\lambda} \colon X \rightarrow Y}_{ess} \leqslant \lim_{n \rightarrow \infty} \norm{ \sum_{i=n}^{\infty} \lambda_i e_i }_{M(X,Y)}.
	\end{equation}
	
	Now, let us turn to the proof of the second inequality. We will consider both situations mentioned in the formulation of the theorem separately.
	
	{\bf First case}. Suppose that the space $X$ is reflexive. Fix $\varepsilon > 0$. Due to the definition of norm in the space $M(X,Y)$ we infer
	that for any $n \in \mathbb{N}$ we can find a sequence, say $x^{(n)} = \left\{ x^{(n)}_i \right\}_{i=1}^{\infty}$, in $\text{Ball}_X$ such that
	\begin{equation} \label{INEQ : A}
		\norm{ \sum_{i=n}^{\infty} \lambda_i x^{(n)}_i e_i }_Y \geqslant \norm{\sum_{i=n}^{\infty} \lambda_i e_i }_{M(X,Y)} - \varepsilon.
	\end{equation}
	Clearly, without losing generality, we can assume that $\supp\left( x^{(n)} \right) \subset \{n, n+1, ...\}$ for $n \in \mathbb{N}$.
	In view of \eqref{INEQ : A} and the ideal property of the space $Y$ this means that
	\begin{equation} \label{inequality: seq multip}
		\norm{\lambda x^{(n)}}_Y \geqslant \norm{ \sum_{i=n}^{\infty} \lambda_i e_i }_{M(X,Y)} - \varepsilon \quad \text{ for every } \quad n \in \mathbb{N}.
	\end{equation}
	Next, we claim that the sequence $\left\{ x^{(n)} \right\}_{n=1}^{\infty} \subset \text{Ball}(X)$ is weakly null.
	Indeed, since the space $X$ is reflexive, so the space $X^{\times}$ is order continuous and $X^* \equiv X^{\times}$
	(keeping in mind that the space $X$ has the Fatou property we refer to \cite[Corollaries 4.3 and 4.4, p.~23]{BS88}).
	Therefore, for any $x^* = \{ x^*_i \}_{i=1}^{\infty}$ from $X^{\times}$, we have
	\begin{align*}
		\abs{\langle x^{(n)}, x^* \rangle}
			& = \abs{\sum_{i=1}^{\infty} x^{(n)}_i x^*_i} \\
			& = \abs{\sum_{i=n}^{\infty} x^{(n)}_i x^*_i} \\
			& \leqslant \norm{ \sum_{i=1}^{\infty} x^{(n)}_i e_i }_X \norm{ \sum_{i=n}^{\infty} x^*_i e_i }_{X^{\times}} \to 0
	\end{align*}
	as $n$ goes to infinity.
	Now, for any compact operator $K$ acting between $X$ and $Y$, we have
	\begin{align*}
		\norm{M_{\lambda} - K}_{X \rightarrow Y}
			& \geqslant \norm{(M_{\lambda} - K) ( x^{(n)} ) }_Y \\
			& \geqslant \norm{\lambda x^{(n)}}_Y - \norm{K x^{(n)} }_Y,
	\end{align*}
	where the last inequality follows from the reverse triangle inequality. Thus, remembering about inequality (\ref{inequality: seq multip}),
	\begin{equation*}
		\norm{M_{\lambda} - K}_{X \rightarrow Y}
			\geqslant \norm{ \sum_{i=n}^{\infty} \lambda_i e_i }_{M(X,Y)} -  \norm{K x^{(n)} }_Y - \varepsilon.
	\end{equation*}
	Going further, since every compact operator is {\it a fortiori} completely continuous and, as we have shown above, the sequence
	$\left\{ x^{(n)} \right\}_{n=1}^{\infty} \subset \text{Ball}_X$ is weakly null, so $\lim_{n \rightarrow \infty} \norm{K x^{(n)} }_Y = 0$.
	In consequence, we have
	\begin{equation*}
		\norm{M_{\lambda} - K}_{X \rightarrow Y} \geqslant \lim\limits_{n \rightarrow \infty} \norm{ \sum_{i=n}^{\infty} \lambda_i e_i }_{M(X,Y)} - \varepsilon
	\end{equation*}
	and this, due to $\varepsilon$'s arbitrariness, ends the proof in the first case.
	
	{\bf Second case}. Suppose that the space $Y$ is separable. Again, fix $\varepsilon > 0$. Let $\left\{ x^{(n)} \right\}_{n=1}^{\infty} \subset \text{Ball}_X$
	be the sequence constructed above. Further, let $K \colon X \rightarrow Y$ be a compact operator. Since the set $\left\{ K x^{(n)} \right\}_{n=1}^{\infty}$
	is relatively compact in $Y$, so it has an accumulation point, say $a = \{a_n\}_{n=1}^{\infty}$.
	This means that there is a subsequence $\left\{ x^{(n(k))} \right\}_{k=1}^{\infty}$ of the sequence $\left\{ x^{(n)} \right\}_{n=1}^{\infty}$ with
	\begin{equation}\label{inequality: podciag zbiezny}
		\norm{K x^{(n(k))} - y_a}_Y < \varepsilon \quad \text{ for all } \quad k \in \mathbb{N} \quad \text{ big enough}.
	\end{equation}
	Therefore, using again the reverse triangle inequality,
	\begin{align*}
		\norm{M_{\lambda} - K}_{X \rightarrow Y}
			& \geqslant \norm{(M_{\lambda} - K)( x^{(n(k))} )}_Y \\
			& \geqslant \norm{M_{\lambda} x^{(n(k))} - a}_Y - \norm{K x^{(n(k))} - a}_Y.
	\end{align*}
	However, the above inequality in tandem with (\ref{inequality: podciag zbiezny}) implies that for sufficiently big $k \in \mathbb{N}$, we have
	\begin{equation} \label{inequality: nierownosc ess seq}
		\norm{M_{\lambda} - K}_{X \rightarrow Y} \geqslant \norm{M_{\lambda} x^{(n(k))}  - a}_Y - \varepsilon.
	\end{equation}
	On the other hand, recalling that $\supp\left( x^{(n)} \right) \subset \{n, n+1, ...\}$ for $n \in \mathbb{N}$, we get
	\begin{align*}
		\norm{M_{\lambda} x^{(n(k))} - a}_Y
			& = \norm{\lambda x^{(n(k))} - a}_Y \\
			& \geqslant \norm{ \sum_{i = n(k)}^{\infty} \left[ \lambda_i x^{(n(k))}_i - a_i \right] e_i }_Y \\
			& \geqslant \norm{ \sum_{i = n(k)}^{\infty} \lambda_i x^{(n)}_i e_i }_Y - \norm{ \sum_{i = n(k)}^{\infty} a_i e_i }_Y \\
			& \geqslant \norm{ \sum_{i = n(k)}^{\infty} \lambda_i e_i }_{M(X,Y)} - 2\varepsilon,
	\end{align*}
	where the last inequality follows from inequality (\ref{inequality: seq multip}) together with the fact that the space $Y$ is order continuous.
	Finally, by combining the just obtained inequality with (\ref{inequality: nierownosc ess seq}),
	\begin{equation*}
		\norm{M_{\lambda} - K}_{X \rightarrow Y} \geqslant \norm{ \sum_{i=n(k)}^{\infty} \lambda_i e_i }_{M(X,Y)} - 3\varepsilon.
	\end{equation*}
	This, due to the arbitrariness of $K$ and the ideal property of the space $M(X,Y)$, gives
	\begin{equation*}
		\norm{M_{\lambda} \colon X \rightarrow Y}_{ess}
			\geqslant \lim\limits_{n \rightarrow \infty} \norm{ \sum_{i=n}^{\infty} \lambda_i e_i }_{M(X,Y)} - 3\varepsilon.
	\end{equation*}
	This proves what we wanted.
\end{proof}

In the algebraic setting, the above Theorem~\ref{THM: Essential norm Kothe sequence} reduces to the following

\begin{corollary} \label{COR: compact sequence X to X}
	{\it Let $X$ be a separable K{\" o}the sequence space with the Fatou property. Then
		\begin{equation*}
			\norm{M_{\lambda} \colon X \circlearrowleft}_{ess} = \limsup\limits_{n \rightarrow \infty} \abs{\lambda_n}.
		\end{equation*}
		In particular, the multiplication operator $M_{\lambda} \colon X \circlearrowleft$ is compact if, and only if, $\lambda \in c_0$.}
\end{corollary}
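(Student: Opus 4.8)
The plan is to obtain this statement as an immediate specialization of Theorem~\ref{THM: Essential norm Kothe sequence} to the diagonal case $Y = X$. First I would check that the hypotheses are met: the space $X$ has the Fatou property and is order continuous by assumption, so putting $Y \coloneqq X$ the alternative \enquote{the space $Y$ is order continuous} from that theorem is automatically satisfied. Consequently Theorem~\ref{THM: Essential norm Kothe sequence} applies and gives
\begin{equation*}
	\norm{M_{\lambda} \colon X \circlearrowleft}_{ess} = \lim\limits_{n \rightarrow \infty} \norm{\lambda \chi_{\{n, n+1, \ldots\}}}_{M(X,X)}.
\end{equation*}

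Next I would unwind the right-hand side using the identification $M(X,X) \equiv L_{\infty}$ recorded in the Toolbox. Since we work here in the sequence setting, $M(X,X) \equiv \ell_{\infty}$ with the supremum norm, whence for every $n \in \mathbb{N}$ we have
\begin{equation*}
	\norm{\lambda \chi_{\{n, n+1, \ldots\}}}_{M(X,X)} = \norm{\lambda \chi_{\{n, n+1, \ldots\}}}_{\ell_{\infty}} = \sup_{k \geqslant n} \abs{\lambda_k}.
\end{equation*}
The sequence $\left\{ \sup_{k \geqslant n} \abs{\lambda_k} \right\}_{n=1}^{\infty}$ is non-increasing, and its limit is precisely $\limsup_{n \rightarrow \infty} \abs{\lambda_n}$ by the very definition of the upper limit. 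Combining this with the displayed formula yields the asserted equality
\begin{equation*}
	\norm{M_{\lambda} \colon X \circlearrowleft}_{ess} = \limsup\limits_{n \rightarrow \infty} \abs{\lambda_n}.
\end{equation*}

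Finally, for the \enquote{in particular} part I would argue that $M_{\lambda} \colon X \circlearrowleft$ is compact if, and only if, its essential norm vanishes, that is, $\limsup_{n \rightarrow \infty} \abs{\lambda_n} = 0$, which is nothing but the statement that $\lambda \in c_0$; equivalently, one may invoke the compactness criterion already contained in Theorem~\ref{THM: Essential norm Kothe sequence}, namely $\lambda \in M(X,X)_o \equiv (\ell_{\infty})_o = c_0$. Since the whole argument is a direct translation through the equality $M(X,X) \equiv L_{\infty}$, there is no genuine obstacle here; the only points that deserve (routine) attention are the verification that the diagonal case indeed falls under the hypotheses of Theorem~\ref{THM: Essential norm Kothe sequence} and the elementary identification of the monotone limit of tail suprema with the upper limit.
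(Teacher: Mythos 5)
Your proposal is correct and follows essentially the same route as the paper: the paper's proof likewise specializes Theorem~\ref{THM: Essential norm Kothe sequence} to the diagonal case, uses $M(X,X) \equiv \ell_{\infty}$, and identifies the limit of the tail suprema with $\limsup_{n \rightarrow \infty} \abs{\lambda_n}$. Your additional checks (that the order-continuity hypothesis is met and that $M(X,X)_o = c_0$ settles the compactness criterion) are exactly the routine details the paper leaves implicit.
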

\begin{proof}
	Since $M(X,X) \equiv \ell_\infty$, so using Theorem~\ref{THM: Essential norm Kothe sequence} we immediately get
	\begin{equation*}
		\norm{M_{\lambda} \colon X \circlearrowleft}_{ess}
			= \lim\limits_{n \rightarrow \infty} \norm{ \sum_{i=n}^{\infty} \lambda_i e_i }_{\ell_\infty}
			= \lim\limits_{n \rightarrow \infty} \left( \sup\limits_{i \geqslant n} \abs{\lambda_i} \right)
			= \limsup\limits_{n \rightarrow \infty} \abs{\lambda_n}.
	\end{equation*}
\end{proof}

Finally, we can combine an atomic case (Theorem~\ref{THM: Essential norm Kothe sequence}) together with a non-atomic case (Theorem~\ref{THM: essnorm multipliers function})
to describe the essential norm of multiplication operators acting between two K{\" o}the spaces.

\begin{theorem}[Essential norm of multiplication operators between K{\" o}the spaces] \label{THM: ess norm func + seq}
	{\it Let $X$ and $Y$ be two K{\" o}the spaces with the Fatou property both defined over the same $\sigma$-finite measure space $(\Omega,\Sigma,\mu)$.
		Further, let $\Omega_c$ and $\Omega_a$ denote the non-atomic and purely atomic part of $(\Omega,\Sigma,\mu)$, respectively.
		Suppose that one of the spaces $X$ or $Y$ is reflexive. Then}
		\begin{equation*}
			\norm{M_{\lambda} \colon X \rightarrow Y}_{ess} \approx
				\max \left\{ \norm{M_{\lambda} \colon X(\Omega_c) \rightarrow Y(\Omega_c)}_{ess}, \norm{M_{\lambda} \colon X(\Omega_a) \rightarrow Y(\Omega_a)}_{ess} \right\}
		\end{equation*}
	{\it with an equivalence involving universal constants only.}
\end{theorem}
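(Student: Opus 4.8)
The plan is to exploit the band decomposition $\Omega = \Omega_c \sqcup \Omega_a$ furnished by $\sigma$-finiteness (the atomic part $\Omega_a$ being the countable union of all atoms) and to run everything through the two associated band projections $P_c \colon f \mapsto f\chi_{\Omega_c}$ and $P_a \colon f \mapsto f\chi_{\Omega_a}$. By the ideal property of the norm these are contractive projections on both $X$ and $Y$, with $P_c + P_a = \mathrm{id}$. Since multiplication by $\lambda$ preserves supports, $M_{\lambda}$ commutes with $P_c$ and $P_a$ and the cross terms vanish, so $M_{\lambda} = M_{\lambda}P_c + M_{\lambda}P_a$, the two summands living on the complementary bands. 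Identifying $P_c X$ (resp. $P_a X$) with the K{\"o}the function space $X|_{\Omega_c}$ (resp. the K{\"o}the sequence space $X|_{\Omega_a}$), the first summand factors as the contractive restriction $R^X_c \colon X \to X|_{\Omega_c}$, followed by the multiplier $M_{\lambda} \colon X|_{\Omega_c} \to Y|_{\Omega_c}$, followed by the contractive inclusion $\iota^Y_c \colon Y|_{\Omega_c} \to Y$; analogously on $\Omega_a$.

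Abbreviate $A \coloneqq \norm{M_{\lambda} \colon X|_{\Omega_c} \to Y|_{\Omega_c}}_{ess}$ and $B \coloneqq \norm{M_{\lambda} \colon X|_{\Omega_a} \to Y|_{\Omega_a}}_{ess}$. For the lower bound, given any compact $K \colon X \to Y$, the compression $R^Y_c K \iota^X_c \colon X|_{\Omega_c} \to Y|_{\Omega_c}$ is again compact, and for $f \in X|_{\Omega_c}$ one has $(M_{\lambda} - R^Y_c K \iota^X_c)f = R^Y_c (M_{\lambda} - K)\iota^X_c f$, because $M_{\lambda}f$ is already supported on $\Omega_c$; as $R^Y_c$ is contractive, this gives $A \leqslant \norm{M_{\lambda} - K}_{X \to Y}$. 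Passing to the infimum over $K$, and arguing symmetrically on $\Omega_a$, yields $\max\{A,B\} \leqslant \norm{M_{\lambda} \colon X \to Y}_{ess}$.

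For the upper bound, the factorization $M_{\lambda}P_c = \iota^Y_c \circ M_{\lambda} \circ R^X_c$ together with the ideal behaviour of the Calkin norm (pre- and post-composition with the contractions $R^X_c$ and $\iota^Y_c$ cannot increase the essential norm) gives $\norm{M_{\lambda}P_c \colon X \to Y}_{ess} \leqslant A$ and, likewise, $\norm{M_{\lambda}P_a \colon X \to Y}_{ess} \leqslant B$. The subadditivity of the essential norm then produces $\norm{M_{\lambda} \colon X \to Y}_{ess} \leqslant A + B \leqslant 2\max\{A,B\}$. Combining the two estimates proves the equivalence with the universal constants $c = 1$ and $C = 2$.

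It remains to account for the hypothesis, and this is where I expect the only genuine care to be needed. The reduction above is entirely \enquote{soft} and uses no reflexivity; its purpose is rather to ensure that $A$ and $B$ are accessible through Theorem~\ref{THM: essnorm multipliers function} and Theorem~\ref{THM: Essential norm Kothe sequence}. Reflexivity passes to complemented subspaces, hence to each band, a reflexive K{\"o}the space is automatically order continuous (it contains no lattice copy of $c_0$), and the Fatou property is inherited by bands as well. Thus if $X$ is reflexive then $X|_{\Omega_c}$ is order continuous (so Theorem~\ref{THM: essnorm multipliers function} applies) while $X|_{\Omega_a}$ stays reflexive (so Theorem~\ref{THM: Essential norm Kothe sequence} applies); if instead $Y$ is reflexive, the same two theorems apply via the reflexivity of $Y|_{\Omega_c}$ and the order continuity of $Y|_{\Omega_a}$. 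This bookkeeping of inherited local properties is exactly what explains why a single reflexivity assumption on either $X$ or $Y$ simultaneously unlocks both constituent results, and it, rather than the essential-norm estimate, is the delicate point of the argument.
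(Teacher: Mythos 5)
Your proof is correct, and its skeleton --- the band decomposition via the contractive projections onto $\Omega_c$ and $\Omega_a$, and the compression argument giving $\max\{A,B\} \leqslant \norm{M_{\lambda} \colon X \rightarrow Y}_{ess}$ --- is exactly the paper's (the paper also compresses an arbitrary compact $K$ by the two band projections and uses that $M_{\lambda}$ respects supports). Where you genuinely diverge is the upper bound: the paper chooses the explicit compact truncations $K_n(x) = \lambda x \chi_{\{1,\dots,n-1\}}$, estimates $\norm{M_{\lambda} - K_n}_{X \rightarrow Y} \leqslant \norm{M_{\lambda}}_{X|_{\Omega_c} \rightarrow Y|_{\Omega_c}} + \norm{M_{\lambda\chi_{\{n,n+1,\dots\}}}}_{X|_{\Omega_a} \rightarrow Y|_{\Omega_a}}$, and then must invoke Theorem~\ref{THM: essnorm multipliers function} and Theorem~\ref{THM: Essential norm Kothe sequence} (this is precisely where the reflexivity hypothesis enters) to convert these two quantities back into the essential norms $A$ and $B$; you instead split $M_{\lambda} = M_{\lambda}P_c + M_{\lambda}P_a$, factor each summand through the restricted multiplier, and use subadditivity together with the two-sided ideal property of the Calkin quotient norm ($\norm{STU}_{ess} \leqslant \norm{S}\,\norm{T}_{ess}\,\norm{U}$) to get $\norm{M_{\lambda} \colon X \rightarrow Y}_{ess} \leqslant A + B \leqslant 2\max\{A,B\}$ directly. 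This buys something real: your argument shows the displayed equivalence, with the same constants $1$ and $2$, for \emph{any} pair of K{\"o}the spaces --- no reflexivity (indeed not even the Fatou property) is needed for the estimate itself, and reflexivity serves only to make $A$ and $B$ computable through the two component theorems; your bookkeeping of how reflexivity passes to bands and forces order continuity (no lattice copy of $c_0$, so a KB-space) is correct and matches what the paper needs implicitly. What the paper's more concrete route buys in exchange is that it exhibits near-optimal approximants: the essential norm is realized, up to the factor $2$, by the truncations $K_n$, which ties the statement directly to the explicit formulas of Theorems~\ref{THM: essnorm multipliers function} and \ref{THM: Essential norm Kothe sequence}.
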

\begin{proof}
	Without loss of generality, we can assume that the space $M(X,Y)$ is non-trivial.
	Moreover, to make things easier, let us assume that every atom in $\Omega_a$ is equivalent to a singleton.
	Then, since $\Omega = \Omega_c \sqcup \Omega_a$ and the measure space $(\Omega,\Sigma,\mu)$ is assumed to be $\sigma$-finite,
	so $\mu$ restricted to $\Omega_a$ is nothing else but the weighted sum of countably many Dirac measures.
	In other words, $\mu(A) = \mu_c(A) + \sum_{n=1}^{\infty} w_n \delta_{a_n}(A)$ for $A \in \Sigma$,
	where $w_n$'s are positive real numbers and $a_n$'s are atoms in $\Omega_a$.
	
	Take $\lambda \in M(X,Y)$.
	To show the first inequality let $\{K_n\}_{n=1}^{\infty}$ be a sequence of compact operators defined as
	\begin{equation*}
		K_n \colon X \rightarrow Y, \quad
		K_n \colon f \rightsquigarrow \left[ \omega \rightsquigarrow \lambda(\omega) f(\omega) \chi_{\{ a_1,a_2,..., a_{n-1} \}}(\omega) \right],
	\end{equation*}
	where $n \geqslant 2$. We have
	\begin{align*}
		\norm{M_{\lambda} \colon X \rightarrow Y}_{ess} & \leqslant \norm{M_{\lambda} - K_n}_{X \rightarrow Y} \\
			& \leqslant \norm{M_{\lambda} - K_n}_{X(\Omega_c) \rightarrow Y(\Omega_c)} + \norm{M_{\lambda} - K_n}_{X(\Omega_a) \rightarrow Y(\Omega_a)} \\
			& = \norm{M_{\lambda}}_{X(\Omega_c) \rightarrow Y(\Omega_c)} + \norm{\text{Proj}_n M_{\lambda}}_{X(\Omega_a) \rightarrow Y(\Omega_a)}.
	\end{align*}
	Now, it follows from Theorem~\ref{THM: essnorm multipliers function} and Theorem~\ref{THM: Essential norm Kothe sequence} that
	\begin{equation*}
		\norm{M_{\lambda} \colon X(\Omega_c) \rightarrow Y(\Omega_c)}_{ess} = \norm{M_{\lambda}}_{X(\Omega_c) \rightarrow Y(\Omega_c)},
	\end{equation*}
	and, respectively,
	\begin{equation*}
		\norm{M_{\lambda} \colon X(\Omega_a) \rightarrow Y(\Omega_a)}_{ess}
			= \norm{\text{Proj}_n M_{\lambda}}_{X(\Omega_a) \rightarrow Y(\Omega_a)}.
	\end{equation*}
	In consequence, we have
	\begin{equation*}
		\norm{M_{\lambda} \colon X \rightarrow Y}_{ess}
			\leqslant 2\max \left\{ \norm{M_{\lambda} \colon X(\Omega_c) \rightarrow Y(\Omega_c)}_{ess}, \norm{M_{\lambda} \colon X(\Omega_a) \rightarrow Y(\Omega_a)}_{ess} \right\}.
	\end{equation*}

	To prove the second inequality, let us first define two operators
	\begin{equation*}
		\text{Proj}_{\bullet}^X \colon X \rightarrow X(\Omega_{\bullet}), \quad
		\text{Proj}_{\bullet}^X \colon f \rightsquigarrow \left[ \omega \rightsquigarrow f(\omega)\chi_{\Omega_{\bullet}}(\omega) \right]
	\end{equation*}
	and
	\begin{equation*}
		\text{Proj}_{\bullet}^Y \colon Y \rightarrow Y(\Omega_{\bullet}), \quad
		\text{Proj}_{\bullet}^Y \colon f \rightsquigarrow \left[ \omega \rightsquigarrow f(\omega)\chi_{\Omega_{\bullet}}(\omega) \right],
	\end{equation*}
	where $\bullet \in \{a, c\}$.
	It is straightforward to see that $\text{Proj}_{\bullet}^X$ and $\text{Proj}_{\bullet}^Y$ are the norm one projections
	onto $X(\Omega_{\bullet})$ and, respectively, $Y(\Omega_{\bullet})$.
	Now, we have
	\begin{align*}
		\norm{M_{\lambda} \colon X(\Omega_{\bullet}) \rightarrow Y(\Omega_{\bullet})}_{ess}
			& = \inf \left\{ \norm{M_{\lambda} - K}_{X(\Omega_{\bullet}) \rightarrow Y(\Omega_{\bullet})} \colon K \in \mathscr{K}(X(\Omega_{\bullet}),Y(\Omega_{\bullet})) \right\} \\
			& = \inf \left\{ \norm{\text{Proj}_{\bullet}^Y(M_{\lambda} - K)\text{Proj}_{\bullet}^X}_{X \rightarrow Y} \colon K \in \mathscr{K}(X,Y) \right\} \\
			& \leqslant \inf \left\{ \norm{M_{\lambda} - K}_{X \rightarrow Y} \colon K \in \mathscr{K}(X,Y) \right\} \\
			& = \norm{M_{\lambda} \colon X \rightarrow Y}_{ess}.
	\end{align*}
	Thus,
	\begin{equation*}
		\max \left\{ \norm{M_{\lambda} \colon X(\Omega_c) \rightarrow Y(\Omega_c)}_{ess}, \norm{M_{\lambda} \colon X(\Omega_a) \rightarrow Y(\Omega_a)}_{ess} \right\}
			\leqslant \norm{M_{\lambda} \colon X \rightarrow Y}_{ess}.
	\end{equation*}
	This ends the proof.
\end{proof}

\subsection{Measures of non-compactness}
Inspired by de Pagter and Schep's approach taken in \cite{PS88}, in this section we want to look at Theorem~\ref{THM: Essential norm Kothe sequence}
from measures of non-compactness's perspective.

Let $D$ be a bounded subset of a Banach space $X$. Recall, that the {\bf Hausdorff measure of non-compactness $\gamma(D)$} of $D$ is defined to be
the infimum over all $\varepsilon$'s such that $D$ can be covered with a finite number of balls with radii $\varepsilon > 0$.
Or, what comes to the same thing,
\begin{equation*}
	\gamma(D) = \inf \left\{ \varepsilon > 0 \colon D \subset K + \varepsilon\text{Ball}_X \text{ with } K \text{ compact} \right\}.
\end{equation*}

The {\bf measure of non-compactness} of an operator $T \colon X \rightarrow Y$ is also denoted by $\gamma$ and is given by
\begin{equation*}
	\gamma(T \colon X \rightarrow Y) \coloneqq \gamma(T\text{Ball}_X).
\end{equation*}
The functional $\gamma$ induces a norm on the quotient $\mathscr{L}(X,Y)/\mathscr{K}(X,Y)$.
However, this norm need not be complete in general (see \cite[Theorem~2.5]{AT87}).
We refer to \cite{BG79}, \cite{Dil87}, \cite{LS71} and \cite{PS88} for more.

We say that $X$ has the {\bf $\delta$-approximation property} (briefly, the {\bf $\delta$-AP}) whenever for each compact subset, say $D$,
of $X$ and for each $\varepsilon > 0$, there is a finite rank operator $T \colon X \rightarrow Y$ with $\norm{T}_{X \rightarrow Y} \leqslant \delta$
so that $\norm{Tx - x}_{X \rightarrow Y} \leqslant \varepsilon$ for every $x \in D$.
Furthermore, $X$ has the {\bf metric approximation property} (briefly, the {\bf MAP}), if it has the {\bf $1$-AP}.
Similarly, we say that $X$ has the {\bf $\delta$-compact approximation property} (briefly, the {\bf $\delta$-CAP}),
if in the definition of {\bf $\delta$-AP} a finite rank operator is replaced with a compact operator.
Moreover, $X$ has the {\bf metric compact approximation property} (briefly, the {\bf MCAP}), if it has the {\bf $1$-CAP}.

\begin{remark}
	Every Banach space with a Schauder basis has the {\bf $\delta$-AP} for some $\delta \geqslant 1$ (that is to say, the {\bf bounded approximation property BAP}).
	Clearly, since a finite rank operators are compact, so the {\bf $\delta$-CAP} is formally weaker that the {\bf $\delta$-AP}.
	Moreover, most Banach spaces are know to have the {\bf $\delta$-CAP} for some $\delta \geqslant 1$ (that is to say, the {\bf bounded compact approximation property BCAP}).
	However, because of Enflo's example, there exists a Banach space without {\bf BCAP} (see \cite{Enf73}; see also \cite[Theorem~1.e.7]{LT77}).
	\demo
\end{remark}

The main result of this section is the following

\begin{theorem}\label{thm: ess norm ri}
	{\it Let $X$ be a K{\" o}the sequence spaces and let $Y$ be a rearrangement invariant sequence space. Then}
	\begin{equation*}
		\norm{M_{\lambda} \colon X \rightarrow Y}_{ess} = \lim\limits_{n \rightarrow \infty} \norm{ \sum_{i = n}^{\infty} \lambda_i e_i }_{M(X,Y)}.
	\end{equation*}
\end{theorem}
\begin{proof}
	It is straightforward to see that
	\begin{equation} \label{INEQ: HMNC < ESS}
		\gamma(M_{\lambda} \colon X \rightarrow Y) \leqslant \norm{M_{\lambda} \colon X \rightarrow Y}_{ess}.
	\end{equation}
	Indeed, for any compact operator $K \colon X \rightarrow Y$, it is crystal clear that
	\begin{equation*}
		\gamma(M_{\lambda}\text{Ball}_X) = \gamma\left( (M_{\lambda}+K)\text{Ball}_X \right) \leqslant \norm{(M_{\lambda}+K)\text{Ball}_X}_Y \leqslant \norm{M_{\lambda}+K}_{X \rightarrow Y}.
	\end{equation*}
	Moreover, it is a particular part of Lebow and Schechter's result \cite[Theorem~3.6]{LS71}
	(later supplemented by Astala and Tylli \cite[Theorems~2.2 and 2.3]{AT87}) that
	\begin{equation} \label{INEQ: ESS < HMNC}
		\norm{M_{\lambda} \colon X \rightarrow Y}_{ess} \leqslant \delta \gamma(M_{\lambda} \colon X \rightarrow Y)
	\end{equation}
	provided $Y$ has the {\bf $\delta$-CAP}.
	Since, due to our assumptions, the space $Y$ is a rearrangement invariant, so it follows from Rao's result (see \cite[Theorem~4.6]{Rao70} and \cite{Rao73})
	that $Y$ has the {\bf MAP}.
	Consequently, $Y$ has the {\bf MCAP}, that is, {\bf $1$-CAP}.
	Then (\ref{INEQ: HMNC < ESS}) together with (\ref{INEQ: ESS < HMNC}) implies that
	\begin{equation} \label{EQ: ESS = HMNC}
		\norm{M_{\lambda} \colon X \rightarrow Y}_{ess} = \gamma(M_{\lambda} \colon X \rightarrow Y).
	\end{equation}
	(It is probably worth mentioning that up this point the above proof goes for any operator $T \colon X \rightarrow Y$,
	where $X$ and $Y$ are Banach spaces such that $Y$ has the {\bf MCAP}.) Putting things together, we have
	\begin{align*}
		\norm{M_{\lambda} \colon X \rightarrow Y}_{ess}
			& = \gamma(M_{\lambda} \colon X \rightarrow Y) \\
			& = \lim_{n \rightarrow \infty} \sup \left\{ \norm{ \sum_{i = n}^{\infty} y_i e_i }_Y \colon y = \{ y_i \}_{i=1}^{\infty} \in M_{\lambda}\text{Ball}_X \right\} \\
			& = \lim_{n \rightarrow \infty} \sup \left\{ \norm{ \sum_{i = n}^{\infty} \lambda_i x_i e_i }_Y \colon x = \{ x_i \}_{i=1}^{\infty} \in \text{Ball}_X \right\} \\
			& = \lim_{n \rightarrow \infty} \norm{ \sum_{i = n}^{\infty} \lambda_i e_i }_{M(X,Y)},
	\end{align*}
	where the first equality follows from (\ref{EQ: ESS = HMNC}),
	while the second equality is a consequence of Bana{\' s} and Goebel's formula \cite[Theorem~6.1.1, p.~23]{BG79}
	along with the fact that the space $Y$ has the ideal property.
\end{proof}

\begin{remark}[About Theorem~\ref{thm: ess norm ri}]
	We belive that Theorem~\ref{thm: ess norm ri} and Theorem~\ref{THM: Essential norm Kothe sequence} are incomparable
	(unfortunately, we know no example of a separable K{\" o}the sequence space without the {\bf MCAP}; see Remark~\ref{REMARK: Szankowski} below).
	Either way, from a pragmatic point of view it is definitely easier to check that a given space is order continuous
	than that it has the {\bf MCAP}.
	\demo
\end{remark}

\begin{remark}[On Szankowski's counter-example] \label{REMARK: Szankowski}
	In his remarkable 1976 paper, Szanowski \cite{Sza76} showed that there exist a Banach lattice which can be isometrically
	embedded into the space $(\bigoplus_{n=1}^{\infty} L_q(0,1))_{\ell_p}$ with $1 \leqslant q < p < \infty$ and which fails
	the {\bf CAP} (see \cite[Proposition~1]{Sza76}; see also a note at the end of the proof of Lemma~2, where he states that his
	argument also works for the weaker version of {\bf AP}, that is, {\bf CAP}). As mentioned later, by taking $1 < q < p < \infty$,
	there exists a uniformly convex (hence, due to Milman and Pettis result, also reflexive) Banach lattice without {\bf CAP}
	(see \cite[Proposition~2]{Sza76}). Since uniformly convex spaces are, in particular, uniformly monotone,
	so the said Banach lattice is order continuous. Actually, this Banach lattice is a K{\" o}the function space on $(0,1)$
	(see the proof of Lemma~1.g.3 in \cite{LT79}).
	\demo
\end{remark}

\begin{remark}[On Djakov and Ramanujan's question] \label{REMARK: Djakov-Ramanujan}
	In \cite{DR00} (precisely, see \cite[p.~318]{DR00}) Djakov and Ramanujan ask the following question
	\begin{quote}
		{\it \enquote{Is it true that every compact multiplier between the spaces $\ell_M$ and $\ell_N$ is a limit of finitely-supported multipliers?}}
	\end{quote}
	Thanks to Proposition~\ref{thm: ess norm ri} we can answer this question in the affirmative.
	To see this, it is enough to note, that the ideal $M(X,Y)_o$ is contained in the closure of finitely-supported
	functions living inside $M(X,Y)$ (see, for example, \cite[Theorem~3.11, p.~18]{BS88}).
	\demo
\end{remark}

Actually, the above remark can be lifted effortlessly to the following setting. 

\begin{corollary}
	{\it Let $X$ and $Y$ be two rearrangement invariant sequence spaces.
	Then every compact multiplier between $X$ and $Y$ is a limit of finitely-supported multipliers.}
\end{corollary}

\subsection{Around Pitt's theorem}

We know from Harry R. Pitt \cite{Pit32} (see also \cite[Proposition~2.c.3, p.~76]{LT77}, \cite[Theorem~2.1.4, p.~32]{AK06} and
\cite[Theorem~A2]{Ros69} for even more general result) that ${\mathscr L}(\ell_p,\ell_q) = {\mathscr K}(\ell_p,\ell_q)$
if, and only if, $1 \leqslant q < p \leqslant \infty$
(with the convention that whenever $p = \infty$, then we are working with $c_0$ instead of $\ell_{\infty}$).

\begin{remark}
	This classical and well-known result has had many generalization over the years. For example,
	\begin{itemize}
		\item[$\bullet$] Let $M$ and $N$ be two Orlicz functions both satisfying the $\Delta_2$-condition\footnote{A Young function $M$ is said to satisfy
			the {\bf $\Delta_2$-condition at zero} (or the {\bf $\Delta_2$-condition for \enquote{small} arguments}) if $\limsup_{t \rightarrow 0^{+}} M(2t)/M(t) < \infty$
			(see \cite[pp.~138--139]{LT77} and \cite[pp.~16--22]{Mal83} for more).} at zero.
		Then ${\mathscr L}(\ell_M,\ell_N) = {\mathscr K}(\ell_M,\ell_N)$ if, and only if, $\alpha_M > \beta_N$,
		where $\alpha_M$ and $\beta_N$ are the so-called Matuszewska--Orlicz indices (see \cite[p.~149]{LT77} and \cite[Chapter~11]{Ma89}).
		\item[$\bullet$] Let $X$ and $Y$ be two Banach spaces. Suppose that $X$ has an unconditional finite-dimensional expansion of the identity.
		Then, as was proved by Kalton, ${\mathscr L}(X,Y) = {\mathscr K}(X,Y)$ if, and only if, the space ${\mathscr L}(X,Y)$ contains no isomorphic copy of $\ell_{\infty}$
		(see \cite[Theorem~6]{Kal74} for details).
	\end{itemize}
	The interested reader is referred to \cite{DLMR00} and \cite{LM02} for more.
	\demo
\end{remark}

In the context of pointwise multipliers, due to Theorem~\ref{THM: Essential norm Kothe sequence}, Pitt's compactness theorem
reduces to the simple fact that the space $M(\ell_p,\ell_q) = \ell_r$ with $1/r = 1/q - 1/p$ is order continuous, while the space
$M(\ell_q,\ell_p) = \ell_{\infty}$ is not.
In general, we have the following result (which should probably be compared with \cite[Proposition~3.1]{Cro69}).
	
	\begin{proposition}[Pitt's compactness theorem for multipliers] \label{PROP: Pitt's multiplication}
		{\it Let $E$ and $F$ be two K{\" o}the sequence spaces with the Fatou property. Then $M(E,F) = \mathscr{K} \cap M(E,F)$
		if, and only if, the space $M(X,Y)$ contains no isomorphic copy of $\ell_{\infty}$.}
	\end{proposition}
	\begin{proof}
		Suppose that the space $M(X,Y)$ contains no isomorphic copy of $\ell_{\infty}$. Then, due to Lozanovski{\u \i}'s result \cite{Loz69},
		the space $M(X,Y)$ is order continuous.
		Now, this is a direct consequence of the first part of the proof of Theorem~\ref{THM: Essential norm Kothe sequence}
		(note that we need no additional assumptions on $E$ and $F$).
		
		Next, suppose the space $M(X,Y)$ contains an isomorphic copy of $\ell_{\infty}$. This means that $M(E,F)_o \neq M(E,F)$.
		Take $\xi = \{ \xi_i \}_{i=1}^{\infty}$ from $M(X,Y) \setminus M(X,Y)_o$. Then, due to \cite[Lemma~5]{HN04},
		there exists $\delta > 0$ and a strictly increasing sequence, say $\{n(k)\}_{k=1}^{\infty}$, of natural numbers such that
		\begin{equation} \label{eq: 1}
			\norm{ \sum_{i = n(k)}^{n(k+1)} \xi_i e_i }_{M(X,Y)} > \delta \quad \text{ for } \quad k \in \mathbb{N}.
		\end{equation}
		Moreover, since the space $M(X,Y)$ is furnished with the operator norm, so there is a sequence, say $\left\{ x^{(k)} \right\}_{k=1}^{\infty}$,
		in $\text{Ball}_X$ with $\text{supp}( x^{(k)} ) \subset \left\{ n(k), n(k) +1, ..., n(k+1) \right\}$ and such that
		\begin{equation} \label{eq: 2}
			\norm{ \sum_{i = n(k)}^{n(k+1)} \xi_i x^{(k)}_i e_i }_Y
			\geqslant \norm{ \sum_{i = n(k)}^{n(k+1)} \xi_i e_i }_{M(X,Y)} - \frac{\delta}{2} \quad \text{ for } \quad k \in \mathbb{N}.
		\end{equation}
		Now, combining (\ref{eq: 1}) together with (\ref{eq: 2}), we get
		\begin{equation} \label{EQ: not relatively compact}
			\norm{ \sum_{i=1}^{\infty} \xi_i x^{(k)}_i e_i }_Y
				= \norm{ \sum_{i = n(k)}^{n(k+1)} \xi_i x^{(k)}_i e_i }_Y > \frac{\delta}{2}
					\quad \text{ for } \quad k \in \mathbb{N}.
		\end{equation}
		However, since the image of a bounded sequence $\left\{ x^{(k)} \right\}_{k=1}^{\infty}$ through the multiplication operator $M_{\xi}$
		is just $\left\{ \xi x^{(k)} \right\}_{k=1}^{\infty}$, so \eqref{EQ: not relatively compact} means that the set $\left\{ \xi x^{(k)} \right\}_{k=1}^{\infty}$
		is not relatively compact in $Y$.
		In consequence, the operator $M_{\xi} \colon  X \rightarrow Y$ is not compact. This completes the proof.
	\end{proof}
	
	\begin{remark} \label{REMARK: general Pitt}
		Let $1 \leqslant p \neq q \leqslant \infty$.
		It is straightforward to see that the following eight conditions are equivalent:
		\begin{itemize}
			\item[$\bullet$] $p < q$,
			\item[$\bullet$] $\ell_p \hookrightarrow \ell_q$,
			\item[$\bullet$] $M(\ell_p,\ell_q) = \ell_{\infty}$,
			\item[$\bullet$] $M(\ell_q,\ell_p) = \ell_{r}$ with $1/r = 1/p - 1/q$,
			\item[$\bullet$] the space $M(\ell_p,\ell_q)$ contains an isomorphic copy of $\ell_{\infty}$,
			\item[$\bullet$] the space $M(\ell_q,\ell_p)$ is separable,
			\item[$\bullet$] there exists a non-compact multiplier between $\ell_p$ and $\ell_q$,
			\item[$\bullet$] every multiplier between $\ell_q$ and $\ell_p$ is compact.
		\end{itemize}
		
		In general, however, the situation is more delicate.
		For example, there is a pair of rearrangement invariant spaces, say $E$ and $F$, such that $E \subsetneqq F$,
		but at the same time both spaces $M(E,F)$ and $M(F,E)$ contain an isomorphic copy of $\ell_{\infty}$ (a rather non-obvious example illustrating
		this situation is given in Example~\ref{PROPOSITION: Appendix A - counterexample factorization}).
		This imposes important limitations on the general variant of Pitt's compactness theorem, namely,
		we need stronger types of separation between spaces than just set-theoretic inclusion (like, for example,
		strict inequality between lower and upper $p$-estimates or lower and upper Boyd indices;
		see \cite{DLMR00} and \cite[pp.~138--139]{LT77} where these ideas are further developed).
		\demo
	\end{remark}

\subsection{Weak essential norms}
Recall that an operator acting between two Banach spaces is {\bf weakly compact} if it takes bounded subset to relatively weakly compact one.
Equivalently, thanks to the Eberlein--{\v S}mulian theorem, an operator is weakly compact if, and only if, every bounded sequence admits a subsequence
whose image is weakly convergent.

As we saw earlier (see Theorem~\ref{THM: essnorm multipliers function}), there are basically no compact multiplication operators between K{\" o}the
function spaces. It is natural, then, to weaken this notion and ask, for example, about weakly compact multipliers.

But, before we proceed to the formulation of the next result, let us remind of a concept that, in a sense, will play one of the main role here. 
Precisely, a K{\" o}the space (in general, nothing prevents considering Banach lattices) has the {\bf positive Schur property} if every weakly null
sequence\footnote{As it turns out, it is enough to consider disjoint sequences only (see~\cite[Corollary~2.3.5]{MN91}).} with positive terms is norm
convergent. This property was investigated by Wnuk (see \cite{Wnu93} and \cite{Wnu89}). Due to Rosenthal's $\ell_1$-theorem, K{\" o}the spaces having
the positive Schur property are exactly those spaces which are {\bf 1-disjointly homogeneous} (briefly, {\bf 1-DH}), that is, in which every normalized
disjoint sequence has a subsequence equivalent to unit vector basis of $\ell_1$ (see~\cite[Theorem~7]{Wnu93}; cf.~\cite[Proposition~4.9]{FHSTT14}).
This class of spaces was introduced by Flores, Tradacete and Troitsky in \cite{FTT09} and attracted much attention in recent times (see, e.g., \cite{FHSTT14} and
\cite{LTM22} and references therein).

\begin{remark}
	The class of 1-disjointly homogeneous spaces significantly exceeds the family of $L_1$-spaces, because, for example,
	\begin{itemize}
		\item[$\bullet$] any Orlicz space $L_M$ generated by Young function $M$ such that {\bf (a)} $M$ satisfies the $\Delta_2$-condition for \enquote{large} arguments;
		{\bf (b)} the Young conjugate $M^*$ of $M$ satisfies the $\Delta_0$-condition; and {\bf (c)} $M$ is equivalent to the linear function for \enquote{small} arguments
		is 1-DH (clearly, if we narrow our considerations only to Orlicz spaces defined on $(0,1)$, then the last condition may be ignored; see \cite{LTM22} for more details);
		\item[$\bullet$] any Lorentz space $\Lambda_{\varphi}(0,1)$ is 1-DH provided the following three conditions are satisfied, namely,
		{\bf (a)} $\lim_{t \rightarrow 0^{+}} \varphi(t) = 0$; {\bf (b)} $\varphi(1) > 0$; and {\bf (c)} $\int_0^1 \varphi(t)dt = 1$
		(in particular, Lorentz spaces $L_{p,1}(0,1)$ are 1-DH for all $1 < p < \infty$);
		\item[$\bullet$] for each $1 < p < \infty$ the space $L_{p,1}(0,\infty) \cap L_1(0,\infty)$ is 1-DH (see~\cite[Theorem~8]{LTM22}).
	\end{itemize}
	Note, however, that spaces with the positive Schur property, or, which is one thing, 1-DH spaces, are in general not stable under isomorphism.
	\demo
\end{remark}

We are now ready to present the main result here.

\begin{theorem}[Weak essential norm of multiplication operators acting between K{\" o}the function spaces] \label{THM: weak ess norm}
	{\it Let $X$ and $Y$ be two K{\" o}the function spaces with the Fatou property both defined over the same $\sigma$-finite measure space
		$(\Omega,\Sigma,\mu)$. Suppose that the space $Y$ is} $1$-DH {\it and that every simple function\footnote{By the {\bf simple function}
		we understand here any function defined on $\Omega$ which is a finite linear combination of characteristic functions of sets of
		finite measure.} in the space $M(X,Y)$ has an absolutely continuous norm. Then}
		\begin{equation*}
			\norm{M_{\lambda} \colon X \rightarrow Y}_{w} = \lim_{n \rightarrow \infty} \norm{\lambda\chi_{\mathcal{U}_n(\lambda)}}_{M(X,Y)},
		\end{equation*}
		{\it where\footnote{It is clear that if the measure space $(\Omega,\Sigma,\mu)$ is finite, then it is enough to put
			$\mathcal{U}_n(\lambda) \coloneqq \left\{ \omega \in \supp(\lambda) \colon \abs{\lambda(\omega)} > n \right\}$ for $n \in \mathbb{N}$.}}
		\begin{equation*}
			\mathcal{U}_n(\lambda) \coloneqq \bigcup\limits_{N\geqslant n}\Omega_N
				\cup \left\{ \omega \in \supp(\lambda) \colon \abs{\lambda(\omega)} > n \right\} \quad \textit{ for } \quad n \in \mathbb{N}
		\end{equation*}
		{\it and $\{\Omega_N\}_{N=1}^{\infty}$ is a decomposition of $\Omega$ into a pairwise disjoint family of sets of finite measure.
		In particular, the multiplication operator $M_{\lambda} \colon X \rightarrow Y$ is weakly compact if, and only if, $\lambda \in M(X,Y)_o$.}
\end{theorem}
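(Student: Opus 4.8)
The plan is to establish the two inequalities $\norm{M_{\lambda} \colon X \to Y}_w \geqslant R$ and $\norm{M_{\lambda} \colon X \to Y}_w \leqslant R$, where $R \coloneqq \lim_{n} \norm{\lambda\chi_{\mathcal{U}_n(\lambda)}}_{M(X,Y)}$; this limit exists and equals the infimum because the sets $\mathcal{U}_n(\lambda)$ decrease, so the norms decrease, and in particular each term is $\geqslant R$. The single engine powering both halves is the Dunford--Pettis-type criterion of \cite{LTM22}: since $Y$ is $1$-DH, a bounded subset $A \subseteq Y$ is relatively weakly compact if, and only if, it is \emph{equi-order-continuous}, that is, $\sup_{y \in A}\norm{y\chi_{\sigma_m}}_Y \to 0$ for every sequence $\{\sigma_m\}$ of measurable sets decreasing to a null set (this uniform condition automatically absorbs both the large values and the tails to infinity encoded in $\mathcal{U}_n(\lambda)$). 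I will also use that a $1$-DH space has the positive Schur property (see \cite[Theorem~7]{Wnu93}) and is therefore order continuous. Unlike the compact case (Theorem~\ref{THM: essnorm multipliers function}), where one exploited complete continuity of compact operators against the weakly null sequence $\{fr_n^{\Omega}\}$, for weak compactness the passage ``weakly null $\mapsto$ norm null'' is unavailable and is replaced by this equi-order-continuity criterion.

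For the lower bound, fix $\varepsilon > 0$ and for each $n$ pick $g_n \in \text{Ball}(X)$ with $\supp(g_n) \subseteq \mathcal{U}_n(\lambda)$ and $\norm{\lambda g_n}_Y \geqslant \norm{\lambda\chi_{\mathcal{U}_n(\lambda)}}_{M(X,Y)} - \varepsilon \geqslant R - \varepsilon$; the support restriction costs nothing since $\chi_{\mathcal{U}_n(\lambda)}$ does not increase the $X$-norm and $\lambda\chi_{\mathcal{U}_n(\lambda)}g = \lambda\chi_{\mathcal{U}_n(\lambda)}(g\chi_{\mathcal{U}_n(\lambda)})$. Let $W \colon X \to Y$ be weakly compact. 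Then $\{Wg_n\}$ is relatively weakly compact, hence equi-order-continuous; applied to $\mathcal{U}_k(\lambda) \downarrow \emptyset$, for given $\eta > 0$ there is $k_0$ with $\sup_n \norm{(Wg_n)\chi_{\mathcal{U}_{k_0}(\lambda)}}_Y < \eta$. For $n \geqslant k_0$ we have $\supp(\lambda g_n) \subseteq \mathcal{U}_n(\lambda) \subseteq \mathcal{U}_{k_0}(\lambda)$, so restricting to $\mathcal{U}_{k_0}(\lambda)$ and using the reverse triangle inequality,
\[
	\norm{M_{\lambda} - W}_{X \to Y} \geqslant \norm{\big((M_{\lambda} - W)g_n\big)\chi_{\mathcal{U}_{k_0}(\lambda)}}_Y \geqslant \norm{\lambda g_n}_Y - \norm{(Wg_n)\chi_{\mathcal{U}_{k_0}(\lambda)}}_Y \geqslant R - \varepsilon - \eta.
\]
As $W$, $\varepsilon$, $\eta$ are arbitrary, $\norm{M_{\lambda}}_w \geqslant R$. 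Note that this argument elegantly sidesteps having to prove $\{g_n\}$ weakly null, which is unclear here since $X$ is not assumed order continuous.

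For the upper bound, split $M_{\lambda} = M_{\lambda\chi_{\mathcal{U}_n(\lambda)}} + M_{\lambda\chi_{\mathcal{U}_n(\lambda)^c}}$ and show the second summand is weakly compact, so that $\norm{M_{\lambda}}_w \leqslant \norm{\lambda\chi_{\mathcal{U}_n(\lambda)}}_{M(X,Y)} \to R$. On $\mathcal{U}_n(\lambda)^c$ the symbol is bounded by $n$ and supported on the finite-measure set $\bigcup_{N<n}\Omega_N$, so everything reduces to the following \emph{crux lemma}: under our hypotheses every $h \in M(X,Y)$ with $\abs{h} \leqslant C\chi_F$ and $\mu(F) < \infty$ is order continuous. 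Granting this, $\sup_{g \in \text{Ball}(X)} \norm{\lambda\chi_{\mathcal{U}_n(\lambda)^c}g\chi_{\sigma_m}}_Y = \norm{\lambda\chi_{\mathcal{U}_n(\lambda)^c}\chi_{\sigma_m}}_{M(X,Y)} \to 0$, so the image $\{\lambda\chi_{\mathcal{U}_n(\lambda)^c}g \colon g \in \text{Ball}(X)\}$ is equi-order-continuous, hence relatively weakly compact by the criterion, and $M_{\lambda\chi_{\mathcal{U}_n(\lambda)^c}}$ is weakly compact.

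Proving the crux lemma is the main obstacle, and I would argue by contradiction. If $h \geqslant 0$ is not order continuous there are $\sigma_m \downarrow \emptyset$ and $c > 0$ with $\norm{h\chi_{\sigma_m}}_{M(X,Y)} \geqslant c$; choosing $f_m \in \text{Ball}(X)$ supported in $\sigma_m$ with $\norm{hf_m}_Y \geqslant c/2$ and using the order continuity of $Y$ to discard deep tails, a gliding-hump extraction produces a \emph{disjoint} sequence $w_j = hf_{m_j}\chi_{E_j}$ in $Y$ with $E_j \subseteq F$ disjoint, $\mu(E_j) \to 0$ and $\norm{w_j}_Y \geqslant 3c/8$, whence $\norm{h\chi_{E_j}}_{M(X,Y)} \geqslant 3c/8$. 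A layer-cake (Minkowski integral) estimate on $h\chi_{E_j} \leqslant C\chi_{E_j}$ yields levels $t_j \in (0,C)$ for which $G_j \coloneqq \{h > t_j\} \cap E_j$ satisfies $\chi_{G_j} \leqslant h/t_j \in M(X,Y)$ --- so $\chi_{G_j}$ is a genuine simple function of $M(X,Y)$ --- with $\norm{\chi_{G_j}}_{M(X,Y)} \geqslant c''$ for some $c'' > 0$. One then assembles the disjoint $G_j$ into a single \emph{spread} characteristic function lying in $M(X,Y)$ yet failing to have absolutely continuous norm (testing against $\bigcup_{j \geqslant m}G_j \downarrow \emptyset$ keeps its norm $\geqslant c''$), contradicting the hypothesis that \emph{every} simple function in $M(X,Y)$ is order continuous. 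The delicate heart of the matter is to guarantee that this spread characteristic function indeed belongs to $M(X,Y)$ rather than having infinite norm; here one first normalizes the $\norm{\chi_{G_j}}_{M(X,Y)}$ into a fixed band by shrinking each $G_j$ (legitimate, as each $\chi_{G_j}$ is already order continuous), and then uses the order continuity of $Y$ to control the disjoint sum. Finally, the two inequalities give the formula, and the last assertion is immediate: since $\mathcal{U}_n(\lambda) \downarrow$ a null set, the characterization of order continuity shows $R = 0$ exactly when $\lambda \in M(X,Y)_o$, so $M_{\lambda}$ is weakly compact if, and only if, $\lambda \in M(X,Y)_o$.
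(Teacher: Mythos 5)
Your architecture coincides with the paper's: your lower bound is, up to reindexing, exactly the paper's argument (norming elements $g_n \in \text{Ball}(X)$ supported in $\mathcal{U}_n(\lambda)$, the Dunford--Pettis criterion of \cite{LTM22} applied to $\{Wg_n\}$, reverse triangle inequality), and for the upper bound both you and the paper split off $M_{\lambda\chi_{\mathcal{U}_n(\lambda)^{\complement}}}$ and show it is weakly compact by proving that the image of $\text{Ball}(X)$ is $Y$-equicontinuous ($L$-weakly compact), hence relatively weakly compact by \cite[Proposition~3.6.5]{MN91}. The problem is your \emph{crux lemma}. Its proof hinges on assembling the disjoint sets $G_j$ into a single characteristic function $\chi_{G}$, $G = \bigcup_j G_j$, which must simultaneously (i) belong to $M(X,Y)$, so that the hypothesis on simple functions applies to it, and (ii) have tail norms $\norm{\chi_{\bigcup_{j\geqslant m}G_j}}_{M(X,Y)}$ bounded below, so that absolute continuity is violated. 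These two requirements pull in opposite directions, and your sketch does not reconcile them. If you keep $\norm{\chi_{G_j}}_{M(X,Y)} \geqslant c''$ uniformly (needed for (ii)), nothing gives (i): the quantity $\norm{\chi_{G}}_{M(X,Y)} = \sup_{\norm{f}_X \leqslant 1}\norm{\sum_j f\chi_{G_j}}_Y$ is not controlled by the individual norms, and the $1$-DH hypothesis works \emph{against} you here, since disjoint sequences in $Y$ are $\ell_1$-like, so disjoint sums tend to add norms; ``order continuity of $Y$'' gives no control over such a sum. If instead you shrink the $G_j$ so that $\norm{\chi_{G_j}}_{M(X,Y)}$ becomes summable --- which does yield (i), via $\norm{\sum_j f\chi_{G_j}}_Y \leqslant \sum_j \norm{f}_X\norm{\chi_{G_j}}_{M(X,Y)}$ --- then the tail norms tend to zero and no contradiction with the hypothesis arises. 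So the lemma is not proved; and under your weak reading of the hypothesis (applying only to those simple functions that happen to lie in $M(X,Y)$) it is not even clear it is true.

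The detour is also unnecessary, and this is precisely where the paper proceeds differently. Read the hypothesis as the paper does (and as one must, for it to have content): simple functions over sets of finite measure lie in $M(X,Y)$ and have absolutely continuous norm there. Then on $F_n \coloneqq \mathcal{U}_n(\lambda)^{\complement}$ one has $\abs{\lambda\chi_{F_n}} \leqslant n\chi_{F_n}$ with $\mu(F_n) < \infty$, so $n\chi_{F_n}$ is a simple function in $M(X,Y)$, hence belongs to $M(X,Y)_o$; since $M(X,Y)_o$ is an order ideal (\cite[Theorem~3.8, p.~16]{BS88}), it follows at once that $\lambda\chi_{F_n} \in M(X,Y)_o$, which is all your upper bound needs. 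With the crux lemma replaced by this one-line domination argument, the rest of your proof (equicontinuity of the image of the ball, weak compactness of $M_{\lambda\chi_{F_n}}$, passage to the limit, and the final characterization of weak compactness via $\lambda \in M(X,Y)_o$) is correct and agrees with the paper.
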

\begin{proof}
	Take $\lambda \in M(X,Y)$ and note that $\{\mathcal{U}_n(\lambda)\}_{n=1}^{\infty}$ is a decreasing family of measurable sets with
	$\mu\left( \bigcap_{n=1}^{\infty} \mathcal{U}_n(\lambda) \right) = 0$.
	
	To demonstrate the first inequality observe that all sets
	\begin{equation*}
		\mathcal{U}_n(\lambda)^{\complement} \coloneqq \supp(\lambda) \setminus \mathcal{U}_n(\lambda)
	\end{equation*}
	have a finite measure and that the symbol $\lambda$ is a bounded function when restricted to the set $\mathcal{U}_n(\lambda)^{\complement}$ for all $n \in \mathbb{N}$.
	Therefore, thanks to our assumptions imposed on the space $M(X,Y)$, we immediately see that $\lambda\chi_{\mathcal{U}_n(\lambda)^{\complement}} \in M(X,Y)_o$.
	To make our life easier, set $\widetilde{\lambda}_n \coloneqq \lambda\chi_{\mathcal{U}_n(\lambda)^{\complement}}$. Next, for each $n \in \mathbb{N}$, let
	$\left\{ A^n_k \right\}_{k=1}^{\infty} \subset \Sigma$ be a decreasing family of measurable subsets with $\mu\left( \bigcap_{k=1}^{\infty} A^n_k \right) = 0$.
	Since $\widetilde{\lambda}_n \in M(X,Y)_o$, so it follows that for all $\varepsilon > 0$ there is $K \in \mathbb{N}$ such that for all $k > K$ we have
	\begin{equation*}
		\sup\limits_{g \in M_{\widetilde{\lambda}_n}\text{Ball}_X} \norm{g\chi_{A^n_k}}_Y
			= \sup\limits_{h \in \text{Ball}_X} \norm{\widetilde{\lambda}_n h\chi_{A^n_k}}_Y
			= \norm{\widetilde{\lambda}_n \chi_{A^n_k}}_{M(X,Y)} < \varepsilon.
	\end{equation*}
	However, this exactly means that the set $M_{\widetilde{\lambda}_n} \text{Ball}_X$ is $Y$-equicontinuous (or, using a different nomenclature, $L$-weakly compact;
	see \cite[Definition~3.6.1, p.~212]{MN91}), so applying \cite[Proposition~3.6.5, p.~212]{MN91}, also relatively weakly compact. Consequently, all operators
	$M_{\widetilde{\lambda}_n}$ for $n \in \mathbb{N}$ are weakly compact, and we have the inequalities
	\begin{align*}
		\norm{M_{\lambda} \colon X \rightarrow Y}_{w}
			& \leqslant \inf\{\norm{M_{\lambda} - W}_{X \rightarrow Y} \colon W \in \mathscr{W}(X,Y)\} \\
			& \leqslant \norm{M_{\lambda} - M_{\widetilde{\lambda}_n}}_{X \rightarrow Y} \\
			& = \norm{\lambda - \widetilde{\lambda}_n}_{M(X,Y)} \\
			& = \norm{\lambda\chi_{\mathcal{U}_n}}_{M(X,Y)}.
	\end{align*}
	This clearly implies that
	\begin{equation*}
		\norm{M_{\lambda} \colon X \rightarrow Y}_{w} \leqslant \lim\limits_{n \rightarrow \infty} \norm{\lambda\chi_{\mathcal{U}_n}}_{M(X,Y)}
	\end{equation*}
	and we are done.
	
	Let us pass to the proof of the second inequality. Fix $\varepsilon > 0$ and take $\lambda \in M(X,Y)$. Still, let $\left\{ \mathcal{U}_n(\lambda) \right\}_{n=1}^{\infty}$
	be as in the formulation of the theorem. For any $n \in \mathbb{N}$ we can find $f_n \in \text{Ball}_X$ with $\supp(f_n) \subset \mathcal{U}_n(\lambda)$
	such that
	\begin{equation} \label{inequality: wydobywanie normy weakly compact}
		\norm{\lambda f_n}_Y = \norm{\lambda \chi_{\mathcal{U}_n(\lambda)} f_n}_Y \geqslant \norm{\lambda \chi_{\mathcal{U}_n(\lambda)}}_{M(X,Y)} - \varepsilon.
	\end{equation}
	Take any weakly compact operator, say $W$, acting between $X$ and $Y$. Since the space $Y$ is $1$-disjointly homogeneous, so due to
	\cite[Theorem~3.6]{LTM22} the space $Y$ satisfies\footnote{In fact, in the class of K{\" o}the function spaces, both conditions are equivalent.}
	the so-called Dunford--Pettis criterion, that is, every bounded and relatively weakly compact subset of $Y$ is $Y$-equicontinuous. However, this means
	that the set $\{W(f_n)\}_{n=1}^{\infty}$ is actually $Y$-equicontinuous and
	\begin{equation} \label{inequality: equi-continuity}
		\norm{W(f_n)\chi_{\mathcal{U}_n(\lambda)}}_Y < \varepsilon
	\end{equation}
	for all sufficiently large $n$. Therefore,
	\begin{align*}
		\norm{M_{\lambda} - W}_{X \rightarrow Y}
			& \geqslant \norm{(M_{\lambda} - W)(f_n)}_Y \\
			& = \norm{(\lambda f_n - W(f_n))\chi_{\mathcal{U}_n(\lambda)}}_Y \\
			& \geqslant \norm{\lambda f_n}_Y - \norm{W(f_n)\chi_{\mathcal{U}_n(\lambda)}}_Y,
	\end{align*}
	and, using (\ref{inequality: wydobywanie normy weakly compact}) along with (\ref{inequality: equi-continuity}), we get that for all
	sufficiently big $n$ we have
	\begin{equation*}
		\norm{M_{\lambda} - W}_{X \rightarrow Y} \geqslant \norm{\lambda \chi_{\mathcal{U}_n(\lambda)}}_{M(X,Y)} - 2 \varepsilon.
	\end{equation*}
	This is enough to conclude the proof.
\end{proof}

Next two examples explain the role of the assumption imposed on the space $Y$ in the above result.
However, before we dive deeper into details, we need to define the objects of our considerations.

Let $\varphi$ be an increasing concave function on $(0,1)$ with $\varphi(0^+) = 0$. Recall that the {\bf Lorentz space
$\Lambda_{{\varphi},q}(0,1)$} with $1 \leqslant q < \infty$ and the {\bf Marcinkiewicz space $M_{\varphi}(0,1)$} consists
of all equivalence classes, modulo equality almost everywhere, of real-valued Lebesgue measurable functions defined on
the unit interval $(0,1)$ with
\begin{equation} \label{EQ: Lorentz function}
	\norm{f}_{\Lambda_{{\varphi},q}} \coloneqq \left( \int_0^1 \left( \varphi(t)f^{\star}(t) \right)^q \frac{dt}{t} \right)^{1/q} < \infty
\end{equation}
and, respectively,
\begin{equation} \label{EQ: Marcinkiewicz function}
	\norm{f}_{M_{\varphi}} \coloneqq \sup\limits_{0 < t < 1} \frac{\varphi(t)}{t} \int_0^t f^{\star}(s)ds  < \infty.
\end{equation}
In particular, if the Lorentz space  $\Lambda_{{\varphi},q}$ and the Marcinkiewicz space $M_{\varphi}$ is generated by
the power function $\varphi(t) = t^{1/p}$, where $1 < p < \infty$, then it is denoted by $L_{p,q}$ and, respectively, $L_{p,\infty}$.
Of course, $L_{p,p} \equiv L_p$. Moreover, the spaces $L_{p,\infty}$ are sometimes also called the {\bf weak $L_p$-spaces}.

\begin{example}[Multipliers between weak $L_p$-spaces] \label{EXP: not weakly compact between Marcinkiewicz}
	Let $1 \leqslant p < q < \infty$. Recall that
	\begin{equation*}
		M\left( L_{p,\infty}(0,1), L_{q,\infty}(0,1) \right) = L_{r,\infty}(0,1),
	\end{equation*}
	where $1/r = 1/p - 1/q$ (see, e.g., \cite[Theorem~4]{KLM19} and references given there). Note also that the space $L_{q,\infty}$ is not\footnote{This space
	is not even {\it disjointly homogeneous} (in particular, it is not $p$-DH for any $1 \leqslant p \leqslant \infty$), because inside $L_{q,\infty}$ we can find a disjoint
	sequence with a subsequence equivalent to unit vector basis of $c_0$, as well as another disjoint sequence spanning $\ell_q$ (see, e.g., \cite[Example, p.~659]{FTT09}
	for more details). On the other hand, the space $(L_{q,\infty})_o$ is $\infty$-DH.}	$1$-DH.
	
	Take the function $\lambda_0(t) = t^{-1/r}$ from $M(L_{p,\infty}(0,1), L_{q,\infty}(0,1))$. Since for all $\varepsilon > 0$ we have that
	\begin{equation*}
		\norm{\lambda_0\chi_{(0,\varepsilon)}}_{L_{r,\infty}} = 1,
	\end{equation*}
	so $\lambda_0$ is not order continuous.
	
	Next, we will show that the multiplication operator $M_{\lambda_0} \colon L_{p,\infty}(0,1) \rightarrow L_{q,\infty}(0,1)$ is not weakly compact.
	For this, take $g(t) = t^{-1/p} \in L_{p,\infty}(0,1)$. Observe that to achieve the intended goal it is enough to show that
	\begin{equation} \label{functional yes}
		\lim\limits_{n \rightarrow \infty} \langle \lambda_0 g r_n, \varphi \rangle = 0 \quad \text{ for every } \quad \varphi \in (L_{q,\infty}(0,1))^{\times}
	\end{equation}
	and, moreover, to construct a linear functional, say $\psi$, in $(L_{q,\infty}(0,1))^*$ such that
	\begin{equation} \label{functional no}
		\langle \lambda_0 g r_n, \psi \rangle = \frac{q}{q-1} \quad \text{ for all } \quad n \in \mathbb{N},
	\end{equation}
	where $\{r_n\}_{n=1}^{\infty}$ is the sequence of Rademacher functions.
	In fact, if this can be done, then the image of the sequence $\{g r_n\}_{n=1}^{\infty} \subset L_{p,\infty}$ through the operator $M_{\lambda_0}$, that is,
	the sequence $\{\lambda_0 g r_n\}_{n=1}^{\infty} \subset L_{q,\infty}$, does not possess a weakly convergent subsequence (if it did, then because the intersection
	of kernels of all functionals from $(L_{q,\infty}(0,1))^{\times}$ is zero, this hypothetical subsequence would have to converge weakly to zero, which in view of
	(\ref{functional no}) is just impossible). But this would exactly mean that the operator $M_{\lambda_0} \colon L_{p,\infty}(0,1) \rightarrow L_{q,\infty}(0,1)$
	is not weakly compact.
	
	First, take $h \in (L_{q,\infty}(0,1))^{\times}$ and put
	\begin{equation*}
		\langle f, \varphi \rangle \coloneqq \int_0^1 f(t)h(t)dt \quad \text{ for } \quad f \in L_{q,\infty}(0,1).
	\end{equation*}
	Because of Lemma~\ref{LEMMA: Rademachers} (from Section~\ref{SUBSEC: Rademachers}) and the fact that $\lambda_0(t)g(t) = t^{-1/q} \in L_{q,\infty}(0,1)$ we infer that
	\begin{equation*}
		\langle \lambda_0 g r_n, \varphi \rangle = \int_0^1 \underbrace{\lambda_0(t)g(t)}_{\in L_{q,\infty}} r_n(t) \underbrace{h(t)}_{\in (L_{q,\infty})^{\times}}dt
			\rightarrow 0 \quad (\text{as $n$ goes to infinity}).
	\end{equation*}
	This proves our first claim (\ref{functional yes}).

	Second, let $\mathscr{U}$ be a free ultrafilter on $\mathbb{N}$. For $f \in L_{q,\infty}(0,1)$ put
	\begin{equation*}
		\langle f, \psi \rangle \coloneqq \lim\limits_{\mathscr{U}} \left[ 2^{n\left( 1-\frac{1}{q} \right)} \int_0^{2^{-n}} f(t)dt \right].
	\end{equation*}
	It is not hard to see that $\psi$ is bounded linear functional on $L_{q,\infty}(0,1)$. Indeed, it follows from Hardy--Littlewood's inequality
	(see, e.g., \cite[Theorem~2.2, p.~44]{BS88}) that
	\begin{align*}
		\abs{2^{n\left( 1-\frac{1}{q} \right)} \int_0^{2^{-n}} f(t)dt}
			& \leqslant 2^{n\left( 1-\frac{1}{q} \right)} \int_0^{2^{-n}} f^{\star}(t)dt \\
			& \leqslant \norm{f}_{L_{q,\infty}} 2^{n\left( 1-\frac{1}{q} \right)} \int_0^{2^{-n}} \frac{dt}{t^{\frac{1}{q}}} \\
			& \leqslant \frac{\norm{f}_{L_{q,\infty}}}{1 - \frac{1}{q}},
	\end{align*}
	where the second inequality is due to the fact that $f^{\star}(t) \leqslant \norm{f}_{L_{q,\infty}} t^{-1/q}$ for all $f \in L_{q,\infty}(0,1)$.
	Now, it remains only to note that for all $n \in \mathbb{N}$ there exists a big enough $N \in \mathbb{N}$ with
	\begin{align*}
		2^{N\left( 1-\frac{1}{q} \right)} \int_0^{2^{-N}} \lambda_0(t)g(t)r_n(t)dt
			& = 2^{N\left( 1-\frac{1}{q} \right)} \int_0^{2^{-N}} t^{-\frac{1}{q}}r_n(t)dt \\
			& = 2^{N\left( 1-\frac{1}{q} \right)} \int_0^{2^{-N}} t^{-\frac{1}{q}}dt \\
			& = \frac{q}{q-1}.
	\end{align*}
	Hence, as a consequence of the definition of functional $\psi$, we have
	\begin{equation*}
		\langle \lambda_0 g r_n, \psi \rangle = \frac{q}{q-1} \quad \text{ for all } \quad n \in \mathbb{N}.
	\end{equation*}
	That is all we needed.
	\demo
\end{example}

\begin{example}[Multipliers between Lorentz spaces $L_{p,q}$] \label{EXP: Lorentz symbol not OC but WC}
	Let $1 < p < q < \infty$. Again, in view of \cite[Theorem~4]{KLM19}, we have
	\begin{equation*}
		M(L_p(0,1),L_{p,q}(0,1)) = L_{\infty}(0,1).
	\end{equation*}
	Moreover, it follows from \cite[Proposition~5.1]{FJT75} that the Lorentz function spaces $L_{p,q}$ are $q$-DH, but since $q > 1$, they are not $1$-DH.
	Of course, the ideal $M(L_p(0,1),L_{p,q}(0,1))_o$ is trivial. On the other hand, due to the reflexivity of the space $L_p(0,1)$, each multiplication
	operator $M_{\lambda} \colon L_p(0,1) \rightarrow L_{p,q}(0,1)$ is weakly compact (see, e.g., \cite[p.~348]{AK06}). Therefore, in general, the multiplication
	operator can be weakly compact even though its symbol is not order continuous.
	\demo
\end{example}

\begin{remark}[About disjointly homogeneous K{\" o}the sequence spaces]
	It should also be added that the class of disjointly homogeneous K{\" o}the sequence spaces is not particularly rich, because, for example (and in
	opposition to the function case presented above), Orlicz spaces $\ell_M$ and Lorentz spaces $\ell_{p,q}$ are, excluding trivial cases,
	not disjointly homogeneous at all. This is because basic sequences in K{\" o}the sequence spaces are usually equivalent to disjoint sequences.
	The culprit for this are the echoes of the Bessaga--Pe{\l}czy{\' n}ski selection principle (cf. \cite[Proposition~3.1.10, p.~14]{AK06}).
	\demo
\end{remark}

\subsection{Two sides of the same coin}
Typically, (some) operator theoretic aspects of the multiplication operator $M_{\lambda} \colon X \rightarrow Y$ (that is, treating $M_{\lambda} \colon X \rightarrow Y$
as a member of $\mathscr{L}(X,Y)$) are reflected in function theoretic properties of its symbol $\lambda$ (which, of course,
live a fully-fledged life in $M(X,Y)$). Here, we aim to study this correspondence in the context specified by the next

\begin{theorem} \label{Proposition: odleglosci = odleglosci}
	{\it Let $X$ and $Y$ be two K{\" o}the spaces with the Fatou property both defined on the same $\sigma$-finite measure
		space $(\Omega,\Sigma,\mu)$.}
	\begin{enumerate}
		\item [(i)] {\it Suppose $(\Omega,\Sigma,\mu)$ is purely atomic. Assume further that either the space $X$ is reflexive, the space $Y$ is order continuous or the space $Y$ is rearrangement invariant. Then}
		\begin{equation*}
			\text{dist}_{\mathscr{L}(X,Y)}(M_{\lambda} \colon X \rightarrow Y, {\mathscr K}(X,Y)) = \text{dist}_{M(X,Y)}(\lambda, M(X,Y)_o).
		\end{equation*}
		\item [(ii)] {\it Suppose $(\Omega,\Sigma,\mu)$ is non-atomic. Assume further that the space $Y$ is} $1$-DH {\it and every simple
			function in $M(X,Y)$ has an absolutely continuous norm. Then}
	\end{enumerate}
	\begin{equation*}
		\text{dist}_{\mathscr{L}(X,Y)}(M_{\lambda} \colon X \rightarrow Y, {\mathscr W}(X,Y)) = \text{dist}_{M(X,Y)}(\lambda, M(X,Y)_o).
	\end{equation*}
\end{theorem}
\begin{proof}
	{\bf (i)} First of all, there is no doubt in assuming that $M(X,Y)_o \neq M(X,Y)$, because if both spaces $M(X,Y)_o$
	and $M(X,Y)$ are equal, then it follows from Theorem~\ref{THM: Essential norm Kothe sequence} and Theorem~\ref{thm: ess norm ri},
	that every multiplication operator $M_{\lambda}$ acting between $X$ and $Y$ is compact. Thus,
	\begin{equation*}
		\text{dist}_{\mathscr{L}(X,Y)}(M_{\lambda} \colon X \rightarrow Y, {\mathscr K}(X,Y)) = 0 = \text{dist}_{M(X,Y)}(\lambda, M(X,Y)_o).
	\end{equation*}
	
	Now, let us begin the proof with the following observation\footnote{This result can be seen as a spin-off from Theorem~3.2 in \cite{KKT22}.
	However, in the mentioned article we were more interested in the rearrangement invariant structure. Morally speaking, it may be instructive
	to compare our Theorem~\ref{THM: Essential norm Kothe sequence} and Theorem~\ref{THM: weak ess norm} with Theorem~3.4 from \cite{KKT22}.}
	which seems quite interesting in itself.
	
	\vspace{5pt} \noindent {\bf Claim~$\clubsuit$.}
	{\it Let $Z$ be a K{\" o}the sequence space. Suppose that the ideal $Z_o$ is non-trivial.
	Then, for $z = \{z_i\}_{i=1}^{\infty} \in Z$, we have}
	\begin{equation}
		\text{dist}_Z(z,Z_o) = \lim\limits_{n \rightarrow \infty} \norm{ \sum_{i=n}^{\infty} z_i e_i }_Z.
	\end{equation}
	\vspace{5pt}
	\noindent
	{\bf Proof of Claim $\clubsuit$.}
		Using the ideal property of the space $Z$ it is straightforward to see that 
		\begin{align*}
			\norm{z - y}_Z
				& \geqslant \lim\limits_{n \rightarrow \infty} \norm{ \sum_{i=n}^{\infty} (z_i - y_i) e_i }_Z \\
				& \geqslant \lim\limits_{n \rightarrow \infty} \abs{\norm{ \sum_{i=n}^{\infty} z_i e_i }_Z - \norm{ \sum_{i=n}^{\infty} y_i e_i }_Z},
		\end{align*}
		where the second inequality follows from the reverse triangle inequality. Consequently, for $y \in Z_o$,
		we have $\lim_{n \rightarrow \infty} \norm{ \sum_{i=n}^{\infty} y_i e_i }_Z = 0$, which means that
		\begin{equation*}
			\text{dist}_Z(z,Z_o)
				= \inf\{\norm{z - y}_Z \colon y \in Z_o\}
				\geqslant \lim\limits_{n \rightarrow \infty} \norm{ \sum_{i=n}^{\infty} z_i e_i }_Z.
		\end{equation*}
		
		To prove the opposite inequality just note that
		\begin{align*}
			\text{dist}_Z(z,Z_o)
				& = \inf\{\norm{z - y}_Z \colon y \in Z_o\} \\
				& \leqslant \lim\limits_{n \rightarrow \infty} \norm{z - \sum_{i=1}^{n-1} z_i e_i }_Z \\
				& = \lim\limits_{n \rightarrow \infty} \norm{ \sum_{i=n}^{\infty} z_i e_i }_Z,
		\end{align*}
		where the first inequality is due to the fact\footnote{Even a slightly more general statement is true, namely, if $Z$ is a K{\" o}the sequence space
		on $(\Omega,\Sigma,\mu)$ and $A \in \Sigma$ has finite measure, then $z\chi_{A} \in Z_o$ for every $z \in Z$. In other words, $c_{00} \subset Z_o$.}
		that $\{ x_i \}_{i=1}^{n-1} \in Z_o$. Thus we have completed the proof of Claim $\clubsuit$.
	\hfill $\blacksquare$
	\vspace{5pt}
	\noindent
	
	Let's go back to the proof of the theorem. Using Theorems~\ref{THM: Essential norm Kothe sequence} and \ref{thm: ess norm ri}
	once again together with the above Claim $\clubsuit$, but for the space  $M(X,Y)$ instead of $Z$, we have
	\begin{align*}
		\text{dist}_{\mathscr{L}(X,Y)}({M_{\lambda} \colon X \rightarrow Y, {\mathscr K}(X,Y)}) & = \norm{M_{\lambda} \colon X \rightarrow Y}_{ess} \\
		& = \lim\limits_{n \rightarrow \infty} \norm{ \sum_{i=n}^{\infty} \lambda_i e_i }_{M(X,Y)} \\
		& = \text{dist}_{M(X,Y)}({\lambda,(M(X,Y))_o}).
	\end{align*}
	That's all.
	
	{\bf (ii)} The proof of the second part is very similar. It requires two ingredients. First, if $M(X,Y)_o = M(X,Y)$,
	then, as Theorem~\ref{THM: weak ess norm} teaches us, every multiplication operator $M_{\lambda}$ between $X$ and $Y$ is weakly compact,
	so we have the equality
	\begin{equation*}
		\text{dist}_{\mathscr{L}(X,Y)}(M_{\lambda} \colon X \rightarrow Y, {\mathscr W}(X,Y)) = 0 = \text{dist}_{M(X,Y)}(\lambda, M(X,Y)_o).
	\end{equation*}
	Therefore, without loss of the generality we can assume that $M(X,Y)_o \neq M(X,Y)$. Next, we need an analogue of Claim~$\clubsuit$
	in the following form
	
	\vspace{5pt} \noindent {\bf Claim~$\heartsuit$.}
	{\it Let $Z$ be a K{\" o}the function space defined on a $\sigma$-finite measure space $(\Omega,\Sigma,\mu)$.
		Suppose that every simple function in $Z$ has an absolutely continuous norm. Then}
		\begin{equation}
			\text{dist}_Z(z,Z_o) = \lim\limits_{n \rightarrow \infty} \norm{z \chi_{\mathcal{U}_n(\lambda)}}_Z,
		\end{equation}
	{\it where, as in Theorem~\ref{THM: weak ess norm},}
		\begin{equation*}
			\mathcal{U}_n(\lambda) \coloneqq \bigcup\limits_{N\geqslant n}\Omega_N
				\cup \left\{ \omega \in \supp(\lambda) \colon \abs{\lambda(\omega)} > n \right\} \text{\it for } n \in \mathbb{N}
		\end{equation*}
	{\it and $\{\Omega_N\}_{N=1}^{\infty}$ is a decomposition of $\Omega$ into a pairwise disjoint family of sets of finite measure.}
	
	\vspace{5pt} We will omit the proof of this fact, as it does not differ significantly from that of Claim~$\clubsuit$. Finally, using
	Theorem~\ref{THM: weak ess norm} in combination with Claim $\heartsuit$ we complete the proof.
\end{proof}

\begin{corollary}[Weak essential norm of multiplication operator between rearrangement invariant function spaces] \label{COR: ess r.i. function}
	{\it Let $X$ and $Y$ be two rearrangement invariant function spaces with the Fatou property.
	Suppose that the space $Y$ is} $1$-DH {\it and that the ideal $M(X,Y)_o$ is non-trivial. Then}
	\begin{equation} \label{EQ: weak ess norm in r.i.}
		\norm{M_{\lambda} \colon X \rightarrow Y}_{w} = \lim\limits_{n \rightarrow \infty} \norm{\lambda^{\star} \chi_{\left( 0,\frac{1}{n} \right) \cup (n, \infty)}}_{M(X,Y)}.
	\end{equation}
\end{corollary}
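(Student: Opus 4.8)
The plan is to reduce the weak essential norm to a distance inside the function space $M(X,Y)$ and then to evaluate that distance through the decreasing rearrangement. First I would record the structural facts I intend to use: since $X$ and $Y$ are rearrangement invariant function spaces, so is $Z \coloneqq M(X,Y)$, and it inherits the Fatou property from $Y$; moreover its order continuous part $Z_o = M(X,Y)_o$ is again rearrangement invariant, this being the standard fact that in an r.i.\ space $f \in Z_o$ if and only if $f^* \in Z_o$. With this at hand I would upgrade the hypothesis of the corollary: if $Z_o \neq \{0\}$, pick $0 \neq z \in Z_o$; then $z^* \in Z_o$ dominates $\delta\chi_{(0,a)}$ for some $\delta, a > 0$, so $\chi_{(0,a)} \in Z_o$ by the ideal property, and writing any finite initial segment as a finite disjoint sum of pieces of length $\leqslant a$ together with the rearrangement invariance of $Z_o$ shows that \emph{every} simple function in $M(X,Y)$ has an absolutely continuous norm. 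Hence the hypotheses of Theorem~\ref{THM: weak ess norm} and of Proposition~\ref{Proposition: odleglosci = odleglosci}(ii) are met, and the latter (whose left-hand side is, by definition, the weak essential norm) gives
\[
	\norm{M_{\lambda}\colon X\to Y}_w = \text{dist}_{M(X,Y)}(\lambda, M(X,Y)_o).
\]

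Next, because both $Z$ and $Z_o$ are rearrangement invariant, the distance $\text{dist}_Z(\lambda, Z_o)$ depends on $\lambda$ only through $\lambda^*$; concretely one may realise $\abs{\lambda}$ as $\lambda^*\circ\tau$ for a measure-preserving $\tau$ via Ryff's theorem (already invoked above) and note that $g \mapsto g\circ\tau$ is an isometry of $Z$ preserving order continuity, whence $\text{dist}_Z(\lambda, Z_o) = \text{dist}_Z(\lambda^*, Z_o)$. I would then apply Claim~$\heartsuit$ from the proof of Proposition~\ref{Proposition: odleglosci = odleglosci} to the element $\lambda^*$ and to the decomposition $\Omega_N = (N-1,N)$ (on the semi-axis; on the unit interval one uses instead the finite-measure form of $\mathcal{U}_n$ noted in Theorem~\ref{THM: weak ess norm}). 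Since $\lambda^*$ is non-increasing, $\{\lambda^* > n\} = (0,a_n)$ with $a_n = \mu\{\abs{\lambda}>n\}$, so $\mathcal{U}_n(\lambda^*) = (0,a_n)\cup(n-1,\infty)$ and Claim~$\heartsuit$ yields
\[
	\text{dist}_Z(\lambda^*, Z_o) = \lim_{n\to\infty}\norm{\lambda^*\chi_{(0,a_n)\cup(n-1,\infty)}}_{M(X,Y)}.
\]

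It remains to replace the $\lambda$-dependent sets $(0,a_n)\cup(n-1,\infty)$ by the canonical sets $(0,\tfrac1n)\cup(n,\infty)$ of the statement, and this matching is the only genuinely delicate point. Both are decreasing families with intersection of measure zero, so the two sequences of norms decrease and I only need to compare their limits. Since $a_n = \mu\{\abs{\lambda}>n\}\downarrow 0$, for each fixed $n$ I can choose $m$ so large that $a_m \leqslant \tfrac1n$ and $m-1\geqslant n$; then $(0,a_m)\cup(m-1,\infty)\subseteq(0,\tfrac1n)\cup(n,\infty)$, which gives the inequality ``$\leqslant$'' after passing to the limit. For the reverse inequality I would split into cases: if $\lambda^*$ is unbounded then $a_n > 0$ for every $n$ and the symmetric choice of indices produces the opposite inclusion; if $\lambda^*$ is bounded then the sets $(0,a_n)$ eventually vanish, and I would instead control the near-zero part directly by $\norm{\lambda^*\chi_{(0,1/n)}}_Z \leqslant \norm{\lambda^*}_{\infty}\,\norm{\chi_{(0,1/n)}}_Z \to 0$, where the convergence uses $\chi_{(0,1)}\in Z_o$ established in the first paragraph. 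Combining the two displays with this squeeze delivers exactly
\[
	\norm{M_{\lambda}\colon X\to Y}_w = \lim_{n\to\infty}\norm{\lambda^*\chi_{(0,\frac1n)\cup(n,\infty)}}_{M(X,Y)}.
\]
The main obstacle is thus not a single deep ingredient but the bookkeeping in this final cofinality step, and in particular the bounded case, where the order continuity of initial-segment characteristic functions is precisely what rescues the contribution near zero.
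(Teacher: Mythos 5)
Your opening reduction and your closing cofinality argument are both sound, and they track the paper's own proof quite closely: the paper likewise deduces from the non-triviality of $M(X,Y)_o$ (via the dichotomy in \cite[Theorem~5.5]{BS88} and the rearrangement invariance of $M(X,Y)$) that every simple function has absolutely continuous norm, and then invokes Proposition~\ref{Proposition: odleglosci = odleglosci}(ii) to write $\norm{M_{\lambda}\colon X \rightarrow Y}_w = \text{dist}_{M(X,Y)}(\lambda, M(X,Y)_o)$. The genuine gap is your middle step, the identity $\text{dist}_{Z}(\lambda, Z_o) = \text{dist}_{Z}(\lambda^*, Z_o)$ for $Z = M(X,Y)$: this is exactly the point the paper flags as \enquote{completely not obvious} and settles by citing \cite[Corollary~3.7]{KKT22} rather than by a short argument. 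Your Ryff-plus-isometry reasoning does not prove it. The composition operator $C_\tau \colon g \mapsto g\circ\tau$ is an isometry of $Z$ over $(0,\mu(\Omega))$ \emph{into} $Z$ over $\Omega$, not onto, and it carries $Z_o$ into $Z_o$; competing only against approximants of the form $w\circ\tau$ therefore yields $\text{dist}_Z(\lambda, Z_o) \leqslant \inf_{w \in Z_o}\norm{\abs{\lambda} - w\circ\tau}_Z = \text{dist}_Z(\lambda^*, Z_o)$, i.e.\ only the inequality \enquote{$\leqslant$} in (\ref{EQ: weak ess norm in r.i.}). The reverse inequality --- the one that gives the lower bound $\norm{M_{\lambda}\colon X \rightarrow Y}_w \geqslant \lim_{n}\norm{\lambda^*\chi_{(0,1/n)\cup(n,\infty)}}_{M(X,Y)}$ --- does not follow, because a competitor $y \in Z_o(\Omega)$ need not be of the form $w\circ\tau$, so the infimum on the $\Omega$ side runs over a strictly larger set than the image $C_\tau(Z_o)$. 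Closing this direction requires a further non-trivial ingredient: for instance, the fact that $y \in Z_o$ implies $y^* \in Z_o$ \emph{combined with} the contraction property $\norm{\lambda^* - y^*}_Z \leqslant \norm{\abs{\lambda} - y}_Z$ (which rests on the submajorization $\abs{f^* - g^*} \prec\prec \abs{f-g}$ and the exact Calder{\' o}n--Mityagin interpolation property of r.i.\ spaces with the Fatou property), or simply the result of \cite[Corollary~3.7]{KKT22} that the paper uses.

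A secondary, but real, defect in the same step: Ryff's theorem as cited (\cite[Theorem~7.5]{BS88}) requires $\lambda^*(\infty) = 0$, and you never verify this. It can fail for $\lambda \in M(X,Y)$ --- for example whenever $X \hookrightarrow Y$, so that $L_\infty \hookrightarrow M(X,Y)$ and $M(X,Y)$ contains functions not vanishing at infinity --- and in that case even the \enquote{easy} half of your isometry argument needs to be repaired (say, by first splitting off the part of $\lambda$ below its limit at infinity). By contrast, your final matching of the sets $(0,a_n)\cup(n-1,\infty)$ with $(0,\tfrac1n)\cup(n,\infty)$, including the bounded case handled via $\norm{\chi_{(0,1/n)}}_Z \rightarrow 0$, is correct as written; the proof fails earlier than that.
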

\begin{proof}
	First of all, in the class of rearrangement invariant spaces we essentially have only two possibilities, namely, either the ideal of order continuous
	functions is trivial or it is equal to the closure of simple functions (see \cite[Theorem~5.5, pp.~67--68]{BS88} and \cite[Theorem~B]{KT17}). Hence,
	since the space $M(X,Y)$ is also rearrangement invariant (see \cite[Theorem~2.2(i)]{KLM12}) and, thanks to our assumptions, the ideal $M(X,Y)_o$ is non-trivial,
	so every simple function in $M(X,Y)$ has an absolutely continuous norm. Consequently, the use of Proposition~\ref{Proposition: odleglosci = odleglosci}(ii)
	is justified, and we have the following identifications
	\begin{equation*}
		\norm{M_{\lambda} \colon X \rightarrow Y}_{w} = \text{dist}_{\mathscr{L}(X,Y)}(M_{\lambda} \colon X \rightarrow Y, {\mathscr W}(X,Y))
			= \text{dist}_{M(X,Y)}(\lambda, M(X,Y)_o).
	\end{equation*}
	On the other hand, due \cite[Corollary~3.7]{KKT22}, we have
	\begin{equation*}
		\text{dist}_{M(X,Y)}(\lambda, M(X,Y)_o) = \lim\limits_{n \rightarrow \infty} \norm{\lambda^{\star} \chi_{\left( 0,\frac{1}{n} \right) \cup (n, \infty)}}_{M(X,Y)}.
	\end{equation*}
	Combining the above two equalities completes the proof.
\end{proof}

\section{{\bf Some applications}} \label{SECTION: Applications}

In \cite{RFRSSB19} Ramos-Fern{\' a}ndez, Rivera-Sarmiento and Salas-Brown computed the essential norm of multiplier $M_{\lambda}$
acting on the classical Ces{\` a}ro sequence spaces $ces_p$ into itself. They showed that
\begin{equation*}
	\norm{M_{\lambda} \colon ces_p \rightarrow ces_p}_{ess} = \limsup\limits_{n \rightarrow \infty} \abs{\lambda_n},
\end{equation*}
where the symbol $\lambda = \{\lambda_n\}_{n=1}^{\infty}$ belongs to $\ell_{\infty}$. In addition, they write the following
\begin{quote}
	{\it \enquote{Thus, it is natural to ask if the result obtained in this article is also valid for multiplication operator acting on any K{\" o}the sequence space $X$.}}
\end{quote}
In this section, we are interested not only in answering the above question, but we want to consider the non-algebraic version
of the above problem. Moreover, we will compute the essential norm of Fourier multipliers acting from a Banach space of analytic
functions into a separable K{\" o}the sequence space.

\subsection{Banach sequence spaces related to decreasing functions}

The {\bf abstract\footnote{They owe the nickname \enquote{abstract} due to Le{\' s}nik and Maligranda, who proposed this nomenclature in \cite{LM15a}.}
Ces{\' a}ro sequence space} $CX$ is most often defined as the {\it optimal domain} for Hardy's operator
$\mathscr{H} \colon x \rightsquigarrow \left[ n \rightsquigarrow \frac{1}{n} \sum_{i=1}^n \abs{x_n} \right]$,
that is, as the largest (in the sense of inclusion) Banach sequence space with the property that the operator $\mathscr{H}$
is bounded when acting into $X$. Explicitly, the space $CX$ is the collection of all sequences $x = \{x_n\}_{n=1}^{\infty}$
such that the norm
$\norm{x}_{CX} \coloneqq \norm{ \left\{ \frac{1}{n} \sum_{i=1}^n \abs{x_n} \right\}_{n=1}^{\infty} }_X$
is finite.

Abstract Ces{\' a}ro sequence spaces can be viewed as a direct generalization of the {\bf classical Ces{\` a}ro sequence spaces $ces_p \coloneqq C\ell_p$}
(see \cite{AM14} and references given there; see also Bennett's memoir \cite{Ben96} and Grosse-Erdmann book \cite{GE98}).
Moreover, they are closely related to Sinnamon's construction of the so-called {\it down spaces $X^{\downarrow}$} (see \cite{KMS07}, \cite{LM15a}, \cite{MS06} and \cite{Si94} for more).

Nowadays, the structure of these spaces, especially in its most general form, enjoys unflagging interest
(see, for example, \cite{ALM19}, \cite{CR16}, \cite{DS07}, \cite{KT24}, \cite{KKT22}, \cite{LM15a}, \cite{LM16}, \cite{MS06}, \cite{NP10} and their references).
More studies on the structure of the optimal domains can be found, for example, in \cite{Mas91} and
Okada, Ricker and S{\' a}nchez P{\' e}rez's monograph \cite{ORS08} and their references.

In a sense, a dual construction to the construction of Ces{\` a}ro sequence spaces are the Tandori sequence spaces.
For a K{\" o}the sequence space $X$, by the corresponding {\bf abstract Tandori sequence space $\widetilde{X}$} we understand a vector space
\begin{equation*}
	\widetilde{X} \coloneqq \left\{ x \colon \widetilde{x} \in X \right\}
\end{equation*}
furnished with the norm $\norm{x}_{\widetilde{X}} \coloneqq \norm{\widetilde{x}}_X$, where $\widetilde{x} = \left\{ \widetilde{x}_n \right\}_{n=1}^{\infty}$
is the {\bf least decreasing majorant} of a given sequence $x = \{x_n\}_{n=1}^{\infty}$ defined as
\begin{equation*}
	\widetilde{x} \coloneqq \left\{ \sup\limits_{m \geqslant n} \abs{x_m} \right\}_{n=1}^{\infty}.
\end{equation*}

The name of these spaces was proposed in \cite[p.~767]{LM16}, where the authors refer to K{\' a}roly Tandori's paper \cite{Tan55} from 1954.
However, despite being nameless, these spaces were investigated even earlier (see \cite{AM14} and \cite{LM15a}).
Their significance stems mostly from the following reasons, which we will only indicate here in the form of slogans with
appropriate references to the literature:
\begin{itemize}
	\item[$\bullet$] Halperin's $D$-type H{\" o}lder--Rogers inequality (see \cite{Hal53}),
	\item[$\bullet$] Sawyer's duality formula (see \cite{Saw90}),
	\item[$\bullet$] Halperin's and Sinnamon's level functions (see \cite{Hal53} and \cite{Si94}),
	\item[$\bullet$] Hardy's inequality (see \cite{Ste93}).
\end{itemize}

Let us start with the following preparatory result.

\begin{lemma}[Separable part of Tandori sequence spaces] \label{LEMMA: OC w Tandori}
	{\it Let $X$ be a K{\" o}the sequence space. Then}
	\begin{equation*}
		( \widetilde{X} )_o = \widetilde{X_o}.
	\end{equation*}
\end{lemma}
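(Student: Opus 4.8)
The plan is to reduce both inclusions to the tail characterization of order continuity recorded in the Toolbox: for a K{\"o}the sequence space $Z$ one has $z\in Z_o$ if and only if $\lim_{n\to\infty}\norm{z\chi_{\{n,n+1,\ldots\}}}_Z=0$. The computation that drives everything is the pointwise description of the majorant of a tail. For $x\in\widetilde{X}$ one checks directly that $\bigl(\widetilde{x\chi_{\{n,n+1,\ldots\}}}\bigr)_k=\widetilde{x}_{\max(k,n)}$, which rearranges into the decomposition
\[
\widetilde{x\chi_{\{n,n+1,\ldots\}}}
 =\widetilde{x}\,\chi_{\{n,n+1,\ldots\}}
 +\widetilde{x}_{n}\,\chi_{\{1,\ldots,n-1\}} .
\]
Since $\norm{x\chi_{\{n,n+1,\ldots\}}}_{\widetilde{X}}=\norm{\widetilde{x\chi_{\{n,n+1,\ldots\}}}}_X$, the two summands are exactly a \emph{tail} of $\widetilde{x}$ and a \emph{plateau} of constant height $\widetilde{x}_n$; I would then treat the two inclusions separately.

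The inclusion $(\widetilde{X})_o\subseteq\widetilde{X_o}$ is the routine one and needs no extra hypothesis. From the decomposition one reads off the domination $\widetilde{x}\,\chi_{\{n,n+1,\ldots\}}\leqslant\widetilde{x\chi_{\{n,n+1,\ldots\}}}$, so the ideal property of $X$ gives $\norm{\widetilde{x}\,\chi_{\{n,n+1,\ldots\}}}_X\leqslant\norm{x\chi_{\{n,n+1,\ldots\}}}_{\widetilde{X}}$. Hence if $x\in(\widetilde{X})_o$ the right-hand side tends to $0$, forcing $\norm{\widetilde{x}\,\chi_{\{n,n+1,\ldots\}}}_X\to 0$, and by the tail criterion $\widetilde{x}\in X_o$, i.e. $x\in\widetilde{X_o}$.

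The reverse inclusion $\widetilde{X_o}\subseteq(\widetilde{X})_o$ is where the work lies. Assuming $\widetilde{x}\in X_o$, the triangle inequality applied to the decomposition yields $\norm{x\chi_{\{n,n+1,\ldots\}}}_{\widetilde{X}}\leqslant\norm{\widetilde{x}\,\chi_{\{n,n+1,\ldots\}}}_X+\widetilde{x}_n\,\phi_X(n-1)$, where $\phi_X(m)\coloneqq\norm{\chi_{\{1,\ldots,m\}}}_X$ is the fundamental function. The tail term vanishes because $\widetilde{x}\in X_o$, but the \emph{plateau term} $\widetilde{x}_n\,\phi_X(n-1)$ is the main obstacle: order continuity controls the tails of $\widetilde{x}$ yet says nothing a priori about the product of its (slowly decaying) height $\widetilde{x}_n$ with the (possibly growing) norm $\phi_X(n)$. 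I would defeat it by testing order continuity of $\widetilde{x}$ on a middle block. With $m=\lceil n/2\rceil$, monotonicity of $\widetilde{x}$ gives $\widetilde{x}_n\,\phi_X(n-m+1)\leqslant\norm{\widetilde{x}\,\chi_{\{m,\ldots,n\}}}_X\leqslant\norm{\widetilde{x}\,\chi_{\{m,m+1,\ldots\}}}_X$, and quasiconcavity of the fundamental function ($\phi_X(n)\leqslant 2\,\phi_X(n-m+1)$, since $n-m+1\geqslant n/2$) upgrades this to $\widetilde{x}_n\,\phi_X(n)\leqslant 2\norm{\widetilde{x}\,\chi_{\{m,m+1,\ldots\}}}_X\to 0$ as $n\to\infty$. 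Combining the two estimates gives $\norm{x\chi_{\{n,n+1,\ldots\}}}_{\widetilde{X}}\to 0$, i.e. $x\in(\widetilde{X})_o$, which completes the argument. It should be stressed that precisely this plateau step is where the symmetric structure enters — both that $\norm{\chi_{\{m,\ldots,n\}}}_X$ depends only on the cardinality of the block and that $\phi_X$ is quasiconcave — so some such control on the fundamental function is exactly what the reverse inclusion requires.
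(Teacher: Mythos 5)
Your proof is correct, but it is genuinely not the paper's argument, and the divergence is substantive. The paper's proof rests on the claimed identity $\widetilde{x\chi_{\{n,n+1,\ldots\}}} = \widetilde{x}\,\chi_{\{n,n+1,\ldots\}}$, from which it deduces the equality of tail norms $\norm{x\chi_{\{n,n+1,\ldots\}}}_{\widetilde{X}} = \norm{\widetilde{x}\,\chi_{\{n,n+1,\ldots\}}}_X$ and then reads off both inclusions from the tail criterion. Your pointwise computation $\bigl(\widetilde{x\chi_{\{n,n+1,\ldots\}}}\bigr)_k = \widetilde{x}_{\max(k,n)}$ shows that this identity is false whenever $\widetilde{x}_n > 0$: the majorant of a tail carries the plateau $\widetilde{x}_n\,\chi_{\{1,\ldots,n-1\}}$, which the paper's identity silently discards. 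What survives is only the pointwise inequality $\widetilde{x}\,\chi_{\{n,n+1,\ldots\}} \leqslant \widetilde{x\chi_{\{n,n+1,\ldots\}}}$, which yields the inclusion $\bigl(\widetilde{X}\bigr)_o \subseteq \widetilde{X_o}$ (your easy direction) but not the converse. Your middle-block estimate killing the plateau --- using that $\norm{\chi_{\{m,\ldots,n\}}}_X$ depends only on the cardinality $n-m+1$ and that $\phi_X$ is quasiconcave, both standard facts for rearrangement invariant sequence spaces --- is precisely the step the paper's proof is missing.

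Moreover, the caveat at the end of your proposal is not pedantry: the lemma as stated, for arbitrary K{\"o}the sequence spaces, is false, so the gap in the paper's proof cannot be patched without an extra hypothesis. Take $X = \ell_1(w)$ with $w_n = 2^{-n}$, so that $\norm{x}_X = \sum_{n=1}^{\infty}\abs{x_n}2^{-n}$, and take $x = \chi_{\mathbb{N}}$. Then $X$ is order continuous, hence $\widetilde{X_o} = \widetilde{X} \ni \chi_{\mathbb{N}}$; but $\widetilde{\chi_{\{n,n+1,\ldots\}}} = \chi_{\mathbb{N}}$ for every $n$, so $\norm{\chi_{\mathbb{N}}\chi_{\{n,n+1,\ldots\}}}_{\widetilde{X}} = \norm{\chi_{\mathbb{N}}}_X = 1$ for all $n$, and therefore $\chi_{\mathbb{N}} \notin \bigl(\widetilde{X}\bigr)_o$ --- here the plateau is the entire obstruction, exactly as your decomposition predicts. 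Under rearrangement invariance this cannot happen ($\chi_{\{n,n+1,\ldots\}}$ is equimeasurable with $\chi_{\mathbb{N}}$, so $\chi_{\mathbb{N}}$ is never an order continuous element), and your argument goes through. This restricted version also covers every use the paper makes of the lemma: it is only ever applied to $\widetilde{M(X,Y)}$ and $\widetilde{X^{\times}}$ with $X$ and $Y$ rearrangement invariant sequence spaces, and those spaces are again rearrangement invariant.
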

\begin{proof}
	Let us start by recalling the well-known fact: {\it Let $Z$ be a K{\" o}the sequence space. Then}
	\begin{equation} \tag{$\spadesuit$}
		z = \{z_n\}_{n=1}^{\infty} \in Z \textit{ belongs to the ideal } Z_o
			\textit{ if, and only if, } \lim\limits_{n \rightarrow \infty} \norm{ z\chi_{\{n,n+1,...\}} }_Z = 0.
	\end{equation}
	If we keep in mind ($\spadesuit$), the rest of the proof basically follows from the following simple-to-see inequality
	\begin{equation} \label{INEQ: tandori X o = X o tandori}
		\stackon[-8pt]{$x\chi_{\{n, n+1, ...\}}$}{\vstretch{1.5}{\hstretch{2.4}{\widetilde{\phantom{\;\;\;\;\;\;\;\;}}}}} = \widetilde{x}\chi_{\{n, n+1, ...\}}
			\quad \text{ for all } \quad n \in \mathbb{N}.
	\end{equation}
	In fact, after imposing the norm $\norm{\bullet}_X$ on both sides of (\ref{INEQ: tandori X o = X o tandori}), we get
	\begin{equation} \label{EQ: 1}
		\norm{x\chi_{\{n, n+1, ...\}}}_{\widetilde{X}} = \norm{\stackon[-8pt]{$x\chi_{\{n, n+1, ...\}}$}{\vstretch{1.5}{\hstretch{2.4}{\widetilde{\phantom{\;\;\;\;\;\;\;\;}}}}}}_X
			= \norm{\widetilde{x}\chi_{\{n, n+1, ...\}}}_X \quad \text{ for all } \quad n \in \mathbb{N}.
	\end{equation}
	Now, if $x \in ( \widetilde{X} )_o$, then it follows from ($\spadesuit$) that $\lim_{n \rightarrow \infty} \norm{x\chi_{\{n, n+1, ...\}}}_{\widetilde{X}} = 0$.
	Therefore, in view of (\ref{EQ: 1}), also $\lim_{n \rightarrow \infty} \norm{\widetilde{x}\chi_{\{n, n+1, ...\}}}_X = 0$.
	But this exactly means that $\widetilde{x} \in X_o$. Consequently, $x \in \widetilde{X_o}$ and we have the inclusion $( \widetilde{X} )_o \subset \widetilde{X_o}$.
	On the other hand, take $x \in \widetilde{X_o}$. Then $\widetilde{x} \in X_o$, so  $\lim_{n \rightarrow \infty} \norm{\widetilde{x}\chi_{\{n, n+1, ...\}}}_X = 0$.
	However, using again (\ref{EQ: 1}), we see that $\lim_{n \rightarrow \infty} \norm{x\chi_{\{n, n+1, ...\}}}_{\widetilde{X}} = 0$ as well.
	This gives us that $x \in ( \widetilde{X} )_o$ and, in consequence, $( \widetilde{X} )_o \supset \widetilde{X_o}$.
	The proof is complete.
\end{proof}

Later, it will be necessary to use some of the results obtained by the authors in \cite{KT24}.
For this reason, let us briefly recall the following

\begin{construction}
	Let $X$ be a rearrangement invariant sequence space.
	With $X$ we can associate a K{\" o}the sequence space $\mathbf{E}(X)$ defined as a vector space
	\begin{equation*} \label{Def: E(X) function}
		\mathbf{E}(X) \coloneqq \left\{ x = \{x_n\}_{n=0}^{\infty} \colon \sum_{n=0}^{\infty} x_n \chi_{\Delta_n} \in X \right\}
	\end{equation*}
	and equipped with the norm  $\norm{x}_{\mathbf{E}(X)} \coloneqq \norm{ \sum_{n=0}^{\infty} x_n \chi_{\Delta_n} }_X$.
	Here, $\Delta_j$ is the $n^{\text{th}}$ dyadic interval, that is, $\Delta_n \coloneqq \{ 2^{n}, 2^{n}+1, 2^{n+1}-1 \}$ for $n \in \mathbb{N}_0$.
	\demo
\end{construction}

Interestingly, the above construction goes back to Kalton \cite{Ka93a}.
However, in \cite{KT24} it was used to obtain an abstract variant of the so-called Grosse-Erdmann's blocking technique (see \cite{GE98}).

Now we are ready to present and also prove the main result of this section.

\begin{theorem}[Essential norm of multipliers acting between abstract Ces{\` a}ro and Tandori sequence spaces] \label{THM: multipliers Cezaro i Tandori}
	{\it Let $X$ and $Y$ be two rearrangement invariant sequence spaces with the Fatou property.
		Suppose that either the space $X$ is reflexive or the space $Y$ is separable.
		Further, suppose that either the space ${\bf E}(Y)$ factorizes though ${\bf E}(X)$, that is,
		\begin{equation*}
			{\bf E}(Y) = {\bf E}(X) \odot M({\bf E}(X),{\bf E}(Y)),
		\end{equation*}
		or that both spaces $X$ and $Y$ are strongly separated, that is,
		\begin{equation*}
			\alpha_X > \beta_Y.
		\end{equation*}
		Then, we have
		\begin{equation*}
			\norm{M_{\lambda} \colon \widetilde{X} \rightarrow \widetilde{Y}}_{ess} \approx \lim\limits_{n \rightarrow \infty} \norm{\widetilde{\lambda}\chi_{\{n, n+1, ...\}}}_{M(X,Y)}.
		\end{equation*}
		Additionally, if Hardy's operator $\mathscr{H}$ is bounded on both spaces $X$ and $Y$, then also
		\begin{equation*}
			\norm{M_{\lambda} \colon CX \rightarrow CY}_{ess} \approx \lim\limits_{n \rightarrow \infty} \norm{\widetilde{\lambda}\chi_{\{n, n+1, ...\}}}_{M(X,Y)}.
		\end{equation*}
		Moreover, both of the above estimates depend on $X$ and $Y$ only.
		In particular, the multiplication operator $M_{\lambda} \colon \begin{array}{l} CX \rightarrow CY \\ \;\;\: \widetilde{X} \rightarrow \widetilde{Y} \end{array}$
		is compact if, and only if, the function $\widetilde{\lambda} = \left\{ \widetilde{\lambda_n} \right\}_{n=1}^{\infty}$ belongs to the ideal $M(X,Y)_o$.}
\end{theorem}

\begin{proof}
	As a warm-up, let us explain the following, not particularly difficult issues, that will be very useful later.

	(a) Due to our assumption about the boundedness of Hardy's operator $\mathscr{H}$ on $X$ and $Y$, both spaces $CX$ and $CY$ are non-trivial
	(this easily follows from the definition of the Ces{\` a}ro space construction, however, this statement is also a form of Hardy's inequality;
	see~\cite[p.~1]{Ben96} and \cite[pp.~19--21 and 147--159]{KMP07}).
			
	(b) Let $Z$ be a K{\" o}the sequence space with the Fatou property. Then both spaces $CZ$ and $\widetilde{Z}$ have the Fatou property
	(see, for example, \cite[Theorem~1(d)]{LM15a}; although, formally, the proof given in \cite{LM15a} covers only K{\" o}the function spaces,
	it works just as well for their sequence counterparts).
			
	(c) Both spaces $\widetilde{Z}$ and $CZ$ are order continuous provided $Z$ is an order continuous K{\" o}the sequence space
	(this is a direct consequence of Lemma~\ref{LEMMA: OC w Tandori} and a nearly direct consequence of Lemma~11 in \cite{KT17};
	cf. the proof of Proposition~5.1 in \cite{KKT22}).
			
	(d) Lastly, we claim that
		
	\vspace{5pt} \noindent {\bf Claim $\diamondsuit$.}
	{\it Let $Z$ be a reflexive rearrangement invariant sequence space with the Fatou property.
		Suppose that the Hardy's operator $\mathscr{H}$ is bounded on $Z$.
		Then both spaces $\widetilde{Z}$ and $CZ$ are reflexive.}
	
	\vspace{5pt}
	\noindent
	{\bf Proof of Claim $\diamondsuit$}
		Both spaces are related to each other through duality, so we will limit ourselves to the proof in the case of the Ces{\` a}ro space construction.
		
		Since the space $Z$ is reflexive and has the Fatou property, so it follows from \cite[Corollary~4.4, p.~23]{BS88} that both spaces $Z$ and $Z^{\times}$
		are order continuous. Because of (b) and, one more time, \cite[Corollary~4.4, p.~23]{BS88}, to complete the proof it is enough to show that the spaces
		$CZ$ and $(CZ)^{\times}$ are order continuous. Now, in the first case, that is, when we consider $CZ$, it results immediately from (c),
		while in the second case, that is, when we consider $(CZ)^{\times}$, things are a bit more complicated.
		Here, using the Le{\' s}nik and Maligranda duality results from \cite{LM15a}, we have
		\begin{equation*}
			(CX)^{\times} = \widetilde{X^{\times}}
		\end{equation*}
		(see \cite[Theorem~6]{LM15a}). But, with Lemma~\ref{LEMMA: OC w Tandori} in mind, we also have
		\begin{equation*}
			\left( (CX)^{\times} \right)_o = \left( \widetilde{X^{\times}} \right)_o = \widetilde{(X^{\times})_o} = \widetilde{X^{\times}} = (CX)^{\times}.
		\end{equation*}
		This means that also the space $(CX)^{\times}$ is order continuous. That is all we needed to finish the proof of Claim~$\diamondsuit$.
	\hfill $\blacksquare$
	\vspace{5pt}
	\noindent

	Now, the essential part of the proof is based on the following three steps.
	
	{\bf First step.} Due to Theorem~6.a.3 from \cite{KT24} (see also \cite[Theorem~6]{KLM19} and \cite[pp. 67--77]{Ben96}), we have
	\begin{equation} \label{EQ: multiplikatory ktorych nie ma}
		M(CX,CY) = \widetilde{M(X,Y)}
	\end{equation}
	and
	\begin{equation} \label{EQ: multiplikatory tandori}
		M(\widetilde{X},\widetilde{Y}) = \widetilde{M(X,Y)}.
	\end{equation}
	In consequence,
	\begin{equation*}
		\norm{M_{\lambda} \colon \begin{array}{l} CX \rightarrow CY \\ \;\;\: \widetilde{X} \rightarrow \widetilde{Y} \end{array}}
			\approx  \norm{\lambda}_{\widetilde{M(X,Y)}}.
	\end{equation*}

	{\bf Second step.} In view of first step, (a), (b), (c) and (d) there is noting to prevent us from using Theorem~\ref{THM: Essential norm Kothe sequence}.
	Thus, we get the formula
	\begin{equation*}
		\norm{M_{\lambda} \colon  CX \rightarrow CY }_{ess}
			\approx \lim\limits_{n \rightarrow \infty} \norm{\lambda\chi_{\{n, n+1, ...\}}}_{\widetilde{M(X,Y)}}.
	\end{equation*}
	Moreover, this means that the multiplication operator $M_{\lambda} \colon \begin{array}{l} CX \rightarrow CY \\ \;\;\: \widetilde{X} \rightarrow \widetilde{Y} \end{array}$
	is compact if, and only if, $\lambda \in \left( \widetilde{M(X,Y)} \right)_o$.
	However, as shown in Lemma~\ref{LEMMA: OC w Tandori},
	\begin{equation*}
		\left( \widetilde{M(X,Y)} \right)_o = \widetilde{M(X,Y)_o}.
	\end{equation*}
	Therefore,
	\begin{equation*}
		\lambda \in \left( \widetilde{M(X,Y)} \right)_o \quad \text{ if, and only if, } \quad \lambda \in \widetilde{M(X,Y)_o}.
	\end{equation*}
	The latter clearly means that $\widetilde{\lambda} \in M(X,Y)_o$.
	
	{\bf Third step.} Finally, using the formula (\ref{EQ: 1}) form the proof of Lemma~\ref{LEMMA: OC w Tandori}, we have
	\begin{equation*}
		\norm{\lambda\chi_{\{n, n+1, ...\}}}_{\widetilde{M(X,Y)}}
			= \norm{\stackon[-8pt]{$\lambda \chi_{\{n, n+1, ...\}}$}{\vstretch{1.5}{\hstretch{2.4}{\widetilde{\phantom{\;\;\;\;\;\;\;\;}}}}}}_{M(X,Y)}
			= \norm{\widetilde{\lambda} \chi_{\{n, n+1, ...\}}}_{M(X,Y)}.
	\end{equation*}
\end{proof}

Algebraic situation is, of course, much simpler.

\begin{corollary}
	{\it Let $X$ be a separable K{\" o}the sequence space with the Fatou property. Then}
	\begin{equation*}
		\norm{M_{\lambda} \colon \begin{array}{l} CX \circlearrowleft \\ \;\, \widetilde{X} \circlearrowleft \end{array}}_{ess}
			= \limsup\limits_{n \rightarrow \infty} \abs{\lambda_n}.
	\end{equation*}
	{\it In particular, the multiplication operator $M_{\lambda} \colon \begin{array}{l} CX \circlearrowleft \\ \;\, \widetilde{X} \circlearrowleft \end{array}$
		is compact if, and only if, $\lambda \in c_0$.}
\end{corollary}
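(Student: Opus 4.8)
The plan is to recognize this as the purely algebraic situation $Y = X$ and to extract the exact equality directly from Corollary~\ref{COR: compact sequence X to X}, rather than from the two-sided estimate of Theorem~\ref{THM: multipliers Cezaro i Tandori}. The decisive simplification is that domain and target coincide, so the multiplier spaces collapse: $M(\widetilde{X},\widetilde{X}) \equiv \ell_\infty$ and $M(CX,CX) \equiv \ell_\infty$. In particular the factorization hypothesis $X \odot M(X,Y) = Y$ of Theorem~\ref{THM: multipliers Cezaro i Tandori}, read with $Y = X$, degenerates to the tautology $X \odot \ell_\infty = X$ and requires no checking.

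First I would verify that $\widetilde{X}$ and $CX$ meet the standing hypotheses of Corollary~\ref{COR: compact sequence X to X}, namely that they are order continuous K{\" o}the sequence spaces with the Fatou property. This is exactly what ingredients (b) and (c) assembled in the proof of Theorem~\ref{THM: multipliers Cezaro i Tandori} provide: fact (b) (via \cite[Theorem~1(d)]{LM15a}) transfers the Fatou property from $X$ to $\widetilde{X}$ and $CX$, while fact (c) (resting on Lemma~\ref{LEMMA: OC w Tandori}) transfers order continuity. For the Ces{\` a}ro construction this tacitly uses that $CX$ is non-degenerate, which---as in ingredient (a)---is precisely the boundedness of the Hardy operator $\mathscr{H}$ on $X$; should $\mathscr{H}$ fail to be bounded and $CX = \{0\}$, the asserted identity is vacuous.

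With these structural facts in place, the proof collapses to a single application of Corollary~\ref{COR: compact sequence X to X} to the order continuous K{\" o}the sequence space $Z \in \{\widetilde{X}, CX\}$, bearing in mind that a diagonal symbol $\lambda$ acting on $Z$ is governed by $M(Z,Z) \equiv \ell_\infty$. This yields at once
\begin{equation*}
	\norm{M_{\lambda} \colon Z \circlearrowleft}_{ess} = \limsup\limits_{n \rightarrow \infty} \abs{\lambda_n},
\end{equation*}
now as an honest equality. The compactness clause is then immediate: $M_{\lambda}$ is compact precisely when its essential norm vanishes, i.e. when $\limsup_{n} \abs{\lambda_n} = 0$, that is, when $\lambda \in c_0$.

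The one point deserving a word of care is why the equivalence constants of Theorem~\ref{THM: multipliers Cezaro i Tandori} do not intrude here. Had one insisted on routing through that theorem, one would arrive at $\lim_{n} \norm{\widetilde{\lambda}\chi_{\{n,n+1,\ldots\}}}_{M(X,X)}$ and would still need the elementary computation $\norm{\widetilde{\lambda}\chi_{\{n,n+1,\ldots\}}}_{\ell_\infty} = \sup_{m \geqslant n} \widetilde{\lambda}_m = \widetilde{\lambda}_n = \sup_{k \geqslant n}\abs{\lambda_k}$, whose limit is $\limsup_{n}\abs{\lambda_n}$. The detour through the identity $M(CX,CX) = \widetilde{M(X,X)}$ (and its attendant equivalence constants) is simply unnecessary in the algebraic case, which is why Corollary~\ref{COR: compact sequence X to X} applies verbatim and returns an exact norm.
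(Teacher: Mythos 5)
Your proof is correct and follows essentially the same route as the paper's: both reduce the statement to Corollary~\ref{COR: compact sequence X to X} by noting that $M(CX,CX) \equiv M(\widetilde{X},\widetilde{X}) \equiv \ell_\infty$ and that $CX$ and $\widetilde{X}$ inherit order continuity (and the Fatou property) from $X$, which is exactly what the paper invokes via \cite[Lemma~1]{LM16} and remark~(c) from the proof of Theorem~\ref{THM: multipliers Cezaro i Tandori}. Your extra caveats---the degenerate case where $\mathscr{H}$ is unbounded and $CX$ collapses, and the observation that no equivalence constants from Theorem~\ref{THM: multipliers Cezaro i Tandori} intrude in the algebraic case---are sound refinements but not needed beyond the paper's shorter argument.
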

\begin{proof}
	This result is an easy consequence of Corollary~\ref{COR: compact sequence X to X} and (c) from the proof of Theorem~\ref{THM: multipliers Cezaro i Tandori}.
	Indeed, since $M(CX,CX) \equiv \ell_{\infty}$ and the space $CX$ is order continuous (this follows from our assumptions together with \cite[Lemma~1]{LM16};
	cf. \cite[Proposition~5.1]{KKT22}), so
	\begin{equation*}
		\norm{M_{\lambda} \colon CX \circlearrowleft }_{ess}
			= \lim\limits_{n \rightarrow \infty} \norm{\lambda \chi_{\{n, n+1, ...\}}}_{\ell_{\infty}}
			= \lim\limits_{n \rightarrow \infty} \left( \sup\limits_{m \geqslant n} \abs{\lambda_m} \right)
			= \limsup\limits_{n \rightarrow \infty} \abs{\lambda_n}.
	\end{equation*}
	The part concerning Tandori spaces is proved in a completely analogous way.
\end{proof}

Let us close this section with two examples concerning the compact multipliers between Ces{\' a}ro--Orlicz sequence spaces.
For a Young function $M$, by the {\bf Ces{\' a}ro--Orlicz sequence space $ces_M$} we understand here the space $C\ell_M$,
where $\ell_M$ is the well-known Orlicz sequence space.

\begin{example} \label{EXAMPLE: compact cesp -> cesq faktoryzacja}
	Let $M$ and $N$ be two Young functions such that either $M$ or $N$ and its Young conjugate $N^*$
	satisfy the $\Delta_2$-condition for small arguments (see \cite{Mal83} for details).
	Suppose that $\alpha_{\ell_M} > 1$ and $\alpha_{\ell_N} > 1$ (equivalently, we can use the so-called
	Matuszewska--Orlicz indices $\alpha_M$ and $\alpha_N$ here; see \cite[Proposition~2.b.5, p.~139]{LT79}).
	Further, suppose that $M(\ell_M,\ell_N) \neq \ell_{\infty}$.
	Then, due to \cite[Example~6.b.2]{KT24}, we have
	\begin{equation} \label{first situation}
		M(ces_M,ces_N) = \widetilde{\ell_{N \ominus M}}.
	\end{equation}
	Here, the function $N \ominus M$ is the so-called generalized Young conjugate of $M$ with respect to $N$, that is,
	$(N \ominus M)(t) \coloneqq \sup_{t > 0} \left\{ N(ts) - M(s) \right\}$ (see \cite{DR00} and \cite{LT21}).
	In consequence, using Theorem~\ref{THM: multipliers Cezaro i Tandori}, we see that the multiplication operator
	$M_{\lambda} \colon ces_M \rightarrow ces_N$ is compact if, and only if, $\widetilde{\lambda} \in (\ell_{N \ominus M})_o$.
	Moreover, it follows from \cite[Theorem~3.3(a), p.~17]{Mal83}, that the ideal $(\ell_{N \ominus M})_o$ coincide
	with the space of finite elements in $\ell_{N \ominus M}$, that is,
	\begin{equation*}
		(\ell_{N \ominus M})_o
			= \left\{ x = \{x_n\}_{n=1}^{\infty} \colon \sum_{n=1}^{\infty} (N \ominus M)(\eta x_n) < \infty \text{ for all } \eta > 0 \right\}.
	\end{equation*}
	A more classical situation is as follows. Let $1 < p < q < \infty$ with $1/r = 1/p - 1/q$.
	Then, using \eqref{first situation} (see also \cite[p.~69]{Ben96} and \cite[Corollary~6.b.4]{KT24}), it is straightforward to see that
	\begin{equation*}
		M(ces_q, ces_p) = \widetilde{\ell_r}.
	\end{equation*}
	Therefore, the multiplication operator $M_{\lambda} \colon ces_q \rightarrow ces_p$ is compact if, and only if, $\widetilde{\lambda} \in \ell_r$.
	\demo
\end{example}

	\begin{example} \label{Remark: ces_p -> ces_q [ABR18]}
		Let $M$ and $N$ be two Young functions exactly as in Example~\ref{EXAMPLE: compact cesp -> cesq faktoryzacja}
		except that now we assume that $M(\ell_M,\ell_N) = \ell_{\infty}$ (or, which is one thing, $\ell_M \hookrightarrow \ell_N$).
		Again, using \cite[Example~6.b.1]{KT24}, we have
		\begin{equation} \label{second situation}
			M(ces_M,ces_N) = \ell_{\infty}(w_{M \to N}),
		\end{equation}
		where $w_{M \to N}(n) \coloneqq M^{-1}(1/n)/N^{-1}(1/n)$ for $n \in \mathbb{N}$.
		Therefore, this time, the multiplication operator $M_{\lambda} \colon ces_M \rightarrow ces_N$ is compact if, and only if,
		\begin{equation*}
			\left\{ \frac{M^{-1}(1/n)}{N^{-1}(1/n)} \lambda_n \right\}_{n=1}^{\infty} \in c_0.
		\end{equation*}
		In particular, for $1 < p < q < \infty$, we have
		\begin{equation}
			M(ces_p, ces_q) = \ell_{\infty}\left( w_{p \to q} \right),
		\end{equation}
		where $w_{p \to q} = n^{1/q - 1/p}$ for $n \in \mathbb{N}$, so the multiplication operator $M_{\lambda} \colon ces_p \rightarrow ces_q$
		is compact if, and only if, $\left\{ n^{1/q-1/p} \lambda_n \right\}_{n=1}^{\infty} \in c_0$.
		In this way, we recreated Albanese, Bonet and Ricker's result from \cite{ABR18} (precisely, see \cite[Proposition~2.2]{ABR18}).
		\demo
\end{example}

\begin{remark} \label{REMARK: jak wynioskowac Tandori lp -> lq}
	Let $1 < p < q < \infty$ with $1/r = 1/p - 1/q$.
	Using once again \eqref{EQ: multiplikatory tandori} (that is, Theorem~6.a.3 from \cite{KT24}) in tandem with
	Examples~\ref{EXAMPLE: compact cesp -> cesq faktoryzacja} and \ref{Remark: ces_p -> ces_q [ABR18]},
	it is straightforward to see that the multiplication operators
	$M_{\lambda} \colon \widetilde{\ell_q} \rightarrow \widetilde{\ell_p}$ and $M_{\xi} \colon \widetilde{\ell_p} \rightarrow \widetilde{\ell_q}$
	are compact if, and only if, $\widetilde{\lambda} \in \ell_r$ and $\left\{ n^{1/q-1/p} \xi_n \right\}_{n=1}^{\infty} \in c_0$, respectively.
	This reproduces Bonet and Ricker's results from \cite{BR20} (precisely, see Proposition~4.1 and Proposition~4.4 therein).
	\demo
\end{remark}

\subsection{Abstract Ces{\` a}ro function spaces}

Let $X$ is a K{\" o}the function space defined on the unit interval $(0,1)$ or the semi-axis $(0,\infty)$
both equipped with the usual Lebesgue measure.
Then, the {\bf continuous Hardy operator} $\mathscr{H}$ is defined by
$\mathscr{H} \colon f \rightsquigarrow \left[ x \rightsquigarrow \frac{1}{x}\int_0^x f(t)dt \right]$
and the corresponding {\bf abstract Ces{\` a}ro function space $CX$}, is defined as the vector space of all real-valued measurable
functions, say $f$, such that $\mathscr{H}(\abs{f}) \in X$, equipped with the norm $\norm{f}_{CX} \coloneqq \norm{\mathscr{H}(\abs{f})}_X$.

Here, we do not really need more than the above definition, but the isomorphic and isometric structure of the Ces{\` a}ro function spaces
has been the subject of extensive research and has many non-trivial results to offer (see, for example, \cite{ALM19}, \cite{CR16}, \cite{DS07},
\cite{KKM21}, \cite{KKT22}, \cite{KMS07}, \cite{KT17}, \cite{KT24}, \cite{LM15a}, \cite{LM16} and \cite{MS06}; see also a survey \cite{AM14} and references therein).

Let us get to the point (for unexplained notation, we refer directly to \cite{KT24}).

\begin{theorem}[Essential norm of multiplication operators between abstract Ces{\` a}ro function spaces]\label{THM: ess norm Cezaro and Tandori function}
	{\it Let $X$ and $Y$ be two K{\" o}the function spaces with the Fatou property both defined on the semi-axis $(0,\infty)$ equipped with the Lebesgue measure.
		Suppose that either the space $X$ is separable or the space $Y$ is reflexive.
		Further, suppose that either the space ${\bf E}(Y)$ factorizes though ${\bf E}(X)$, that is,
		\begin{equation*}
			{\bf E}(Y) = {\bf E}(X) \odot M({\bf E}(X),{\bf E}(Y)),
		\end{equation*}
		or that both spaces $X$ and $Y$ are strongly separated, that is,
		\begin{equation*}
			\alpha_X > \beta_Y.
		\end{equation*}
		Then, we have
		\begin{equation*}
				\norm{M_{\lambda} \colon CX \rightarrow CY}_{ess}
					\approx \norm{\lambda}_{\widetilde{M(X,Y)}}
					= \norm{M_{\lambda} \colon CX \rightarrow CY}
		\end{equation*}
		and the above estimate depend on $X$ and $Y$ only.
		Therefore, in particular, there is a lack of compact multiplication operators acting between two abstract Ces{\` a}ro function spaces.}
\end{theorem}
\begin{proof}
	The proof of this fact is to mimic the argument used in the proof of Theorem~\ref{THM: multipliers Cezaro i Tandori}. And so:
	
		($\alpha$) Since Hardy's operator $\mathscr{H}$ is bounded on both spaces $X$ and $Y$, so it follows from the optimal domain's point of view
		that the spaces $CX$ and $CY$ are both non-trivial. Note, however, that the boundedness of Hardy's operator $\mathscr{H}$ is in general not
		necessary for a non-triviality of the Ces{\` a}ro function space (see Example~3.3(d) and Example~~3.3(f) in \cite{KKM21}).
		
		($\beta$) As in the sequence case, the Fatou property and order continuity property are inherited by the corresponding Ces{\` a}ro function space
		(here we need \cite[Lemma~1(d)]{LM15a} and \cite[Lemma~11]{KT17}, respectively). Note also that the abstract Ces{\` a}ro function spaces
		are {\it never} reflexive, because, for example, they contain a complemented copy of $L_1(0,1)$ (see \cite[Lemma~4.1]{KKM21}).
		
		($\gamma$) In view of the two previous steps, we can now use Theorem~\ref{THM: essnorm multipliers function} to conclude that
		\begin{equation*}
			\norm{M_{\lambda} \colon CX \rightarrow CY}_{ess} \approx \norm{\lambda}_{M(CX,CY)} = \norm{M_{\lambda} \colon CX \rightarrow CY}.
		\end{equation*}
	
		($\delta$) Finally, due to our assumptions and \cite[Theorem~6.a.3]{KT24} (see also \cite[Theorem~6]{KLM19}), we have
		\begin{equation*}
			M(CX,CY) = \widetilde{M(X,Y)}.
		\end{equation*}
	
		All this together with Theorem~\ref{THM: essnorm multipliers function} completes the proof.
\end{proof}

Let $1 < p \leqslant \infty$.
Recall, that the {\bf classical Ces{\` a}ro function spaces $Ces_p$} is defined as $Ces_p \coloneqq CL_p$.
It was proved by Al Alam, Gaillard, Habib, Lef{\` e}vre and Maalouf in \cite{AGHLM18} that the essential norm
of the multiplier $M_{\lambda} \colon Ces_p(0,1) \rightarrow Ces_p(0,1)$ is precisely equal to sup-norm of the symbol $\lambda$ (see \cite[Theorem~5.2]{AGHLM18}).
Thus, for the sake of completeness, let us also formulate the following result.

\begin{corollary}
	{\it Let $X$ be a separable K{\" o}the function spaces with the Fatou property defined either on the semi-axis $(0,\infty)$
	or the unit interval $(0,1)$ equipped with the Lebesgue measure. Then}
		\begin{equation*}
			\norm{M_{\lambda} \colon CX \circlearrowleft}_{ess} = \norm{\lambda}_{L_{\infty}} = \norm{M_{\lambda} \colon CX \circlearrowleft}.
		\end{equation*}
		{\it In particular, there are no non-trivial compact multiplication operators acting from $CX$ into itself.}
\end{corollary}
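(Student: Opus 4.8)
The plan is to reduce this algebraic statement to the already-established Corollary~\ref{COR: compact function X to X}, in exactly the spirit in which the sequence-space analogue right after Theorem~\ref{THM: multipliers Cezaro i Tandori} was deduced from Corollary~\ref{COR: compact sequence X to X}. Concretely, I would treat $CX$ as a single Köthe function space and simply apply Corollary~\ref{COR: compact function X to X} to it, once I have verified that $CX$ inherits from $X$ the two hypotheses required there, namely the Fatou property and order continuity. This route deliberately avoids the factorization and Hardy/Copson boundedness assumptions of Theorem~\ref{THM: ess norm Cezaro and Tandori function}, which is precisely why the corollary can be stated uniformly on both $(0,1)$ and $(0,\infty)$.

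First I would record that the Cesàro construction preserves both properties. That $CX$ is a bona fide (and, in the non-degenerate case, nontrivial) Köthe function space is clear from the ideal property of $\mathscr{H}(\abs{\cdot})$; on $(0,1)$ the membership $\chi_{\sigma} \in CX$ for $\mu(\sigma) < \infty$ is automatic, since $\mathscr{H}(\chi_{\sigma}) \leqslant \chi_{(0,1)} \in X$. The Fatou property of $CX$ then follows from that of $X$ by \cite[Lemma~1(d)]{LM15a} (the same reference invoked in step ($\beta$) of the proof of Theorem~\ref{THM: ess norm Cezaro and Tandori function}), and order continuity of $X$ passes to $CX$ by \cite[Lemma~11]{KT17}. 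Thus $CX$ is an order continuous Köthe function space with the Fatou property.

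Next I would invoke the general identification $M(Z,Z) \equiv L_{\infty}$, valid for every Köthe space $Z$ (see the bullet list in the subsection on pointwise multipliers). Applying it with $Z = CX$ gives $M(CX,CX) \equiv L_{\infty}$. With this in hand, Corollary~\ref{COR: compact function X to X} applied to the space $CX$ yields at once
\begin{equation*}
	\norm{M_{\lambda} \colon CX \circlearrowleft}_{ess} = \norm{M_{\lambda} \colon CX \circlearrowleft} = \norm{\lambda}_{M(CX,CX)} = \norm{\lambda}_{L_{\infty}},
\end{equation*}
which is the desired equality. The final assertion is then immediate: since the essential norm coincides with the operator norm, the multiplication operator $M_{\lambda} \colon CX \rightarrow CX$ is compact precisely when $\norm{\lambda}_{L_{\infty}} = 0$, that is, only for the trivial symbol $\lambda = 0$.

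The only non-routine ingredient — and hence the step I would treat most carefully — is the transfer of order continuity from $X$ to $CX$, everything else being a bookkeeping application of the cited results. A minor point worth flagging is that on the semi-axis the Cesàro construction may degenerate (for instance $CL_1(0,\infty) = \{0\}$), in which case the statement is vacuous; for any nontrivial $CX$ the verification above applies without change.
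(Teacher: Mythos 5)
Your proposal is correct and follows essentially the same route as the paper's own proof: both reduce the statement to Corollary~\ref{COR: compact function X to X} by noting $M(CX,CX) \equiv L_{\infty}$ and by transferring order continuity and the Fatou property from $X$ to $CX$ via \cite[Lemma~11]{KT17} and \cite[Lemma~1(d)]{LM15a}, respectively. Your extra remarks on nontriviality and the degenerate case on $(0,\infty)$ are harmless additions, not a different argument.
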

\begin{proof}
	To begin with, recall that $M(CX,CX) \equiv L_{\infty}$. Moreover, it follows from \cite[Lemma~11]{KT17} and, respectively, \cite[Lemma~1(d)]{LM15a}
	that if $X$ is order continuous or, respectively, has the Fatou property, then the same can be said about the space $CX$. Now, this result is an immediate
	consequence of Corollary~\ref{COR: compact function X to X}.
\end{proof}

\subsection{Banach spaces of analytic functions}
In this subsection, we will improve and complete the main result from Mleczko's paper \cite{Mle09} (precisely, see \cite[Theorem~2.5]{Mle09}).
However, before we do that, we need short preliminaries to clarify our notation.

Let $\mathscr{H}(\mathbb{D})$ be the space of all analytic functions on the unit disc $\mathbb{D} \coloneqq \{z \in \mathbb{C} \colon \abs{z} < 1\}$.
Further, let $E$ be a K{\" o}the sequence space. With a linear subspace, say $H(\mathbb{D})$, of $\mathscr{H}(\mathbb{D})$ we can associate
the space
\begin{equation*}
	\widehat{H}(\mathbb{D}) \coloneqq \left\{ \left\{ \widehat{f}(n) \right\}_{n=0}^{\infty} \colon \sum_{n=0}^{\infty} \widehat{f}(n)\chi_n \in H(\mathbb{D}) \right\}
\end{equation*}
of Taylor's coefficients of functions from $H(\mathbb{D})$, where $\chi_n(z) = z^n$ for $z \in \mathbb{D}$ and $n \in \mathbb{N}_0$.
Then with any sequence $\lambda = \{\lambda_n\}_{n=0}^{\infty}$ from $M(\widehat{H}(\mathbb{D}),E)$ we can associate its {\bf Fourier multiplier operator}
$\mathscr{M}_{\lambda} \colon H(\mathbb{D}) \rightarrow E$ defined by
\begin{equation*}
	\mathscr{M}_{\lambda} \colon f \rightsquigarrow \left\{ \lambda_n \widehat{f}(n) \right\}_{n=0}^{\infty},
\end{equation*}
where $f = \sum_{n=0}^{\infty} \widehat{f}(n)\chi_n \in H(\mathbb{D})$ for $n \in \mathbb{N}_0$.

Let $X$ be a K{\" o}the space defined on the unit circle $\mathbb{T} \coloneqq \{z \in \mathbb{C} \colon \abs{z} = 1\}$
equipped with the normalized Lebesgue measure $dm(t) = \abs{dt}/2\pi$.
Then the {\bf abstract Hardy space $H[X]$} is defined as a vector space
\begin{equation*}
	H[X] \coloneqq \left\{ f \in X \colon \widehat{f}(n) = 0 \text{ for all } n < 0 \right\}
\end{equation*}
furnished with the norm inherited from $X$. Equivalently, via convolution with the Poisson kernel, we can treat the Hardy space $H[X]$
as a space of analytic functions on the unit disc $\mathbb{D}$. In particular, the space $H[L_p]$ with $1 \leqslant p \leqslant \infty$
is nothing else, but the {\bf classical Hardy space} $H_p$. For much more information about the theory of Hardy spaces we refer to Duren's monograph \cite{Dur76}.

We are ready to prove the promised result.

\begin{theorem}[Essential norms of Fourier multipliers] \label{THM: ess norm holomorphic}
	{\it Let $H(\mathbb{D})$ be a Banach space of analytic function on the unit disc $\mathbb{D}$ intermediate between $H_{\infty}(\mathbb{D})$
		and $H_2(\mathbb{D})$. Moreover, let $E$ be a separable K{\" o}the sequence space with the Fatou property. Then}
		\begin{equation*}
			\norm{\mathscr{M}_{\lambda} \colon H(\mathbb{D}) \rightarrow E}_{ess} = \lim\limits_{n \rightarrow \infty} \norm{ \sum_{i=n}^{\infty} \lambda_i e_i }_{M(\ell_2,E)}.
		\end{equation*}
		{\it In particular, the Fourier multiplier $\mathscr{M}_{\lambda} \colon H(\mathbb{D}) \rightarrow E$ is compact if, and only if, $\lambda \in M(\ell_2,E)_o$.}
\end{theorem}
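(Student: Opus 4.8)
The plan is to reduce everything to the sequence-space result already in hand, Theorem~\ref{THM: Essential norm Kothe sequence}, applied to the \emph{genuine} multiplication operator $M_{\lambda} \colon \ell_2 \rightarrow E$. The bridge is the factorization $\mathscr{M}_{\lambda} = M_{\lambda} \circ \iota$, where $\iota \colon H(\mathbb{D}) \rightarrow \ell_2$ is the Taylor-coefficient map $f \mapsto \{\widehat{f}(n)\}_{n=0}^{\infty}$; indeed $M_{\lambda}(\iota(f)) = \{\lambda_n \widehat{f}(n)\}_{n=0}^{\infty} = \mathscr{M}_{\lambda}(f)$. Since $H(\mathbb{D}) \hookrightarrow H_2(\mathbb{D})$ with norm one and $H_2(\mathbb{D}) \equiv \ell_2$ isometrically via $\iota$ (Parseval), the operator $\iota$ has norm at most one; note also that each monomial satisfies $\norm{z^n}_{H} = 1$, because $\norm{z^n}_{H_{\infty}} = \norm{z^n}_{H_2} = 1$ and $H$ lies in between. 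As $\ell_2$ is reflexive and $E$ is order continuous with the Fatou property, Theorem~\ref{THM: Essential norm Kothe sequence} gives
\begin{equation*}
	\norm{M_{\lambda} \colon \ell_2 \rightarrow E}_{ess} = \lim_{n \rightarrow \infty} \norm{\lambda \chi_{\{n, n+1, \ldots\}}}_{M(\ell_2, E)},
\end{equation*}
so the whole problem boils down to showing that $\mathscr{M}_{\lambda}$ and $M_{\lambda}$ have the same essential norm. The easy half is immediate from the factorization and the submultiplicativity of the essential norm: since $\norm{\iota} \leqslant 1$, writing $M_{\lambda} = S + K$ with $K$ compact gives $\mathscr{M}_{\lambda} = S\iota + K\iota$ with $K\iota$ compact, whence $\norm{\mathscr{M}_{\lambda}}_{ess} \leqslant \norm{M_{\lambda} \colon \ell_2 \rightarrow E}_{ess}$.

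The matching lower bound is the harder half, and I would prove it by transplanting the second case of the proof of Theorem~\ref{THM: Essential norm Kothe sequence} (the one that uses order continuity of the range rather than weak nullity of the domain). Fix $\varepsilon > 0$ and choose finitely supported near-optimizers $a^{(n)} \in \mathrm{Ball}(\ell_2)$ with $\supp(a^{(n)}) \subset \{n, n+1, \ldots\}$ lying in pairwise disjoint, increasing frequency blocks and satisfying $\norm{\lambda a^{(n)}}_E \geqslant \norm{\lambda \chi_{\{n, n+1, \ldots\}}}_{M(\ell_2, E)} - \varepsilon$. I then lift each $a^{(n)}$ to a function $f_n \in H(\mathbb{D})$ \emph{by randomization}: for signs $\varepsilon_k = \pm 1$ the polynomial $\sum_k \varepsilon_k a^{(n)}_k z^k$ has coefficients of modulus $\abs{a^{(n)}_k}$, so that $\norm{\mathscr{M}_{\lambda} f_n}_E = \norm{\lambda a^{(n)}}_E$ is unchanged (as $E$ is solid), while a Khintchine-type inequality in the rearrangement invariant space underlying $H$ bounds $\norm{f_n}_{H}$ by a fixed multiple of $\norm{a^{(n)}}_{\ell_2}$. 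For a compact $K \colon H(\mathbb{D}) \rightarrow E$ the set $\{K f_n\}$ is relatively compact, so a subsequence converges to some accumulation point $y_a \in E$; because the supports of $\lambda \widehat{f_n}$ march off to infinity and $E$ is order continuous, $\norm{y_a \chi_{\{n, n+1, \ldots\}}}_E \to 0$, and the reverse-triangle bookkeeping of Theorem~\ref{THM: Essential norm Kothe sequence} then delivers $\norm{\mathscr{M}_{\lambda} - K}_{H \rightarrow E} \gtrsim \lim_n \norm{\lambda \chi_{\{n, n+1, \ldots\}}}_{M(\ell_2, E)} - C\varepsilon$.

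The main obstacle is precisely the randomized lift, which is equivalent to the algebraic identification $M(\widehat{H}(\mathbb{D}), E) = M(\ell_2, E)$ \emph{with matching norms}. One inclusion is free from $\widehat{H}(\mathbb{D}) \hookrightarrow \ell_2$; for the reverse I would invoke the Anderson--Shields principle $M(V, E) = M(S(V), E)$ recalled in the introduction (see \cite{AS76}) together with the computation of the solid envelope $S(\widehat{H}(\mathbb{D})) = \ell_2$, which is exactly what the randomization furnishes. This is where the hypothesis that $H$ is intermediate does the work, and where the delicacy lies: the Khintchine inequality controlling the $H$-norm of the lift degrades by a logarithmic factor at the $H_{\infty}$ endpoint, so that endpoint is genuinely excluded, and the whole point of pinning the essential norm to $M(\ell_2, E)$ rather than to the a priori smaller $M(\widehat{H}(\mathbb{D}), E)$ is to carry out this lift with the norm inflation controlled. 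Granting the identification, combining the two halves yields the announced formula, and the compactness criterion follows at once, since $\lim_n \norm{\lambda \chi_{\{n, n+1, \ldots\}}}_{M(\ell_2, E)} = \text{dist}_{M(\ell_2, E)}(\lambda, M(\ell_2, E)_o)$ by the computation of Claim~$\clubsuit$, and this distance vanishes exactly when $\lambda \in M(\ell_2, E)_o$.
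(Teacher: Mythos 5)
Your skeleton is the same as the paper's: factor $\mathscr{M}_{\lambda}=M_{\lambda}\circ\iota$ through the coefficient map, get the upper bound from submultiplicativity of the essential norm, and get the lower bound by identifying $\mathscr{M}(H(\mathbb{D}),E)$ with $M(\ell_2,E)$ (Anderson--Shields plus a solid-envelope computation) so that Theorem~\ref{THM: Essential norm Kothe sequence} applies. The gap is in how you produce the lift from $\mathrm{Ball}(\ell_2)$ into $H(\mathbb{D})$. You do it by randomizing signs and invoking a Khintchine-type inequality, and you then dismiss the resulting loss at the $H_{\infty}$ endpoint on the grounds that \enquote{that endpoint is genuinely excluded.} It is not: \enquote{intermediate between $H_{\infty}(\mathbb{D})$ and $H_2(\mathbb{D})$} means $H_{\infty}(\mathbb{D})\hookrightarrow H(\mathbb{D})\hookrightarrow H_2(\mathbb{D})$, so $H(\mathbb{D})=H_{\infty}(\mathbb{D})$ itself is covered, as is $H[X]$ for every $X$ between $L_{\infty}(\mathbb{T})$ and $L_2(\mathbb{T})$, however close to $L_{\infty}$. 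For such spaces randomization genuinely fails: a series $\sum_k \pm a_k z^k$ with $\{a_k\}\in\ell_2$ lies almost surely in every $H_p$, $p<\infty$, but not in $H_{\infty}$ (Salem--Zygmund forces an extra $\sqrt{\log}$; at the Rademacher level, $\ell_2$-equivalence in an r.i.\ space requires the space to contain the exponential-square class, which spaces near $L_{\infty}$ do not). Worse, since the only space guaranteed to embed into \emph{every} intermediate $H(\mathbb{D})$ is $H_{\infty}(\mathbb{D})$, a lift that merely lands in $\bigcap_{p<\infty}H_p$ need not land in $H(\mathbb{D})$ at all. Note also that for a general $H(\mathbb{D})$ the theorem assumes nothing beyond the two embeddings, so there is no \enquote{rearrangement invariant space underlying $H$} in which to run Khintchine.

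What closes the gap---and what the paper actually uses---is Kisliakov's theorem that the solid envelope of the coefficient space of $H_{\infty}(\mathbb{D})$ is already $\ell_2$: every $a\in\ell_2$ is dominated coefficientwise by some $f\in H_{\infty}(\mathbb{D})$ with $\norm{f}_{H_{\infty}}\leqslant C\norm{a}_{\ell_2}$. Since $H_{\infty}(\mathbb{D})\hookrightarrow H(\mathbb{D})$, this lifts near-optimizers into a fixed multiple of $\mathrm{Ball}(H(\mathbb{D}))$ for all intermediate spaces simultaneously, with no endpoint degradation; by solidity of $E$ the images under $\mathscr{M}_{\lambda}$ still dominate $\lambda a^{(n)}$ on the tails $\{n,n+1,\ldots\}$, so your compactness bookkeeping (the order-continuity case of Theorem~\ref{THM: Essential norm Kothe sequence}) goes through, and Anderson--Shields then gives $S(\widehat{H}(\mathbb{D}))=\ell_2$ and $\mathscr{M}(H(\mathbb{D}),E)=M(\ell_2,E)$ exactly as you want. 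In short: your reduction is correct, but the randomized lift must be replaced by Kisliakov's construction, which is a strictly stronger ingredient than Khintchine, not a reformulation of it; it is precisely what makes the theorem true at and near the $H_{\infty}$ endpoint.
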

\begin{proof}
	The proof basically consists of two steps.
	
	{\bf First step}. First, we need to transfer the situation to our \enquote{ideal} word. This can be done using the so-called solid envelopes
	together with Anderson and Shield's results from \cite{AS76}. More precisely, thanks to our assumption that
	\begin{equation*}
		H_{\infty}(\mathbb{D}) \hookrightarrow H(\mathbb{D}) \hookrightarrow H_2(\mathbb{D})
	\end{equation*}
	and, moreover, knowing that the solid envelope of $H_{\infty}(\mathbb{D})$ and $H_2(\mathbb{D})$ is $\ell_2$
	(the harder part of this result is essentially due to S. Kisliakov; see \cite[Proposition~1.1]{Mle09}), it is not difficult to see that the following diagram commutes
	\begin{equation*}
		\xymatrix{
			H(\mathbb{D}) \ar[d]_{\text{id}} \ar[r]^{\mathscr{M}_{\lambda}} & E\\
			H_2 \ar[r]^{\mathscr{F}} & \ell_2 \ar[u]_{M_{\lambda}}}
	\end{equation*}
	Here $\lambda \in \widehat{H}(\mathbb{D})$ and $\mathscr{F}(f) \coloneqq \left\{ \widehat{f}(n) \right\}_{n=0}^{\infty}$ for $f \in H(\mathbb{D})$
	(see \cite[Corollary~1.2]{Mle09}). But this clearly means that
	\begin{equation*}
		\mathscr{M}(H(\mathbb{D}),E) = M(\ell_2,E).
	\end{equation*}

	{\bf Second step}. Since we assumed that the space $E$ is separable, so to finish the proof it is enough to apply Theorem~\ref{THM: Essential norm Kothe sequence}.
\end{proof}

\begin{remark}[About Theorem~\ref{THM: ess norm holomorphic}]
	Let $X$ be a K{\" o}the space defined on the unit circle $\mathbb{T}$.
	Suppose that $X$ is an intermediate space between $L_{\infty}(\mathbb{T})$ and $L_2(\mathbb{T})$.
	Then, it is straightforward to see, that the corresponding Hardy space $H[X]$ is an intermediate space between $H_{\infty}(\mathbb{D})$ and $H_2(\mathbb{D})$.
	For this reason, the abstract Hardy spaces may serve here as a main illustration for Theorem~\ref{THM: ess norm holomorphic}.
	\demo
\end{remark}

One immediate conclusions from Theorem~\ref{THM: ess norm holomorphic} is the following Mleczko's result from \cite{Mle09} (precisely, see Theorem~2.5 ibidem).

\begin{corollary}[P. Mleczko, 2009]
	{\it Let $H(\mathbb{D})$ be a Banach space of analytic function on the unit disc $\mathbb{D}$ intermediate between $H_{\infty}(\mathbb{D})$
		and $H_2(\mathbb{D})$. Moreover, let $E$ be a separable K{\" o}the sequence space with the Fatou property intermediate between
		$\ell_2$ and $\ell_{\infty}$. Then the Fourier multiplier $\mathscr{M}_{\lambda} \colon H(\mathbb{D}) \rightarrow E$ is compact if, and
		only if,
		\begin{equation*}
			\limsup_{n \rightarrow \infty} \abs{\lambda_n} = 0.
		\end{equation*}
	}
\end{corollary}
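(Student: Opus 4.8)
The plan is to read this off Theorem~\ref{THM: ess norm holomorphic} with essentially no extra work. All hypotheses of that theorem are in force here: $H(\mathbb{D})$ is intermediate between $H_{\infty}(\mathbb{D})$ and $H_2(\mathbb{D})$, and $E$ is an order continuous K{\"o}the sequence space with the Fatou property. Hence the essential norm formula applies and, in particular, the Fourier multiplier $\mathscr{M}_{\lambda} \colon H(\mathbb{D}) \rightarrow E$ is compact if, and only if, $\lambda \in M(\ell_2,E)_o$. The only remaining task is to translate the membership condition $\lambda \in M(\ell_2,E)_o$ into the concrete statement $\limsup_{n \rightarrow \infty} \abs{\lambda_n} = 0$, and for this I would exploit the additional hypothesis that $E$ is intermediate between $\ell_2$ and $\ell_{\infty}$, that is, $\ell_2 \hookrightarrow E \hookrightarrow \ell_{\infty}$.

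The key (and really the only) step is the clean identification $M(\ell_2,E) = \ell_{\infty}$. From $\ell_2 \hookrightarrow E$ and the recorded property that $L_{\infty} \hookrightarrow M(X,Y)$ precisely when $X \hookrightarrow Y$, I would obtain $\ell_{\infty} \hookrightarrow M(\ell_2,E)$. For the reverse inclusion I would combine $E \hookrightarrow \ell_{\infty}$ with the monotonicity property $Y \hookrightarrow Z \Rightarrow M(X,Y) \hookrightarrow M(X,Z)$ to get $M(\ell_2,E) \hookrightarrow M(\ell_2,\ell_{\infty})$, and then note $M(\ell_2,\ell_{\infty}) = \ell_{\infty}$: indeed, testing a candidate multiplier $g$ against the unit vectors $e_n$ of $\ell_2$ gives $\abs{g_n} = \norm{g e_n}_{\ell_{\infty}} \leqslant \norm{g}_{M(\ell_2,\ell_{\infty})}$ for every $n$, whence $g \in \ell_{\infty}$. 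Putting the two inclusions together yields $M(\ell_2,E) = \ell_{\infty}$, in fact with equality of norms.

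With this in hand the conclusion is immediate. Since $M(\ell_2,E) = \ell_{\infty}$, its separable part satisfies $M(\ell_2,E)_o = (\ell_{\infty})_o = c_0$, so the condition $\lambda \in M(\ell_2,E)_o$ is exactly $\lambda \in c_0$, i.e. $\limsup_{n \rightarrow \infty} \abs{\lambda_n} = 0$. Equivalently, I could substitute directly into the essential norm formula of Theorem~\ref{THM: ess norm holomorphic} and use the same identification to compute $\lim_{n \rightarrow \infty} \norm{\lambda\chi_{\{n, n+1, \ldots\}}}_{M(\ell_2,E)} = \lim_{n \rightarrow \infty} \sup_{m \geqslant n} \abs{\lambda_m} = \limsup_{n \rightarrow \infty} \abs{\lambda_n}$, which vanishes if and only if the stated condition holds. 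I do not anticipate any genuine obstacle beyond the multiplier-space identification of the second paragraph; once $M(\ell_2,E) = \ell_{\infty}$ is established, the corollary drops out of Theorem~\ref{THM: ess norm holomorphic}.
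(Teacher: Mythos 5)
Your proposal is correct and follows essentially the same route as the paper: invoke Theorem~\ref{THM: ess norm holomorphic} and reduce everything to the identification $M(\ell_2,E) = \ell_{\infty}$, whence $M(\ell_2,E)_o = c_0$ and the essential norm becomes $\limsup_{n \rightarrow \infty} \abs{\lambda_n}$. The only difference is that you spell out both inclusions behind $M(\ell_2,E) = \ell_{\infty}$ (using $\ell_2 \hookrightarrow E$ for one and $E \hookrightarrow \ell_{\infty}$ with the unit-vector test for the other), whereas the paper simply asserts this identification.
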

\begin{proof}
	With the extra assumption that $\ell_2 \hookrightarrow E$, we have $M(\ell_2,E) = \ell_{\infty}$. Thus,
	\begin{align*}
		\lim\limits_{n \rightarrow \infty} \norm{ \sum_{i=n}^{\infty} \lambda_i e_i }_{M(\ell_2,E)}
			& = \lim\limits_{n \rightarrow \infty} \norm{ \sum_{i=n}^{\infty} \lambda_i e_i }_{\ell_{\infty}} \\
			& = \lim\limits_{n \rightarrow \infty} \left( \sup_{m \geqslant n} \abs{\lambda_m} \right) \\
			& = \limsup_{n \rightarrow \infty} \abs{\lambda_n}.
	\end{align*}
\end{proof}

Let us close this section with the following simple example.

\begin{example}
	Let $1 \leqslant q < 2$ with $1/r = 1/q - 1/2$. Then it is straightforward to see that $M(\ell_2,\ell_q) = \ell_r$.
	Next, let $2 \leqslant q < \infty$. In this case $\ell_2 \hookrightarrow \ell_q$, so $M(\ell_2, \ell_q) = \ell_{\infty}$.
	Therefore, due to Theorem~\ref{THM: ess norm holomorphic}, the Fourier multiplier $\mathscr{M}_{\lambda} \colon H_p \rightarrow \ell_q$
	is compact if, and only if, its symbol $\lambda = \{\lambda_n\}_{n=1}^{\infty}$ belongs either to $\ell_r$, provided
	$1 \leqslant q < 2$, or to $c_0$, provided $2 \leqslant q < \infty$.
	\demo
\end{example}

\section{{\bf Arithmetic, remarks and examples}} \label{SECTION: examples}

Theorem~\ref{THM: Essential norm Kothe sequence} provides a simple recipe to check whether
the pointwise multiplier $M_{\lambda} \colon X \rightarrow Y$ is compact.
Namely, it is enough to first calculate the space of pointwise multipliers $M(X,Y)$ and, then,
find its separable part, that is, the ideal $M(X,Y)_o$.
The functions belonging to $M(X,Y)_o$ are exactly those symbols that generate compact multipliers.

In general, however, the description of $M(X,Y)_o$ is impossible.
The main reason for this is the observation that the space $M(X,Y)$ depends on the \enquote{relative}
position of the spaces $X$ and $Y$. For instance, $M(\ell_1,\ell_{\infty}) \equiv \ell_{\infty}$
and $M(\ell_{\infty},\ell_{1}) \equiv \ell_{1}$.
Anyhow, even when $X$ and $Y$ belong to some class of rearrangement invariant spaces, like, for example,
Orlicz spaces, the description of the space $M(X,Y)$ in the language of $X$ and $Y$ is a difficult task
with only piecemeal solutions scattered throughout the existing literature.
Actually, the example of Orlicz spaces is an exceptionally appropriate example here.
After many partial results and almost half a century of struggle, this problem has only recently been
put to rest by Djakov and Ramanujan in \cite{DR00}, and Le{\' s}nik and the second-named author in \cite{LT17}
(we refer to \cite{LT17} for a more detailed discussion).

That said, from this point on, we will focus on some important classes of rearrangement invariant sequence
spaces like, for example, Orlicz sequence spaces, Lorentz sequence spaces and Marcinkiewicz sequence spaces.
After the discussion on Musielak--Orlicz sequence spaces, our goal is to prove Theorem~\ref{Prop: computing multipliers from m and to lambda}.
{\it En route} to this result we will show a series of lemmas (see Lemma~\ref{LEMMA: M(X,Y) is r.i.}, Lemma~\ref{LEMMA: a'la KLM12/14}
and Lemma~\ref{LEMMA: a'la Kolwicz OC}) that are of independent interest.

\subsection{A few thoughts about Musielak--Orlicz sequence spaces} \label{SUBSEC: Musielak-Orlicz sequence}
	Let $\{M_n\}_{n=1}^{\infty}$ be a sequence of Orlicz functions\footnote{By an {\bf Orlicz function $M \colon [0,\infty) \rightarrow [0,\infty)$}
	we understand a convex and strictly increasing function vanishing at zero. We decided to use only the Orlicz functions here instead of the more
	general Young functions (which can \enquote{jump} to infinity) to avoid some unnecessary complications.}.
	Recall, that the {\bf Musielak--Orlicz sequence space} $\ell_{\{M_n\}}$ is a vector space of all sequences $x = \{x_n\}_{n=1}^{\infty}$
	with $\sum_{n=1}^{\infty} M_n(\abs{x_n}/\varrho) < \infty$ for some $\varrho > 0$, equipped with the {\bf Luxemburg--Nakano norm}
	\begin{equation*}
		\norm{x}_{\ell_{\{M_n\}}} \coloneqq \inf \left\{ \varrho > 0 \colon \sum_{n=1}^{\infty} M_n\left( \frac{x_n}{\varrho} \right) \leqslant 1 \right\}.
	\end{equation*}
	
		This general construction includes two very important types of spaces.
		On the one hand, the {\bf Nakano sequence spaces $\ell_{\{p_n\}}$} (or {\bf variable exponent Lebesgue--Riesz sequence spaces}),
		which are nothing else but Musielak--Orlicz sequence spaces $\ell_{\{M_n\}}$ with all generating functions $M_n$ being just power functions,
		that is, $M_n(t) = t^{p_n}$ for some $p_n \geqslant 1$.
		On the other, the {\bf Orlicz sequence spaces $\ell_M$} are Musielak--Orlicz sequence spaces $\ell_{\{M_n\}}$ with \enquote{constant}
		sequence of Orlicz functions $\{M_n\}_{n=1}^{\infty}$, that is, with $M_n = M$ for all $n \in \mathbb{N}$.

	We refer to the book of Cruz-Uribe and Fiorenza \cite{CUF13}, the monograph of Lindenstrauss and Tzafriri \cite[Chapter~4]{LT77}
	and the book of Maligranda \cite{Ma89} for much more information about variable Lebesgue spaces, Musielak--Orlicz sequence spaces and Orlicz spaces.
	
	\begin{remark}[About compact multipliers between Musielak--Orlicz sequence spaces] \label{REMARK: compact multipliers MO}
		Here we want to roughly explain the general technology needed to describe compact multipliers
		acting between $\ell_{\{M_n\}}$ and $\ell_{\{N_n\}}$.
		
		It follows from \cite{LT21} that the space of pointwise multipliers between two Musielak--Orlicz
		sequence spaces is another Musielak--Orlicz sequence space.
		In our setting, using Theorem~9 from \cite{LT21}, we have
		\begin{equation*}
			M\left( \ell_{\{M_n\}}, \ell_{\{N_n\}} \right) = \ell_{\{N_n \ominus M_n\}},
		\end{equation*}
		where
		\begin{equation*}
			(N_n \ominus M_n)(t) \coloneqq \sup\limits_{0 \leqslant s \leqslant 1} [N_n(ts) - M_n(s)] \quad \text{ for } \quad n \in \mathbb{N}
		\end{equation*}
		is the {\bf generalized Young conjugate} of $M_n$ with respect to $N_n$.
		
		Next, Kolwicz and Panfil in \cite{KP12} gave a complete description of a separable part of the Musielak--Orlicz space
		(in fact, they allowed for an even more general construction).
		Precisely, it can be deduced from Theorem~11 in \cite{KP12} that a sequence $x = \{x_n\}_{n=1}^{\infty}$
		has an order continuous norm in $\ell_{\{M_n\}}$ if, and only if, the sequence of Orlicz functions $\{M_n\}_{n=1}^{\infty}$
		satisfies the {\bf $\Delta_2(x)$-condition} (see \cite[Definition~1]{KP12}).
		
		The rest is essentially a matter of collecting existing results. Thanks to Theorem~\ref{THM: Essential norm Kothe sequence},
		the multiplication operator $M_{\lambda} \colon \ell_{\{M_n\}} \rightarrow \ell_{\{N_n\}}$ is compact if, and only if,
		the sequence $\{ N_n \ominus M_n \}_{n=1}^{\infty}$ satisfies the $\Delta_2(\lambda)$-condition.
		\demo
	\end{remark}

 	Even though the necessary calculations from Remark~\ref{REMARK: compact multipliers MO} can be done quite easily in a concrete situation,
 	the entire method can hardly be called effective (let alone the case when we allow the more general Young functions in place of Orlicz functions).
 	Anyway, sacrificing a little generality on the altar of clarity, we have the following
	
	\begin{theorem}[Compact multipliers between Nakano sequence spaces] \label{PROPOSITION: Compact multipliers between Nakano}
		{\it Let $\{p_n\}_{n=1}^{\infty}$ and $\{q_n\}_{n=1}^{\infty}$ be two sequences of real numbers with $1 \leqslant p_n \leqslant q_n < \infty$
			for all $n \in \mathbb{N}$. Suppose that either
			\begin{equation*}
				1 < \inf\limits_{n \in \mathbb{N}} p_n \leqslant \sup\limits_{n \in \mathbb{N}} p_n < \infty
			\end{equation*}
			or
			\begin{equation*}
				\sup\limits_{n \in \mathbb{N}} q_n < \infty.
			\end{equation*}
			Then the multiplication operator $M_{\lambda} \colon \ell_{\{q_n\}} \rightarrow \ell_{\{p_n\}}$ is compact if, and only if,}
		\begin{equation*}
			\forall \ell > 1 \colon \lim\limits_{k \rightarrow \infty} \sum_{\left\{ n \in \supp(\lambda) \colon r_n > \frac{\log k}{\log \ell} \right\}}
				\left( \abs{\lambda_n} \ell \right)^{r_n} = 0,
		\end{equation*}
		{\it where $1/r_n = 1/q_n - 1/p_n$ for $n \in \mathbb{N}$.}
	\end{theorem}
	\begin{proof}
		To make things clear, let us divide the proof into a few short steps.
		
		{\bf First step.} It is part of folklore that the Nakano space $\ell_{\{s_n\}}$ is order continuous if, and only if, $\sup_{n \in \mathbb{N}} s_n < \infty$.
		In consequence, using duality and \cite[Corollary~4.4, p.~23]{BS88}, the space $\ell_{\{s_n\}}$ is reflexive if, and only if, $\inf_{n \in \mathbb{N}} s_n > 1$
		and $\sup_{n \in \mathbb{N}} s_n < \infty$.
		
		{\bf Second step.} Suppose that $1 \leqslant p_n \leqslant q_n < \infty$ for all $n \in \mathbb{N}$.
		Let $1/r_n = 1/q_n - 1/p_n$ for $n \in \mathbb{N}$. Then, it follows from \cite[Theorem~9]{LT21}
		(see also Corollary~10 in \cite{LT21}) that
		\begin{equation*}
			M\left( \ell_{\{p_n\}}, \ell_{\{q_n\}} \right) = \ell_{\{q_n \ominus p_n\}} = \ell_{\{r_n\}}.
		\end{equation*}
		
		{\bf Third step.} Next, due to \cite[Theorem~11]{KP12}, the sequence $x = \{x_n\}_{n=1}^{\infty}$ belongs to the separable part of the
		Musielak--Orlicz sequence space $\ell_{\{M_n\}}$ if, and only if, for each $\ell > 1$, we have
		\begin{equation*}
			\lim_{k \rightarrow \infty} \norm{  \sum_{n \in A_k^{\ell}} M_n\left( \ell x_n \right) e_n }_{\ell_1} = 0.
		\end{equation*}
		Here, $A_k^{\ell} \coloneqq \left\{ n \in \supp(x) \colon M_n(\ell x_n) > k M_n(x_n) \right\}$.
		However, in our situation, we need to apply the above criterion to the Nakano sequence space $\ell_{\{r_n\}}$.
		Thus, after some simple calculations, it is easy to see that the sequence $x = \{x_n\}_{n=1}^{\infty}$
		belongs to the ideal $(\ell_{\{r_n\}})_o$ if, and only if, 
		\begin{equation*}
			\lim_{k \rightarrow \infty} \norm{ \sum_{n \in A_k^{\ell}} \left( \ell x_n \right)^{r_n} e_n }_{\ell_1} = 0.
		\end{equation*}
		Here,
		\begin{equation*}
			A_k^{\ell} = \left\{ n \in \supp(x) \colon r_n > \frac{\log k}{\log \ell} \right\}.
		\end{equation*}
	
		{\bf Fourth step.} Now, to complete the proof, it is enough to use the previous steps and Theorem~\ref{THM: Essential norm Kothe sequence}.
	\end{proof}

\subsection{Lorentz and Marcinkiewicz's setting}
	Let $\varphi$ be a positive, increasing and concave function on $(0,\infty)$.
	By the {\bf Lorentz sequence space $\lambda_{\varphi}$} and the {\bf Marcinkiewicz sequence space $m_{\varphi}$},
	we understand the spaces of all sequences $x = \{x_n\}_{n=1}^{\infty}$ for which the following norms
	\begin{equation*}
		\norm{x}_{\lambda_{\varphi}} \coloneqq \sum_{n=1}^{\infty} x_n^{\star} (\varphi(n+1) - \varphi(n))
	\end{equation*}
	and, respectively,
	\begin{equation*}
		\norm{x}_{m_{\varphi}} \coloneqq \sup\limits_{n \in \mathbb{N}} \frac{\varphi(n)}{n} \sum_{k=1}^n x_k^{\star}
	\end{equation*}
	are finite. Here, $x^{\star} = \{x_n^{\star}\}_{n=1}^{\infty}$ is the decreasing rearrangement of the sequence $x = \{x_n\}_{n=1}^{\infty}$.
	In the case when $\varphi(n) = n^{1/p}$ with $1 < p < \infty$, the corresponding Marcinkiewicz sequence space $m_{\varphi}$
	is denoted by $\ell_{p,\infty}$ and is sometimes called the {\bf weak $\ell_p$-space}.
	
	\begin{remark}[Separable part of $\lambda_{\varphi}$ and $m_{\varphi}$] \label{REMARK: separable part lorentz marcinkiewicz}
		It is well-known that the Lorentz sequence space $\lambda_{\varphi}$ is separable if, and only if, $\varphi(\infty) = \infty$
		(see, for example, \cite[Lemma~5.1, p.~110]{KPS82}; cf. \cite[Corollary~4.20]{Kol16} together with the note on p.~820 about the sequence case).
		Note also that if $\varphi(\infty) < \infty$, then $\lambda_{\varphi} = \ell_{\infty}$. Thus, in this case, $(\lambda_{\varphi})_o = c_0$.
		On the other hand, the Marcinkiewicz sequence spaces $m_{\varphi}$ are never separable (see, for example, \cite[Remark~4.4(ii)]{Kol16}).
		However, the separable part $(m_{\varphi})_o$ of $m_{\varphi}$ can be described precisely in the following way
		\begin{equation*}
			(m_{\varphi})_o = \left\{ x = \{x_n\}_{n=1}^{\infty} \in m_{\varphi} \colon \lim\limits_{n \rightarrow \infty} \frac{\varphi(n)}{n} \sum_{k=1}^n x_k^{\star} = 0 \right\}
		\end{equation*}
		(see \cite[pp.~115--116]{KPS82}). Similarly, if $\varphi(\infty) < \infty$, then $m_{\varphi} = \ell_{\infty}$ and $(m_{\varphi})_o = c_0$.
		\demo
	\end{remark}

	After this short reminder, the main result of this section is as follows.
	
	\begin{theorem}[Compact multipliers between Lorentz and Marcinkiewicz sequence spaces] \label{Prop: computing multipliers from m and to lambda}
		{\it Let $\varphi$ and $\psi$ be two positive, increasing and concave functions on $(0,\infty)$.
			Further, let $X$ and $Y$  be two rearrangement invariant sequence spaces with the Fatou property.}
		\begin{enumerate}
			\item {\it Suppose that $\delta_{\varphi} < \infty$. Then the multiplier
				$M_{\lambda} \colon X \rightarrow \lambda_{\varphi}$ is compact if, and only if,
				either $\left\{ \frac{\varphi(n)}{n}\lambda_n^{\star} \right\}_{n=1}^{\infty} \in \left(X^{\times}\right)_o$ provided
				$\left\{ \varphi(n) / n \right\}_{n=1}^{\infty} \notin X^{\times}$, or $\{\lambda_n\}_{n=1}^{\infty} \in c_0$
				provided $\left\{ \varphi(n) / n \right\}_{n=1}^{\infty} \in \left( X^{\times} \right)_o$.}
			\item {\it Suppose that $\gamma_{\varphi} > 1$. Then the multiplier
				$M_{\lambda} \colon m_{\varphi} \rightarrow Y$ is compact if, and only if, either
				$\left\{ \lambda_n^{\star} / \varphi(n) \right\}_{n=1}^{\infty} \in Y_o$, provided
				$\left\{ 1 / \varphi(n) \right\}_{n=1}^{\infty} \notin Y$, or $\{\lambda_n\}_{n=1}^{\infty} \in c_0$,
				provided $\left\{ 1 / \varphi(n) \right\}_{n=1}^{\infty} \notin Y$.}
			
		\end{enumerate}
		{\it In particular,}
		\begin{enumerate}
			\item[(iii)] {\it if $\limsup_{n \rightarrow \infty} \psi(n) / \varphi(n) = \infty$
				and $\gamma_{\varphi} > 1$, then the multiplier $M_{\lambda} \colon m_{\varphi} \rightarrow \lambda_{\psi}$
				is compact if, and only if, $\left\{ \frac{\psi(n+1) - \psi(n)}{\varphi(n)} \lambda_n^{\star} \right\}_{n=1}^{\infty} \in \ell_1$,}
			\item[(iv)] {\it if $\limsup_{n \rightarrow \infty} \varphi(n) / \psi(n) = \infty$ and $\delta_{\varphi} < \infty$,
				then the multiplier $M_{\lambda} \colon \lambda_{\psi} \rightarrow \lambda_{\varphi}$
				is compact if, and only if, $\left\{ \frac{\varphi(n)}{\psi(n)} \lambda_n^{\star} \right\}_{n=1}^{\infty} \in c_0$.}
		\end{enumerate}
	\end{theorem}

	Before we give the proof of Theorem~\ref{Prop: computing multipliers from m and to lambda},
	we need some preparatory lemmas that seem interesting in themselves.
	
	\begin{lemma} \label{LEMMA: M(X,Y) is r.i.}
		{\it Let $X$ and $Y$ be two rearrangement invariant sequence spaces with the Fatou property.
		Then, up to the equivalence of norms, the space $M(X,Y)$ is rearrangement invariant as well.}
	\end{lemma}
	\begin{proof}
		To see this, take the function $x = \{x_n\}_{n=1}^{\infty}$ from $M(X,Y)$. For convenience, let us consider two cases separately.
		
		{\bf First case.} Suppose that $x^{\star}(\infty) > 0$.
		This clearly implies that $\ell_{\infty} \hookrightarrow M(X,Y)$. Moreover, if $F \hookrightarrow G$, then it is straightforward to
		see that $M(E,F) \hookrightarrow M(E,G)$. Note also that $M(E,\ell_{\infty}) \hookrightarrow \ell_{\infty}$. Indeed, if an unbounded
		function, say $z = \{z_n\}_{n=1}^{\infty}$, belonged to $M(E,\ell_{\infty})$, then $z_n = z e_n \in \ell_{\infty}$ for all $n \in \mathbb{N}$,
		which is impossible. Thus, combining all these three facts together and remembering that $Y \hookrightarrow \ell_{\infty}$
		(see \cite[Corollary~6.7, p.~78]{BS88}), we get
		\begin{equation*}
			\ell_{\infty} \hookrightarrow M(X,Y) \hookrightarrow M(X,\ell_{\infty}) \hookrightarrow \ell_{\infty}.
		\end{equation*}
		In consequence, $M(X,Y) = \ell_{\infty}$.
		
		{\bf Second case.} Suppose that $x^{\star}(\infty) = 0$. Here we will, more or less, follow the argument given in \cite[Theorem~2.2]{KLM12}.
		Let $y = \{y_n\}_{n=1}^{\infty}$ be any function that is equi-measurable with $x$. Then, due to \cite[Lemma~2.1, p.~60]{KPS82},
		for any $\varepsilon > 0$ there exists a measure-preserving transformation $T_{\varepsilon}$ such that
		\begin{equation*}
			\norm{x - y \circ T_{\varepsilon} }_{\ell_1} < \varepsilon.
		\end{equation*}
		Thus, since $\ell_1 \overset{c}{\hookrightarrow} Y$ with $c = \varphi_Y(1)$ and $X \overset{C}{\hookrightarrow} \ell_{\infty}$ with $C = 1/\varphi_X(1)$
		(see \cite[pp.~77--78]{BS88}), so for all functions $z = \{z_n\}_{n=1}^{\infty}$ from $\text{Ball}_X$, we have
		\begin{align*}
			\norm{yz}_Y
				& = \norm{(y \circ T_{\varepsilon})(z \circ T_{\varepsilon})}_Y \\
				& \leqslant \norm{x(z \circ T_{\varepsilon})}_Y + \norm{\left( y \circ T_{\varepsilon} - x \right)(z \circ T_{\varepsilon})}_Y \\
				& \leqslant \norm{x}_{M(X,Y)} + \norm{\left( y \circ T_{\varepsilon} - x \right)(z \circ T_{\varepsilon})}_F \\
				& \leqslant \norm{x}_{M(X,Y)} + \norm{z \circ T_{\varepsilon}}_{\ell_{\infty}} \norm{y \circ T_{\varepsilon} - x}_Y \\
				& \leqslant \norm{x}_{M(X,Y)} + \frac{\varphi_Y(1)}{\varphi_X(1)}\norm{z}_{X} \norm{y \circ T_{\varepsilon} - x}_{\ell_1} \\
				& < \norm{x}_{M(X,Y)} + \frac{\varphi_Y(1)}{\varphi_X(1)} \varepsilon.
		\end{align*}
		Due to $\varepsilon$'s arbitrariness we conclude that
		\begin{equation*}
			\norm{y}_{M(X,Y)} \leqslant \norm{x}_{M(X,Y)}.
		\end{equation*}
		To prove the reverse inequality, it is enough to swap the roles of $x$ and $y$.
	\end{proof}
		
		\begin{lemma} \label{LEMMA: a'la KLM12/14}
			{\it Let $X$ and $Y$ be two rearrangement invariant sequence spaces with the Fatou property.
				Further, let $\varphi$ be a positive, increasing and concave function on $(0,\infty)$. Then}
			\begin{enumerate}
				\item {\it $M(m_{\varphi},Y) = \left[ Y\left( 1 / \varphi \right) \right]^{\bigstar}$ provided $\gamma_{\varphi} > 1$,}
				\item {\it $M(X,\lambda_{\varphi}) = \left[ X^{\times}\left( \varphi(t) / t \right) \right]^{\bigstar}$ provided $\delta_{\varphi} < \infty$.}
			\end{enumerate}
		\end{lemma}
		\begin{proof}
			{\bf (i)}. Without loss of generality, we can assume that $m_{\varphi} \neq \ell_{\infty}$ or, which is the same thing,
			that the function $\varphi$ is unbounded. Otherwise, the sequence $\{\varphi(n)\}_{n=1}^{\infty}$ is equivalent to a constant
			sequence and we have the following identifications
			\begin{equation*}
				M(m_{\varphi},Y) = M(\ell_{\infty},Y) = Y = Y\left( \frac{1}{\varphi} \right) = \left[ Y\left( \frac{1}{\varphi} \right) \right]^{\bigstar}.
			\end{equation*}
			We will now consider two cases.
			
			{\bf First case.} Suppose that $\left\{ 1 / \varphi(n) \right\}_{n=1}^{\infty} \in Y$.
			Then, it is clear, that $\ell_{\infty} \hookrightarrow Y\left( 1 /\varphi \right)$.
			Moreover, for all $n \in \mathbb{N}$, we have
			\begin{equation*}
				 \frac{\varphi_Y(n)}{\varphi(n)}x_n^{\star}
				 	= \varphi_Y(n) \left[ \frac{x_n^{\star}}{\varphi(n)} \right]^{\star}
					\leqslant \frac{\varphi_Y(n)}{n} \sum_{k=1}^{n} \frac{x_k^{\star}}{\varphi(k)},
			\end{equation*}
			where the first equality follows from a simple fact that the product of two decreasing functions is decreasing,
			and the second inequality is due to \cite[Proposition~3.2, p.~52]{BS88}.
			Thus, in view of the embedding $Y \hookrightarrow m_{\varphi_Y}$ (see \cite[Proposition~5.1, p.~70]{BS88}), we have
			\begin{equation*}
				\ell_{\infty} \equiv \ell_{\infty}^{\bigstar} \hookrightarrow \left[ Y\left( \frac{1}{\varphi} \right) \right]^{\bigstar}
					\hookrightarrow \left[ m_{\varphi_Y}\left( \frac{1}{\varphi} \right) \right]^{\bigstar}
					\hookrightarrow \left[ \ell_{\infty}\left( \frac{\varphi_Y}{\varphi} \right) \right]^{\bigstar}.
			\end{equation*}
			Now, since $\sup_{n \in \mathbb{N}} \varphi_Y(n) / \varphi(n) < \infty$ (otherwise, the sequence $\left\{ 1 / \varphi(n) \right\}_{n=1}^{\infty}$
			does not belongs to $\left[ \ell_{\infty}(\varphi_Y) \right]^{\bigstar}$, which is impossible), so
			\begin{equation*}
				\left[ \ell_{\infty}\left( \frac{\varphi_Y}{\varphi} \right) \right]^{\bigstar} \hookrightarrow [\ell_{\infty}]^{\bigstar} \equiv \ell_{\infty}.
			\end{equation*}
			This means that
			\begin{equation} \label{EQ: Y(w)^s = l infty}
				\left[ Y\left( \frac{1}{\varphi} \right) \right]^{ \bigstar } = \ell_{\infty}.
			\end{equation}
			On the other hand, since $\left\{ 1 / \varphi(n) \right\}_{n=1}^{\infty} \in Y$, so $\ell_{\infty}(\varphi) \hookrightarrow Y$.
			Thus, remembering that $\gamma_{\varphi} > 1$ and using \cite[Theorem~5.3, p.~115]{KPS82}, we have
			\begin{equation*}
				m_{\varphi} = \left[ \ell_{\infty}(\varphi) \right]^{\bigstar} \hookrightarrow Y^{\bigstar} \equiv Y.
			\end{equation*}
			This imply that $\ell_{\infty} \hookrightarrow M(m_{\varphi},Y)$.
			Moreover, due to Lemma~\ref{LEMMA: M(X,Y) is r.i.}, the space $M(m_{\varphi},Y)$ is rearrangement invariant,
			so $M(m_{\varphi},Y) \hookrightarrow \ell_{\infty}$. In consequence,
			\begin{equation} \label{EQ M = linfty}
				M(m_{\varphi},Y) = \ell_{\infty}.
			\end{equation}
			Putting (\ref{EQ: Y(w)^s = l infty}) together with \eqref{EQ M = linfty}, we get
			\begin{equation*}
				M(m_{\varphi},Y) = \left[ Y\left( \frac{1}{\varphi} \right) \right]^{\bigstar}.
			\end{equation*}
			This is what we wanted.
			
			{\bf Second case.} This time, let us assume that $\left\{ 1 / \varphi(n) \right\}_{n=1}^{\infty} \notin Y$.
			Again, our plan is to show that
			\begin{equation} \label{EQ: idea KLM14}
				M(m_{\varphi},Y) = \left[ Y\left( \frac{1}{\varphi} \right) \right]^{\bigstar}.
			\end{equation}
			To do this, we will need the following result whose proof we essentially borrow from \cite[p.~653]{KLM14}.
			
			\vspace{5pt} \noindent {\bf Claim~$\blacktriangle$.}
			{\it Let $F$ be a rearrangement invariant sequence space.
			Further, let $\varphi$ be a positive, increasing and concave function on $(0,\infty)$ with $\gamma_{\varphi} > 1$.
			Then, we have}
			\begin{equation*} \label{EQ: equality}
				\norm{x}_{M(m_{\varphi},F)} = \sup\limits_{T} \norm{\frac{x}{\varphi \circ T}}_F,
			\end{equation*}
			{\it where the supremum is taken over all measure-preserving transformations $T$.}
			
			\vspace{5pt}
			\noindent
			{\bf Proof of Claim~$\blacktriangle$}
			Take $x \in M(m_{\varphi},F)$ and let $T$ be a measure preserving transformation.
			Since, $\left( 1 / (\varphi \circ T) \right)^{\star} = 1 / \varphi \in \ell_{\infty}(\varphi)$
			and $m_{\varphi} = [\ell_{\infty}(\varphi)]^{\bigstar}$,
			so $1 / (\varphi \circ T) \in m_{\varphi}$.
			In consequence, $x / (\varphi \circ T) \in F$ and
			\begin{equation*}
				\norm{\frac{x}{\varphi \circ T}}_F \leqslant \norm{x}_{M(m_{\varphi},F)}.
			\end{equation*}
			Taking the supremum over all measure-preserving transformations $T$, we have
			\begin{equation*}
				\sup\limits_{T} \norm{\frac{x}{\varphi \circ T}}_F \leqslant \norm{x}_{M(m_{\varphi},F)}.
			\end{equation*}
		
			To prove the reverse inequality, suppose that $x / (\varphi \circ T) \in Y$ for any measure-preserving transformation $T$.
			Take $z \in \text{Ball}_{m_{\varphi}}$. As we already explained above, we can assume that $m_{\varphi} \neq \ell_{\infty}$.
			Thus, $m_{\varphi} \hookrightarrow c_0$ and, consequently, $z^*(\infty) = 0$.
			Moreover, Ryff's theorem guarantee the existence of a measure-preserving transformation, say $T_0$,
			such that $\abs{z} = z^{\star} \circ T_0$ (see \cite[Theorem~7.5, p.~82]{BS88}).
			Putting these facts together, we get
			\begin{equation*}
				\abs{xz} = \abs{x}(z^{\star} \circ T_0) \leqslant \frac{\abs{x}}{\varphi \circ T_0} \in F.
			\end{equation*}
			This means that $x \in M(m_{\varphi},Y)$ and, moreover,
			\begin{equation*}
				\norm{x}_{M(m_{\varphi},F)} \leqslant \sup\limits_{T} \norm{\frac{x}{\varphi \circ T}}_F.
			\end{equation*}
			The proof is finished.
			\hfill $\blacksquare$
			\vspace{5pt} \noindent
			
			Now we are ready to show (\ref{EQ: idea KLM14}).
			
			{\bf Embedding $M(m_{\varphi},Y) \hookrightarrow \left[ Y\left( 1 / \varphi \right) \right]^{\bigstar}$.}
			Take $x \in M(m_{\varphi},Y)$. In view of Claim~$\blacktriangle$, it is enough to show that
			\begin{equation*}
				\norm{\frac{x^{\star}}{\varphi}}_{Y}
					= \norm{\frac{x^{\star} \circ T}{\varphi \circ T}}_Y
					\leqslant \sup\limits_{T} \norm{\frac{x}{\varphi \circ T}}_Y.
			\end{equation*}
			However, once we know that there is a measure-preserving transformation, say $T_0$, such that $\abs{x} = x^{\star} \circ T_0$,
			the above inequality is basically obvious. So our goal for now is to show that $ x^{\star}(\infty) = 0$. Then, a simple application
			of Ryff's theorem will complete the argument. To do this, recall that we assumed that $\left\{ 1 / \varphi(n) \right\}_{n=1}^{\infty} \notin Y$.
			Thus
			\begin{equation*}
				\ell_{\infty}(\varphi) \not\hookrightarrow Y
			\end{equation*}
			and, consequently,
			\begin{equation} \label{EQ: 876}
				m_{\varphi} = \left[ \ell_{\infty}(\varphi) \right]^{\bigstar} \not\hookrightarrow Y^{\bigstar} = Y.
			\end{equation}
			We claim that
			\begin{equation} \label{EQ: 94949}
				\ell_{\infty} \not\hookrightarrow M(m_{\varphi}, Y).
			\end{equation}
			Indeed, otherwise, due to the fact that the space $M(m_{\varphi}, Y)$ is rearrangement invariant (see Lemma~\ref{LEMMA: M(X,Y) is r.i.}),
			we would have that $M(m_{\varphi}, Y) = \ell_{\infty}$. This in turn, however, would mean that $m_{\varphi} \hookrightarrow Y$.
			This clearly contradicts \eqref{EQ: 876}, so \eqref{EQ: 94949} follows.
			Thus, using once again the fact that the space $M(m_{\varphi}, Y)$ is rearrangement invariant, we have
			\begin{equation*}
				M(m_{\varphi}, Y) \hookrightarrow c_0.
			\end{equation*}
			This is exactly what we needed.
			
			{\bf Embedding $\left[ Y\left( 1 / \varphi \right) \right]^{\bigstar} \hookrightarrow M(m_{\varphi},Y)$.}
			Take $x \in \left[ Y\left( 1 / \varphi \right) \right]^{\bigstar}$ and let $T$ be a measure-preserving transformation.
			Applying \cite[Property~$10^{\circ}$, p.~67]{KPS82} and remembering that the function $t \rightsquigarrow 1 / \varphi(t)$
			is decreasing, we have
			\begin{align*}
				\left[ \frac{x}{\varphi \circ T} \right]^{\star}
					& \leqslant D_2 x^{\star} \, D_2 \left[ \frac{1}{\varphi \circ T} \right]^{\star}  \\
					& = D_2 x^{\star} \, D_2\left[ \frac{1}{\varphi} \right] \\
					& = D_2\left[ \frac{x^*}{\varphi} \right].
			\end{align*}
			Moreover, since the space $Y$ is rearrangement invariant, so
			\begin{align*}
				\norm{\frac{x}{\varphi \circ T}}_Y 
					& = \norm{ \frac{x}{\varphi \circ T} }_{Y^{\bigstar}} \\
					& \leqslant \norm{D_2}_{Y \rightarrow Y} \norm{\frac{x^{\star}}{\varphi}}_Y \\
					& \leqslant 2 \norm{x}_{\left[ Y\left( 1 / \varphi \right) \right]^{\bigstar}},
			\end{align*}
			where the last inequality is essentially due to the Calder{\' o}n--Mityagin interpolation theorem (see \cite[p.~130]{LT79}; cf. \cite[pp.~96--98]{KPS82}).
			Taking the supremum over all measure-preserving mappings ends the proof of (\ref{EQ: idea KLM14}).
			
			{\bf (ii)} This follows by duality. Indeed, remembering about our assumptions, we have
			\begin{align*}
				M\left( X, \lambda_{\varphi} \right) & = M\left( (\lambda_{\varphi})^{\times}, X^{\times} \right) \quad \text{(using \cite[Property~(vii), p.~879]{KLM12})} \\
					& = M\left( m_{t/\varphi(t)}, X^{\times} \right) \quad \text{(using \cite[Theorem~5.2, p.~112]{KPS82})} \\
					& = \left[ X^{\times}\left( \frac{\varphi(t)}{t} \right) \right]^{\bigstar} \quad \text{(by (i) proved above)}.
			\end{align*}
		\end{proof}
	
		Next, we need a lemma in the spirit of Kolwicz's results from \cite{Kol16} (see, for example, Theorem~3.9 ibidem).
	
		\begin{lemma} \label{LEMMA: a'la Kolwicz OC}
			{\it Let $X$ be a rearrangement invariant sequence space with the Fatou property.
			Further, let $w$ be a positive and decreasing function on $(0,\infty)$ such that $w \notin X$.
			Suppose that up to an equivalent norm the space $[X(w)]^{\bigstar}$ is a Banach space. Then}
			\begin{equation*}
				[ X(w)^{\bigstar} ]_o = \left[ X_o(w) \right]^{\bigstar}.
			\end{equation*}
		\end{lemma}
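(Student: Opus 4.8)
The plan is to collapse the whole statement into a single element-wise equivalence by using the tail characterization of order continuity in a K{\" o}the sequence space $Z$, namely fact $(\spadesuit)$ from the proof of Lemma~\ref{LEMMA: OC w Tandori}: $z \in Z_o$ if and only if $\lim_{n} \norm{z\chi_{\{n, n+1, ...\}}}_Z = 0$. Writing $V \coloneqq [X(w)]^{(*)}$, so that $\norm{x}_V = \norm{x^* w}_X$ and $V$ is a rearrangement invariant Banach space (by hypothesis), and unwinding the definitions $[X_o(w)]^{(*)} = \left\{ x \colon x^* w \in X_o \right\}$, the entire lemma reduces to
\[
	x \in V_o \iff x^* w \in X_o.
\]
Before starting I would record two preliminaries. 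First, every $x \in V$ has $x^*(\infty) = 0$: otherwise $x^* \geqslant c > 0$ would force $cw \leqslant x^* w \in X$, hence $w \in X$ by the ideal property, contradicting $w \notin X$. Consequently $s_n \coloneqq \sup_{j \geqslant n} \abs{x_j} \downarrow 0$. Second, I would invoke $(\spadesuit)$ for both $V$ and $X$, so that membership in $V_o$ and in $X_o$ becomes the decay of the respective tail norms.

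The combinatorial heart of the argument is the sandwich, valid for every $x$, every $n$ and every $k$,
\[
	x^*(k+n-1) \leqslant (x\chi_{\{n, n+1, ...\}})^*(k) \leqslant \min\left( x^*(k), s_n \right).
\]
Both bounds are elementary facts about order statistics: the upper bound holds because the tail is a sub-multiset of $\abs{x}$ (so its rearrangement is dominated by $x^*$) and because it is bounded by its own first term $s_n$; the lower bound holds because deleting the $n-1$ initial entries can lower each order statistic by at most $n-1$ places. I would prove these two inequalities directly and then split the two inclusions accordingly.

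For the inclusion $[X_o(w)]^{(*)} \subset V_o$, assume $x^* w \in X_o$ and estimate $\norm{x\chi_{\{n, ...\}}}_V = \norm{(x\chi_{\{n, ...\}})^* w}_X$ by cutting at a level $m$. Using the upper bound, the part $k > m$ is controlled by $\norm{(x^* w)\chi_{\{m+1, ...\}}}_X$, which is small for large $m$ since $x^* w \in X_o$, while the part $k \leqslant m$ is at most $s_n\, w(1)\, \varphi_X(m)$. The hard part, and the only real obstacle, is the order of the quantifiers: one must fix $m$ \emph{first} (using order continuity of $x^* w$) and only afterwards send $n \to \infty$, because the naive estimate $s_n \norm{w}_X$ is worthless ($w \notin X$) and the balanced estimate $s_n \varphi_X(m_n)$ need not vanish when $m$ is allowed to move with $n$. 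Fixing $\varepsilon$, then $m$, then $n$ in this order gives $\norm{x\chi_{\{n, ...\}}}_V \to 0$, that is, $x \in V_o$.

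For the reverse inclusion $V_o \subset [X_o(w)]^{(*)}$, assume $x \in V_o$. From the lower bound of the sandwich and monotonicity of the norm, $\norm{x\chi_{\{n, ...\}}}_V \geqslant \norm{\left\{ x^*(k+n-1) w(k) \right\}_k}_X$. Since $w$ is non-increasing, $x^*(k+n-1) w(k) \geqslant (x^* w)(k+n-1)$, and since $X$ is rearrangement invariant the sequence $\left\{ (x^* w)(k+n-1) \right\}_k$ is equimeasurable with the in-place tail $(x^* w)\chi_{\{n, ...\}}$, so their $X$-norms agree. Hence $\norm{(x^* w)\chi_{\{n, ...\}}}_X \leqslant \norm{x\chi_{\{n, ...\}}}_V \to 0$, which by $(\spadesuit)$ yields $x^* w \in X_o$. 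Combining the two inclusions finishes the proof.
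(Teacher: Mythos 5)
Your proof is correct, and it reaches the conclusion by a genuinely different route than the paper. The paper first reduces both inclusions to the rearrangement $x^*$ by citing the external fact that in a rearrangement invariant space $x$ is a point of order continuity if and only if $x^*$ is (Theorem~3.6 of \cite{KKT22}), and then, for the harder inclusion $\left[X(w)^{(*)}\right]_o \subset \left[X_o(w)\right]^{(*)}$, verifies the definition of order continuity on decreasing sequences $x^{(n)} \downarrow 0$, controlling $\norm{(x^{(n)}w)^*}_X$ via the estimate $(fg)^* \leqslant D_2(f^*)D_2(g^*)$ together with the boundedness of the dilation operator $D_2$ on $X$ (Calder{\' o}n--Mityagin). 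You avoid both ingredients: collapsing everything to the tail criterion $(\spadesuit)$ lets you work with $x$ itself, and the order-statistics sandwich
\begin{equation*}
	x^*(k+n-1) \leqslant \left( x\chi_{\{n, n+1, \ldots\}} \right)^*(k) \leqslant \min\left( x^*(k), s_n \right)
\end{equation*}
does the work that \cite{KKT22} and $D_2$ do in the paper: its upper half gives the same cut-at-level-$m$ estimate as the paper's first inclusion (with the same crucial quantifier order, $m$ before $n$), and its lower half, combined with $w(k) \geqslant w(k+n-1)$ and the equimeasurability of the shifted tail $\left\{ (x^*w)(k+n-1) \right\}_k$ with the in-place tail $(x^*w)\chi_{\{n, n+1, \ldots\}}$, replaces the dilation estimate entirely. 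What you gain is a self-contained, constant-free, purely elementary argument using only the ideal and rearrangement invariance properties of $X$; what the paper's route buys is a technique (reduction to $x^*$ plus dilation) that transfers to the function-space setting, where your \enquote{deleting $n-1$ entries shifts order statistics by at most $n-1$} argument has no direct analogue, and which stays aligned with the symmetrization machinery used elsewhere in the paper. One point worth making explicit if you write this up: you apply $(\spadesuit)$ to $V = [X(w)]^{(*)}$, which a priori carries only a quasi-norm equivalent to a Banach norm; this is harmless (the quasi-norm has the ideal property, order continuity and tail-norm decay are invariant under equivalent norms, and the proof of $(\spadesuit)$ only uses the quasi-triangle inequality), but the paper's proof leans on the same unstated fact, so neither argument is disadvantaged here.
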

		\begin{proof}
			In order not to overload the notation, we will omit some parentheses here and there. This should not lead to misunderstandings.
			Moreover, it is more or less obvious that $X(w)_o = X_o(w)$, so we only need to show that
			\begin{equation}
				[ X(w)_o ]^{\bigstar} = [ X(w)^{\bigstar} ]_o.
			\end{equation}
			We will justify both inclusions separately.
			
			{\bf First inclusion}. Fix $\varepsilon > 0$. To show that
			\begin{equation} \label{OCgwiazdkazawieranie 1}
				[ X(w)_o ]^{\bigstar} \subset [ X(w)^{\bigstar} ]_o,
			\end{equation}
			take the sequence $x = \{x_n\}_{n=1}^{\infty}$ from $[ X(w)_o ]^{\bigstar}$.
			This means that the sequence $x^{\star} = \{x^{\star}_n\}_{n=1}^{\infty}$ belongs to $X(w)_o$.
			Since the space $X(w)^{\bigstar}$ is rearrangement invariant, so in order to prove the inclusion \eqref{OCgwiazdkazawieranie 1},
			it is enough to show that $x^{\star} \in [ X(w)^{\bigstar} ]_o$ (see \cite[Theorem~3.6]{KKT22}).
			Clearly,
			\begin{equation*}
				\lim_{n \rightarrow \infty} \norm{ \sum_{i=n}^{\infty} x^{\star}_i e_i }_{X(w)} = 0,
			\end{equation*}
			so there is $N\in\mathbb{N}$ with
			\begin{equation*}
				\norm{ \sum_{i=n}^{\infty} x^{\star}_i e_i }_{X(w)} < \varepsilon \quad \text{ for } \quad n \geqslant N.
			\end{equation*}
			Next, let us note that $x^{\star}(\infty) = 0$. In fact, if this were not the case, then $\ell_\infty \hookrightarrow X(w)$.
			However, since the function $w$ is decreasing, this would mean that $w \in X$, which is impossible.
			Therefore, we can find $M \in \mathbb{N}$ such that
			\begin{equation*}
				x^{\star}_{m} \leqslant \frac{\varepsilon}{\norm{ \sum_{i=1}^N e_i }_{X(w)}} \quad \text{ for } \quad m \geqslant M.
			\end{equation*}
			In consequence, for $n \geqslant \max\{M,N\}$, we have
			\begin{align*}
				\norm{ \sum_{i=n}^{\infty} x^{\star}_i e_i }_{X(w)^{\bigstar}}
					& \leqslant \norm{\left( \sum_{i=M}^{\infty} x^{\star}_i e_i \right)^{\star} }_{X(w)} \\
					& \leqslant\norm{ \sum_{i = 1}^N x^{\star}_{M} e_i }_{X(w)} + \norm{ \sum_{i = N+1}^{\infty} x^{\star}_i e_i }_{X(w)}
					< 2\varepsilon.
			\end{align*}
			Thus $x^{\star} \in [ X(w)^{\bigstar} ]_o$ and this completes the proof of \eqref{OCgwiazdkazawieranie 1}.
			
			{\bf Second inclusion}. Take $x = \{x_n\}_{n=1}^{\infty}$ from $[ X(w)^{\bigstar} ]_o$. We will show that
			\begin{equation} \label{OCgwiazdkazawieranie 2}
				[ X(w)^{\bigstar} ]_o \subset [ X(w)_o ]^{\bigstar}.
			\end{equation}
			Once again, since the space $X(w)^{\bigstar}$ is rearrangement invariant, so in view of \cite[Theorem~3.6]{KKT22}
			the fact that $x \in [ X(w)^{\bigstar} ]_o$ is equivalent to $x^* \in [ X(w)^{\bigstar} ]_o$.
			Thus, in order to prove \eqref{OCgwiazdkazawieranie 2}, we need to show that $x^*\in X(w)_o$, that is,
			that for any sequence $\left\{ x^{(n)} \right\}_{n=1}^{\infty} \subset X(w)$ with $0 \leqslant x^{(n)} \leqslant x^{\star}$
			and $x^{(n)} \downarrow 0$ we have $\lim_{n \rightarrow \infty} \norm{x^{(n)}}_{X(w)} = 0$. Take such a sequence $\left\{ x^{(n)} \right\}_{n=1}^{\infty}$.
			Since $[X(w)^{\bigstar}]_o \subset [\ell_{\infty}]_o = c_0$, so $x^{\star}(\infty) = 0$.
			Thus $\left( x^{(n)} \right)^{\star} \downarrow 0$ and, trivially, $0 \leqslant \left( x^{(n)} \right)^{\star} \leqslant x^{\star}$.
			This means that $\lim_{n \rightarrow \infty} \norm{\left( x^{(n)} \right)^{\star}}_{X(w)} = 0$.
			In consequence, we get
			\begin{align*}
				\norm{x^{(n)}}_{X(w)} & = \norm{ ( x^{(n)} w )^{\star}}_{X} \quad \text{(because $X$ is rearrangement invariant)} \\
					& \leqslant \norm{D_2 ( x^{(n)} )^{\star} \, D_2 w^{\star} }_{X} \quad \text{(using \cite[Property~$10^{\circ}$, p.~67]{KPS82})} \\
					& = \norm{D_2 \left[ ( x^{(n)} )^{\star} w^{\star} \right]}_{X} \\
					& \leqslant \norm{D_2}_{X \rightarrow X} \norm{( x^{(n)} )^{\star} w^{\star}}_{X} \quad \text{(because $D_2$ is bounded on $X$)} \\
					& = \norm{D_2}_{X \rightarrow X} \norm{( x^{(n)} )^{\star} w}_{X} \quad \text{(since the function $w$ is decreasing)} \\
					& = \norm{D_2}_{X \rightarrow X} \norm{x^{(n)}}_{[X(w)]^{\bigstar}} \to 0
			\end{align*}
			as $n$ goes to infinity. The proof is finished.
		\end{proof}
	
		\begin{scholium}[About~Lemma~\ref{LEMMA: a'la Kolwicz OC}]
			What seems quite surprising is that our proof is rather elementary, in the sense, that it use neither the concept of the so-called $OC^{\bigstar}$-point
			or the machinery developed in \cite{Kol16}. Actually, the first-named author learned from Kolwicz another, slightly more general,
			proof which directly uses some ideas from \cite{Kol16}. However, let us put it in the drawer for now.
			\demo
		\end{scholium}
		
		\begin{remark}
			As we mentioned earlier, the symmetrization of a K{\" o}the space is in general only a quasi-Banach space (see \cite[Lemma~1.4]{KR09}; cf. \cite[Corollary~1]{KLM19}).
			However, there is a rather simple criterion to determine whether the space $[X(1/\phi)]^{\bigstar}$, where $X$ is a rearrangement invariant sequence space
			and $\phi$ is a non-decreasing and concave function on $(0,\infty)$, is a Banach space up to an equivalent norm. Indeed, suppose that
			\begin{equation*}
				\frac{1}{\beta_X} < 1 - \frac{1}{\gamma_{\phi}}.
			\end{equation*}
			Then the discrete Hardy operator $\mathscr{H}$ is bounded on the space $X(1/\phi)$ (see \cite[Theorem~1]{Mal83};
			see also a note on Pavlov's result on p.~127 in \cite{KMP07}).
			In consequence, the functional $\vertiii{\bullet} \colon [X(1/\phi)]^{\bigstar} \rightarrow \mathbb{R}$ defined as
			\begin{equation*}
				x \rightsquigarrow \vertiii{x}\coloneqq \norm{\frac{x^{\star\star}}{\phi}}_X,
			\end{equation*}
			where $x^{\star \star}$ denote the {\bf maximal function} of $x^{\star}$, that is,
			$x^{\star\star} \coloneqq \left\{ \frac{1}{n} \sum_{k=1}^{n} x^{\star}_k \right\}_{n=1}^{\infty}$
			(see \cite[Definition~3.1, p.~52]{BS88}), defines on the space $[X(1/\phi)]^{\bigstar}$ an equivalent norm.
			In fact, in view of \cite[p.~54]{BS88} the triangle inequality is clear.
			Moreover, for any $x \in [X(1/\phi)]^{\bigstar}$ we have the following inequalities
			\begin{equation*}
				\norm{\frac{x^{\star}}{\phi}}_X
					\leqslant \norm{\frac{x^{\star\star}}{\phi}}_{X}
					= \vertiii{x}
					\leqslant \norm{\mathscr{H}}_{[X(1/\phi)]^{\bigstar} \rightarrow [X(1/\phi)]^{\bigstar}} \norm{\frac{x^{\star}}{\phi}}_X,
			\end{equation*}
			which means that
			\begin{equation*}
				\norm{x}_{[X(1/\phi)]^{\bigstar}} \approx \vertiii{x}.
			\end{equation*}
			\demo
		\end{remark}
		
		After all this preparations, we are finally in the position to provide the promised proof of Theorem~\ref{Prop: computing multipliers from m and to lambda}.
		
		\vspace{5pt}
		\noindent
		{\bf Proof of Theorem~\ref{Prop: computing multipliers from m and to lambda}.}
		{\bf (i)} Under the assumption that $\left\{ \varphi(n)/n \right\}_{n=1}^{\infty} \notin X^{\times}$,
		we have the following equivalences
		\begin{align*}
			M_{\lambda} \colon X \rightarrow \lambda_{\varphi} \quad \text{ is compact } \quad
			& \Longleftrightarrow \quad \lambda \in M(X,\lambda_{\varphi})_o \quad \text{(using Theorem~\ref{thm: ess norm ri})} \\
			& \Longleftrightarrow \quad \lambda \in \left[ \left[ X^{\times}\left( \frac{\varphi(t)}{t} \right) \right]^{\bigstar} \right]_o \quad \text{(by Lemma~\ref{LEMMA: a'la KLM12/14}(ii))} \\
			& \Longleftrightarrow \quad \lambda \in \left[ \left( X^{\times} \right)_o \left( \frac{\varphi(t)}{t} \right) \right]^{\bigstar} \quad \text{(by Lemma~\ref{LEMMA: a'la Kolwicz OC})} \\
			& \Longleftrightarrow \quad \left\{ \frac{\varphi(n)}{n}\lambda_n^{\star} \right\}_{n=1}^{\infty} \in \left( X^{\times} \right)_o.
		\end{align*}
		
		Now, suppose that $\left\{ \varphi(n)/n \right\}_{n=1}^{\infty} \in X^{\times}$.
		Then the first part of the proof of Lemma~\ref{LEMMA: a'la KLM12/14} together with duality results from
		\cite[Theorem~5.2, p.~112]{KPS82} teaches us that
		\begin{equation*}
			M(X, \lambda_{\varphi}) = M\left( (\lambda_{\varphi})^{\times},X^{\times} \right) = M(m_{t/\varphi(t)},X^{\times}) = \ell_{\infty}.
		\end{equation*}
		Therefore, applying Theorem~\ref{thm: ess norm ri} one more time, we see immediately that
		\begin{equation*}
			M_{\lambda} \colon X \rightarrow \lambda_{\varphi} \quad \text{ is compact if, and only if, } \quad \lambda \in (\ell_{\infty})_o = c_0.
		\end{equation*}
		
		{\bf (ii)} Suppose that $\left\{ 1/\varphi(n) \right\}_{n=1}^{\infty} \notin Y$. We have
		\begin{align*}
			M_{\lambda} \colon m_{\varphi} \rightarrow Y \quad \text{ is compact } \quad
			& \Longleftrightarrow \quad \lambda \in M(m_{\varphi},Y)_o \quad \text{(using Theorem~\ref{thm: ess norm ri})} \\
			& \Longleftrightarrow \quad \lambda \in \left[ \left[ Y\left( \frac{1}{\varphi} \right) \right]^{\bigstar} \right]_o \quad \text{(by Lemma~\ref{LEMMA: a'la KLM12/14}(i))} \\
			& \Longleftrightarrow \quad \lambda \in \left[ Y_o\left( \frac{1}{\varphi} \right) \right]^{\bigstar} \quad \text{(by Lemma~\ref{LEMMA: a'la Kolwicz OC})} \\
			& \Longleftrightarrow \quad \left\{ \frac{1}{\varphi(n)}\lambda_n^{\star} \right\}_{n=1}^{\infty} \in Y_o.
		\end{align*}
		
		Now, suppose that $\left\{ 1/\varphi(n) \right\}_{n=1}^{\infty} \in Y$. Then, due to the first part of the proof of Lemma~\ref{LEMMA: a'la KLM12/14},
		\begin{equation*}
			M(m_{\varphi},Y) = \ell_{\infty}.
		\end{equation*}
		Thus, we have
		\begin{equation*}
			M_{\lambda} \colon m_{\varphi} \rightarrow Y \quad \text{ is compact if, and only if, } \quad \lambda \in (\ell_{\infty})_o = c_0.
		\end{equation*}
		
		{\bf (iii)} Suppose that $\limsup_{n \rightarrow \infty} \psi(n)/\varphi(n) = \infty$. Then, in view of the embedding $\lambda_{\psi} \hookrightarrow m_{\psi}$
		(see \cite[Proposition~5.9, p.~70]{BS88}), it is straightforward to see that if $\limsup_{n \rightarrow \infty} \psi(n)/\varphi(n) = \infty$,
		then $\left\{ 1/\varphi(n) \right\}_{n=1}^{\infty} \notin m_{\psi}$. In consequence, $\left\{ 1/\varphi(n) \right\}_{n=1}^{\infty} \notin \lambda_{\psi}$.
		We have
		\begin{align*}
			M(m_{\varphi}, \lambda_{\psi})
				& = \left[ \lambda_{\psi}\left( \frac{1}{\varphi} \right) \right]^{\bigstar} \quad \text{(by Lemma~\ref{LEMMA: a'la KLM12/14}(i))}\\
				& = \left[ \left[ \ell_1(\Delta \psi) \right]^{\bigstar} \left( \frac{1}{\varphi} \right) \right]^{\bigstar}
					\quad \text{(because $\lambda_{\phi} \equiv [\ell_1(\Delta \phi)]^{\bigstar}$, $\Delta \phi(n) \coloneqq \phi(n+1) - \phi(n)$)} \\
				& = \left[ \ell_1(\Delta \psi) \left( \frac{1}{\varphi} \right) \right]^{\bigstar}
					\quad \text{(in view of the equality $\left( \frac{x^{\star}}{\varphi} \right)^{\star} = \frac{x^{\star}}{\varphi}$)} \\
				& = \left[ \ell_1 \left( \frac{\Delta \psi}{\varphi} \right) \right]^{\bigstar},
		\end{align*}
		where $\Delta \psi(n) \coloneqq \psi(n+1) - \psi(n)$ for $n \in \mathbb{N}$. Since the sequence $\left\{ \frac{\psi(n+1)-\psi(n)}{\varphi(n)} \right\}_{n=1}^{\infty}$
		is decreasing and $\ell_1$ is separable, so applying Lemma~\ref{LEMMA: a'la Kolwicz OC}, we get
		\begin{equation*}
			\left[ \left[ \ell_1 \left( \frac{\Delta \psi}{\varphi} \right) \right]^{\bigstar} \right]_o
				= \left[ (\ell_1)_o \left( \frac{\Delta \psi}{\varphi} \right) \right]^{\bigstar}
				= \left[ \ell_1 \left( \frac{\Delta \psi}{\varphi} \right) \right]^{\bigstar}.
		\end{equation*}
		Let us make it clear that the use of Lemma~\ref{LEMMA: a'la Kolwicz OC} was completely justified in this situation, because
		$\left\{ \Delta \psi(n) / \varphi(n) \right\}_{n=1}^{\infty} \notin \ell_1$. In fact, if this were not the case, then
		$\left\{ 1/\varphi(n) \right\}_{n=1}^{\infty} \in \lambda_{\psi}$ which, due to our assumptions, is impossible.
		All this means is that the sequence $\lambda = \{\lambda_n\}_{n=1}^{\infty}$ belongs to $M(m_{\varphi}, \lambda_{\psi})_o$ if,
		and only if, the sequence $\left\{ \frac{\psi(n+1)-\psi(n)}{\varphi(n)} \lambda_n^* \right\}_{n=1}^{\infty}$ belongs to $\ell_1$.
		Since the space $\lambda_{\psi}$ is order continuous (see Remark~\ref{REMARK: separable part lorentz marcinkiewicz}),
		so the rest follows from Theorem~\ref{THM: Essential norm Kothe sequence}.
		
		{\bf (iv)} Suppose that $\limsup_{n \rightarrow \infty} \varphi(n)/\psi(n) = \infty$. Using the known equality between the fundamental
		functions of the space $X$ and the fundamental function of its K{\" o}the dual $X^{\times}$ (see \cite[Theorem~5.2, p.~66]{BS88}), that is,
		\begin{equation*}
			\varphi_X(n)\varphi_{X^{\times}}(n) = n \quad \text{ for } \quad n \in \mathbb{N},
		\end{equation*}
		and arguing as at the beginning of the proof of (iii), it is clear that if $\limsup_{n \rightarrow \infty} \varphi(n)/\psi(n) = \infty$,
		then $\left\{ \varphi(n)/n \right\}_{n=1}^{\infty} \notin (\lambda_{\psi})^{\times} = m_{t/\psi(t)}$.
		In consequence, we have
		\begin{align*}
			M(\lambda_{\psi}, \lambda_{\varphi})
				& = \left[ \left( \lambda_{\psi} \right)^{\times}\left( \frac{\varphi(t)}{t} \right) \right]^{\bigstar}
					\quad \text{(by Lemma~\ref{LEMMA: a'la KLM12/14}(ii))} \\
				& = \left[ m_{t/\psi(t)} \left( \frac{\varphi(t)}{t} \right) \right]^{\bigstar} \quad \text{(using \cite[Theorem~5.2, p.~112]{KPS82})}\\
				& = \left[ \left[ \ell_{\infty}\left( \frac{t}{\psi(t)} \right) \right]^{\bigstar} \left( \frac{\varphi(t)}{t} \right) \right]^{\bigstar}
					\quad \text{(because $m_{\phi} = [\ell_{\infty}(\phi)]^{\bigstar}$ provided $\gamma_{\phi} > 1$)}.
		\end{align*}
		Next, keeping in mind that the function $t \rightsquigarrow \varphi(t)/t$ is decreasing and using the obvious fact that the product of
		two decreasing functions is still decreasing, we get
		\begin{equation*}
			\left[ \left[ \ell_{\infty}\left( \frac{t}{\psi(t)} \right) \right]^{\bigstar} \left( \frac{\varphi(t)}{t} \right) \right]^{\bigstar}
				= \left[ \ell_{\infty}\left( \frac{t}{\psi(t)} \right) \left( \frac{\varphi(t)}{t} \right) \right]^{\bigstar}
				= \left[ \ell_{\infty}\left( \frac{\varphi}{\psi} \right) \right]^{\bigstar}.
		\end{equation*}
		Now, since $\left\{ \varphi(n)/\psi(n) \right\}_{n=1}^{\infty} \notin \ell_{\infty}$
		(otherwise, $\left\{ \varphi(n)/n \right\}_{n=1}^{\infty} \in m_{t/\psi(t)}$ and this is contrary to our assumptions),
		so we can use Lemma~\ref{LEMMA: a'la Kolwicz OC} and, consequently, we get
		\begin{equation*}
			\left[ \left[ \ell_{\infty}\left( \frac{\varphi}{\psi} \right) \right]^{\bigstar} \right]_o
				= \left[ (\ell_{\infty})_o \left( \frac{\varphi}{\psi} \right) \right]^{\bigstar}
				= \left[ c_0 \left( \frac{\varphi}{\psi} \right) \right]^{\bigstar}.
		\end{equation*}
		Hence it is clear that the sequence $\lambda = \{\lambda_n\}_{n=1}^{\infty}$ belongs to the space $M(\lambda_{\psi}, \lambda_{\varphi})_o$ if, and only if,
		the sequence $\left\{ \frac{\varphi(n)}{\psi(n)} \lambda_n^{\star} \right\}_{n=1}^{\infty}$ belongs to $c_0$. The use of Theorem~\ref{THM: Essential norm Kothe sequence}
		together with the information that the space $\lambda_{\varphi}$ is order continuous (again, see Remark~\ref{REMARK: separable part lorentz marcinkiewicz}) completes the proof.
		\hfill $\blacksquare$
	
\section{{\bf Open ends}} \label{SECTION: Open ends}

Let us list and discuss some problems which arise from this paper.

\subsection{Things that should not be}

We believe that the assumptions about separability and reflexivity present in Theorem~\ref{THM: essnorm multipliers function}
and Theorem~\ref{THM: Essential norm Kothe sequence} are unnecessary or, at least, can be significantly weakened. This belief
is clearly supported by Theorem~\ref{thm: ess norm ri} (which, however, relies on some heavy machinery) and Schep's results from \cite{Sch22}.
Moreover, although we have not checked all the details, our global assumption of $\sigma$-finiteness of the measure space does
not seem to be essential (of course, some modifications, such as the use of limits along Fr{\' e}chet filters, are necessary,
but here again we can recycle the ideas from \cite{Sch22}).

\subsection{The general case}

Although Theorem~\ref{THM: essnorm multipliers function} together with Theorem~\ref{THM: Essential norm Kothe sequence} provide an exact estimate of the essential norms,
unfortunately, their conglomeration in the form of Theorem~\ref{THM: ess norm func + seq} loses this value. Therefore, in this context, it is natural to ask the following question.

\begin{problem}
	{\it Let $X$ and $Y$ be two K{\" o}the spaces with the Fatou property both defined over the same $\sigma$-finite measure space $(\Omega,\Sigma,\mu)$.
		Suppose they can be represented, up to the equality of norms, as a direct sums of their atomic and non-atomic parts, that is, there are two two-dimensional
		K{\" o}the spaces, say $E$ and $F$, such that}
	\begin{equation*}
		X \equiv X(\Omega_a) \oplus_E X(\Omega_c) \quad \textit{ and } \quad Y \equiv Y(\Omega_a) \oplus_F Y(\Omega_c).
	\end{equation*}
	{\it Do we then have the equality}
	\begin{equation*}
		\norm{M_{\lambda} \colon X \rightarrow Y}_{ess}
		= \norm{ \left\{ \norm{M_{\lambda} \colon X(\Omega_{\bullet}) \rightarrow Y(\Omega_{\bullet})}_{ess} \right\}_{\bullet \in \{a, c\}} }_{M(E,F)} \text{\it ?}
	\end{equation*}
\end{problem}

It is worth recalling here that Schep in \cite{Sch22} (precisely, see Theorem~5.2 ibidem) was able to show that
for a complex K{\" o}the sequence space $X$ the following formula holds
\begin{equation*}
	\norm{M_{\lambda} \colon X \rightarrow X}_{ess}
		= \max \left\{ \norm{M_{\lambda} \colon X(\Omega_a) \circlearrowleft }_{ess}, \norm{M_{\lambda} \colon X(\Omega_c) \circlearrowleft }_{ess} \right\}.
\end{equation*}
His proof, however, rely heavily on the spectral theory and does not work in the non-algebraic setting.

\subsection{On Pitt's compactness theorem once again}

It follows from Example~\ref{PROPOSITION: Appendix A - counterexample factorization} that a direct generalization of Pitt's theorem
to the scale of rearrangement invariant spaces is not possible (see Remark~\ref{REMARK: general Pitt}).
With \cite{DLMR00} and \cite{LM02} in mind, the following problem seems tempting.

\begin{problem}
	{\it Let $X$ and $Y$ be two rearrangement invariant sequence spaces.
	Suppose that $X$ and $Y$ are strongly separated, that is, $\alpha_X > \beta_Y$.
	Is it then that $\mathscr{L}(X,Y) = \mathscr{K}(X,Y)$?}
\end{problem}

\subsection{The secret live of approximation numbers}

	Let $X$ and $Y$ be two Banach spaces. Recall, that the {\bf $n^{\text{th}}$ approximation number $a_n(T \colon X \rightarrow Y)$}
	of an operator $T$ acting between $X$ and $Y$ is given by
	\begin{equation*}
		a_n(T \colon X \rightarrow Y) \coloneqq \inf \Big\{ \norm{T - T_n}_{X \rightarrow Y} \colon \text{rank}(T_n) \leqslant n \Big\}.
	\end{equation*}
	Roughly speaking, the asymptotic behavior of $a_n(T \colon X \rightarrow Y)$ describes the degree of compactness of $T$.
	In particular, as was proved by Pietsch, the operator $T$ is approximable if, and only if, $\lim_{n \rightarrow \infty} a_n(T \colon X \rightarrow Y) = 0$.
	Moreover, $\lim_{n \rightarrow \infty} a_n(T \colon X \rightarrow Y) = \norm{T \colon X \rightarrow Y}_{ess}$.

	Now, it is not very hard to show (see \cite[Theorem~2.6]{HMR76} and references therein) that for $1 < p < q < \infty$ with $1/r = 1/q - 1/p$
	and $n \in \mathbb{N}$ we have the following formula
	\begin{equation*}
		a_n(M_{\lambda} \colon \ell_q \rightarrow \ell_p)
			= \left( \sum_{k=n+1}^{\infty} {\lambda_k^{\star}}^r \right)^{1/r}
			= \norm{ \sum_{i=n+1}^{\infty} \lambda^{\star}_i }_{M(\ell_q,\ell_p)}.
	\end{equation*}
	On the other hand, the most obvious guess that
	\begin{equation*}
		a_n(M_{\lambda} \colon \ell_p \rightarrow \ell_q)
			= \norm{ \sum_{i=n+1}^{\infty} \lambda^{\star}_i }_{M(\ell_p,\ell_q)}
			= \sup_{k = n+1, n+2, ...} \lambda_k^*
			= \lambda_{n+1}^{\star}
	\end{equation*}
	does not seem to work\footnote{However, it can be shown that $a_n(M_{\lambda} \colon \ell_p \rightarrow \ell_p) = \lambda_{n+1}$ (see \cite[Theorem~7.1]{Pie74} and \cite[p.~58]{HMR76}).}
	(see \cite[Example~2.13]{HMR76}).
	
	\begin{problem}
		{\it Describe the pairs of rearrangement invariant sequence spaces $E$ and $F$ for which we have the equality}
		\begin{equation*}
			a_n(M_{\lambda} \colon E \rightarrow F) = \norm{ \sum_{i=n+1}^{\infty} \lambda^{\star}_i }_{M(E,F)}.
		\end{equation*}
		{\it What then can be said about $a_n(M_{\lambda} \colon F \rightarrow E)$?}
	\end{problem}
	
	Of course, approximation numbers are only a special members of a large family of the so-called {\it $s$-numbers}
	(including, for example, Bernstein, Gelfand, Hilbert, Kolmogorov and Weyl numbers; see \cite[Section~7]{Pie74} and \cite{Pie80}).
	Note also that the so-called {\it entropy numbers} of multiplication operators have been found in \cite{Kuh05} and \cite{Sch84}.
	
\subsection{Some other operator ideals}

Let $\mathscr{I}$ be a closed operator ideal in the sense of Pietsch. Further, let $X$ and $Y$ be two K{\" o}the spaces
both defined over the same $\sigma$-finite measure space $(\Omega,\Sigma,\mu)$.
By $M_{\mathscr{I}}(X,Y)$ we will understand the space of all symbols $\lambda$ from $M(X,Y)$ with the property that induced
multiplication operator $M_{\lambda} \colon X \rightarrow Y$ belongs to the ideal $\mathscr{I}(X,Y)$.
In brief, $M_{\mathscr{I}}(X,Y) = \mathscr{I} \cap M(X,Y)$.
Clearly, $M_{\mathscr{I}}(X,Y)$ is a closed subspace of $M(X,Y)$.
Moreover, with this definition in hand, we can easily estimate the essential norm of the multiplication operator
$M_{\lambda} \colon X \rightarrow Y$ with respect to the ideal $\mathscr{I}$ in the following way
\begin{align*}
	\text{dist}_{\mathscr{L}(X,Y)}(M_{\lambda} \colon X \rightarrow Y,\mathscr{I}(X,Y))
	& = \inf\limits_{J \in \mathscr{I}(X,Y)} \norm{M_{\lambda} - J}_{X \rightarrow Y} \\
	& \leqslant \inf\limits_{M_{\xi} \in \mathscr{I}(X,Y)} \norm{M_{\lambda} - M_{\xi}}_{X \rightarrow Y} \\
	& = \inf\limits_{\xi \in M_{\mathscr{I}}(X,Y)} \norm{\lambda - \xi}_{M(X,Y)} \\
	& = \text{dist}_{M(X,Y)}(\lambda, M_{\mathscr{I}}(X,Y)).
\end{align*}
Thus, looking at Theorem~\ref{Proposition: odleglosci = odleglosci} from this broader perspective, it is hard not to formulate the following

\begin{problem}
	{\it Under what assumptions on $X$ and $Y$ do we have the equality}
	\begin{equation*}
		\text{dist}_{\mathscr{L}(X,Y)}(M_{\lambda} \colon X \rightarrow Y,\mathscr{I}(X,Y)) = \text{dist}_{M(X,Y)}(\lambda, M_{\mathscr{I}}(X,Y)) \textit{?}
	\end{equation*}
\end{problem}

Aside from the general theory, perhaps the most interesting question concerns the description of symbols that generate strictly singular and nuclear multipliers.

\subsection{Compact multipliers between Tandori function spaces}

For a given K{\" o}the function space $X$, by the {\bf abstract Tandori function space $\widetilde{X}$} we mean a vector space of all real-valued measurable functions,
say $f$, with $\widetilde{f} \in X$, where $\widetilde{f}$ is the {\bf least decreasing majorant} of a given function $f$,
that is, $\widetilde{f}(x) \coloneqq \esssup_{t \geqslant x} \abs{f(t)}$.
We equip $\widetilde{X}$ with the norm $\norm{f}_{\widetilde{X}} \coloneqq \Vert \widetilde{f} \Vert_X$.
It is straightforward to see that the ideal $(\widetilde{X})_o$ is trivial (see, for example, \cite[Theorem~1(e)]{LM15a};
in fact, due to \cite[Proposition~4.8]{KKM21}, the space $\widetilde{X}$ even contains a lattice isometric copy of $\ell_{\infty}$).
Thus, in light of Theorem~\ref{THM: essnorm multipliers function} and Theorem~\ref{THM: weak ess norm}, we want to pose the following

\begin{problem}
	{\it What can we say about (weakly) compact multipliers between $\widetilde{X}$ and $\widetilde{Y}$?}
\end{problem}

\subsection{Missing multipliers}

The following problem, if solved, will complete Lemma~\ref{LEMMA: a'la KLM12/14}.

\begin{problem}
	{\it Describe the space $M(\lambda_{\varphi}, m_{\psi})$.}
\end{problem}
	
\appendix

\section{{\bf One counter-example to the factorization problem}} \label{Appendix B}

Let $(\Omega,\Sigma,\mu)$ be a $\sigma$-finite measure space.
Celebrated Lozanovski{\u \i}'s result says that each function $f$ from $L_1(\Omega)$ can be written as a pointwise product of two functions,
say $g$ and $h$, where the first one belongs to a K{\" o}the space $X$ and the second one comes from its K{\" o}the dual $X^{\times}$,
in such a way that
\begin{equation*}
	\norm{f}_{L_1(\Omega)} \approx \norm{g}_X \norm{h}_{X^{\times}}
\end{equation*}
(see \cite[Theorem~6]{Loz69}; cf. \cite[p.~185]{Ma89}). Using the language of the arithmetic of K{\" o}the spaces we can
express this result in compact, but equivalent, form as
\begin{equation*}
	X \odot X^{\times} = X \odot M(X, L_1(\Omega)) = L_1(\Omega).
\end{equation*}
From here, however, the short road leads to the much more general question if for a given pair of K{\" o}the spaces, say $X$ and $Y$,
the following equality holds
\begin{equation*}
	X \odot M(X,Y) = Y?
\end{equation*}
We will refer to this kind of question as the {\bf factorization problem}.

\begin{remark}
	It is not particularly difficult to find a pair of K{\" o}the sequence spaces for which the factorization problem has a negative solution.
	For example, let $X$ and $Y$ be two K{\" o}the spaces such that $X \neq Y$, but $M(X,Y) = L_{\infty}(\Omega)$. Then
	\begin{equation*}
		X \odot M(X,Y) = X \odot L_{\infty}(\Omega) = X \neq Y,
	\end{equation*}
	that is, $X$ does not factorize through $Y$. As a more concrete illustration, take $1 < p < \infty$ and notice that
	\begin{equation*}
		\ell_p \odot M(\ell_p, \ell_{p,\infty}) = \ell_p \odot \ell_{\infty} = \ell_p \subsetneqq \ell_{p,\infty}.
	\end{equation*}
	This clearly means that $\ell_p$ does not factorize through $\ell_{p,\infty}$. On the other hand, using duality argument, we can also show that
	\begin{align*}
		\ell_{p,1} \odot M(\ell_{p,1}, \ell_{p})
			& = \ell_{p,1} \odot M((\ell_{p})^{\times}, (\ell_{p,1})^{\times}) \\
			& = \ell_{p,1} \odot M(\ell_q, \ell_{q,\infty}) \\
			& = \ell_{p,1} \odot \ell_{\infty} \\
			& = \ell_{p,1} \subsetneqq \ell_{p},
	\end{align*}
	where $1/p + 1/q = 1$. Consequently, also $\ell_{p,1}$ does not factorize through $\ell_p$.
	\demo
\end{remark}

Based on this two\footnote{To the best of our knowledge, all known counterexamples to the factorization problem are of this type (being precise,
in the case of K{\" o}the function spaces we have one more possibility, namely, the space $M(X,Y)$ can also be trivial, but this is really clich{\' e}d).}
examples, it is not entirely unreasonable to suppose that if $M(X,Y) \neq L_{\infty}(\Omega)$ then the factorization problem does have the positive solution.
We will show that this is far from the truth even in the class of rearrangement invariant spaces.

\begin{example} \label{PROPOSITION: Appendix A - counterexample factorization}
	{\it There exists an Orlicz function, say $\widetilde{M}$, with the property that the corresponding Orlicz space $\ell_{\widetilde{M}}$
		does not factorize through $\ell_2$, that is,}
		\begin{equation*}
			\ell_{\widetilde{M}} \odot M(\ell_{\widetilde{M}}, \ell_2) \neq \ell_2,
		\end{equation*}		
	{\it but at the same time $M(\ell_{\widetilde{M}}, \ell_2) \neq \ell_{\infty}$.}
\end{example}
\begin{proof}
	First of all, let's define the Orlicz function $\widetilde{M}$ in the following\footnote{Roughly speaking the space $\ell_{\widetilde{M}}$
	is \enquote{very close} to $\ell_2$, but at the same time the Orlicz function $\widetilde{M}$ goes to zero \enquote{much faster} (in the sense of the $\Delta_2$-condition)
	than the function $t \rightsquigarrow t^2$.} way
	\begin{equation*}
		\widetilde{M}(t) \coloneqq
		\begin{cases}
			\hfil \frac{2}{n!}t - \frac{1}{(n!)^2} & \text{ for } \quad \hfil \frac{n+2}{2(n+1)!} \leqslant t < \frac{n+1}{2n!},\ n \geqslant 1\\
			\hfil t^2  & \text{ for } \quad \hfil 1 \leqslant t < \infty.
		\end{cases}
	\end{equation*}
	We divide the proof into three parts.
	
	{\bf First step}. We will start by calculating the space $M(\ell_{\widetilde{M}}, \ell_2)$. Since the space $\ell_2$ can obviously be treated
	as an Orlicz space, say $\ell_N$, generated by the function $N(t) = t^2$, so our problem comes down to calculating the space of pointwise
	multipliers between two Orlicz sequence spaces. Luckily, Djakov and Ramanujan \cite{DR00} come to our aid. Based on their results,
	we conclude that
	\begin{equation*}
		M(\ell_{\widetilde{M}}, \ell_2) = \ell_{N \ominus \widetilde{M}},
	\end{equation*}
	where
	\begin{equation*}
		( N \ominus \widetilde{M} )(t) \coloneqq \sup_{0 \leqslant s \leqslant 1}\left[ N(st) - \widetilde{M}(s) \right]
			\quad \text{ for } \quad t \geqslant 0
	\end{equation*}
	is the so-called generalized Young conjugate of $\widetilde{M}$ with respect to $N$ (see \cite[Theorem~3]{DR00}; see also \cite{LT21}).
	Now, we clearly have
	\begin{equation*}
		( N \ominus \widetilde{M} )(t) = \sup\limits_{0 \leqslant s \leqslant 1}\left[ N(st) - \widetilde{M}(t) \right]
			= \sup\limits_{\substack{n \in \mathbb{N}\\ \frac{n+2}{2(n+1)!} \leqslant s < \frac{n+1}{2n!}}} \left[ (st)^2 - \frac{2}{n!}s + \frac{1}{(n!)^2} \right].
	\end{equation*}
	However, it is not difficult to see that the functions that appear under the supremum do not attain their maximum values within the respective intervals,
	so the values at the ends of the intervals should be considered, that is,
	\begin{align*}
		( N \ominus \widetilde{M} )(t)
			& = \max\limits_{\ n \in \mathbb{N}}\left[ \left( \frac{n+2}{2(n+1)!} \right)^{2} t^2 - \frac{2}{n!}\frac{n+2}{2(n+1)!}
				+ \frac{1}{(n!)^{2}}, \left( \frac{n+1}{2n!} \right)^{2} t^2 - \frac{2}{n!}\frac{n+1}{2n!} + \frac{1}{(n!)^{2}}\right]\\
			& = \max\limits_{\ n\in \mathbb{N}}\left[ \frac{(n+2)^{2}t^{2} - 8(n+2)(n+1) + 4(n+1)^{2}}{4\left( (n+1)! \right)^{2}},\frac{(n+1)^{2}t^{2}-4n}{4(n!)^{2}} \right].
	\end{align*}
	Since the function
	\begin{equation*}
		t \rightsquigarrow \frac{(n+2)^2 t^2 - 8(n+2)(n+1) + 4(n+1)^2}{4\left( (n+1)! \right)^2}
	\end{equation*}
	takes only negative values for $t \geqslant 0$ and $n \in \mathbb{N}$, so
	\begin{equation*}
		( N \ominus \widetilde{M} )(t) = \max\limits_{\ n\in \mathbb{N}}\left[ \frac{(n+1)^{2}t^{2}-4n}{4(n!)^{2}} \right].
	\end{equation*}
	Hence, it is not very hard to see that
	\begin{equation*}
		( N \ominus \widetilde{M} )(t) = 
		\begin{cases}
			\hfil 0 &\text{ for } \hfil t = 0\\
			\quad \hfil \frac{(n+1)^2t^2 - 4n}{4(n!)^2}
			& \text{ for } \quad \hfil \frac{2\sqrt{n+\varepsilon_n}}{n+1} < t \leqslant \frac{2\sqrt{n-1+\varepsilon_{n-1}}}{n},\ n > 1,
		\end{cases}
	\end{equation*}
	where $\varepsilon_n = O(n^{-2})$ (here, the letter \enquote{$O$} stands from the {\it big Oh notation} or the {\it Bachmann--Landau notation};
	we don't need to be more precise than that, as we pay attention only to \enquote{small} arguments and we consider all the equalities between spaces
	up to the equality of norms).
	
	{\bf Second step}. Next we will show that the space $M(\ell_{\widetilde{M}}, \ell_2)$ is not separable, but $M(\ell_{\widetilde{M}}, \ell_2) \neq \ell_{\infty}$.
	Indeed, since the function $N \ominus \widetilde{M}$ does not satisfy the $\Delta_2$-condition (to see this just put $u_n \coloneqq\frac{2\sqrt{n+\varepsilon_n}}{n+1}$
	for $n \in \mathbb{N}$ and observe that $\frac{u_{n+1}}{u_n} \rightarrow 1$ as $n \rightarrow \infty$, while $\frac{\varphi(u_{n+1})}{\varphi(u_{n})} \approx \frac{1}{n} \rightarrow 0$
	as $n \rightarrow \infty$), so
	\begin{equation*}
		M(\ell_{\widetilde{M}}, \ell_2)_o = \left( \ell_{N \ominus \widetilde{M}} \right)_o \neq \ell_{N \ominus \widetilde{M}} = M(\ell_{\widetilde{M}}, \ell_2)
	\end{equation*}
	(see \cite[Proposition~4.a.4, p.~138]{LT77} or \cite[Corollary, pp.~21--22]{Ma89}).
	On the other hand, it is straightforward to see that an Orlicz space, say $\ell_F$, coincides with $\ell_{\infty}$ if, and only if,
	the Young function $F$ vanishes in some neighborhood of zero. However,
	\begin{equation*}
		\inf \left\{ t > 0 \colon ( N \ominus \widetilde{M} )(t) > 0 \right\} = 0,
	\end{equation*}
	so this is not the case and
	\begin{equation*}
		M(\ell_{\widetilde{M}}, \ell_2) = \ell_{N \ominus \widetilde{M}} \neq \ell_{\infty}.
	\end{equation*}
	
	{\bf Third step}. Finally, we will prove that $\ell_{\widetilde{M}}$ does not factorize $\ell_2$.
	To show this we will use the characterization from \cite[Theorem 2]{LT21} which states that for a pair of Orlicz functions,
	say $M_0$ and $M_1$, the factorization $\ell_{M_0}\odot M(\ell_{M_0}, \ell_{M_1}) = \ell_{M_1}$ holds if, and only if, there exist
	constants $C, D > 0$ with
	\begin{equation*}
		C M_1^{-1}(t) \leqslant M_0^{-1}(t)(M_1 \ominus M_0)^{-1}(t) \leqslant DM_1^{-1}(t) \quad \text{ for all } \quad 0 < t < 1.
	\end{equation*}
	Consequently, it is enough to show that there is no constant $C > 0$ such that for all $0 < t < 1$ we have the inequality
	\begin{equation*}
		C N^{-1}(t)\leqslant \left( \widetilde{M} \right)^{-1}(t)\left( N \ominus \widetilde{M} \right)^{-1}(t).
	\end{equation*}
	Suppose otherwise. Then, for some constant $C > 0$ and all $n > 1$, we have
	\begin{align*}
		C \leqslant \frac{\left( \widetilde{M} \right)^{-1}\left(\frac{n}{(n!)^2}\right)\left( N \ominus \widetilde{M} \right)^{-1}\left(\frac{n}{(n!)^2}\right)}{N^{-1}\left(\frac{n}{(n!)^2}\right)}
		  & = \frac{\frac{\sqrt{n}}{n!}}{\frac{n+1}{2n!}}\left( N \ominus \widetilde{M} \right)^{-1}\left(\frac{n}{(n!)^2}\right) \\
		  & = 2\frac{\sqrt{n}}{n+1}\left( N \ominus \widetilde{M} \right)^{-1}\left(\frac{n}{(n!)^2}\right) \\
		  & \leqslant \frac{2\left( N \ominus \widetilde{M} \right)^{-1}\left(\frac{n}{(n!)^2}\right)}{\sqrt{n}}.
	\end{align*}
	But this means that
	\begin{equation*}
		\left( N \ominus \widetilde{M} \right)^{-1}\left(\frac{n}{(n!)^2}\right) \geqslant \frac{1}{2}C\sqrt{n} \quad \text{ for all } \quad n > 1.
	\end{equation*}
	Here we get a contradiction, because this is impossible.
\end{proof}

Note also that since the space $\ell_2$ is reflexive and, as the second part of the proof of Example~\ref{PROPOSITION: Appendix A - counterexample factorization}
shows, the space $M(\ell_{\widetilde{M}}, \ell_2)$ is not separable, so we can use Theorem~\ref{THM: Essential norm Kothe sequence} and easily get the following

\begin{corollary} \label{COR: Appendix A}
	{\it $\mathscr{L}(\ell_{\widetilde{M}}, \ell_2) \neq \mathscr{K}(\ell_{\widetilde{M}}, \ell_2)$.}
\end{corollary}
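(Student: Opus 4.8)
The plan is to exhibit a single bounded but non-compact operator between the two spaces, and the most economical way to do this is to realize it as a multiplication operator and apply the compactness criterion of Theorem~\ref{THM: Essential norm Kothe sequence}. First I would note that both $\ell_{\widetilde{M}}$ and $\ell_2$ are K{\" o}the sequence spaces with the Fatou property, and that $\ell_2$, being separable, is order continuous. Thus, taking $X = \ell_{\widetilde{M}}$ and $Y = \ell_2$, the hypotheses of Theorem~\ref{THM: Essential norm Kothe sequence} are satisfied (the requirement being that either $X$ is reflexive or $Y$ is order continuous), and the theorem yields that a multiplication operator $M_{\lambda} \colon \ell_{\widetilde{M}} \rightarrow \ell_2$ is compact if, and only if, its symbol $\lambda$ belongs to the ideal $M(\ell_{\widetilde{M}}, \ell_2)_o$.

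Next I would invoke the conclusion reached in the second step of the proof of Proposition~\ref{PROPOSITION: Appendix A - counterexample factorization}, where it was shown that $M(\ell_{\widetilde{M}}, \ell_2) = \ell_{N \ominus \widetilde{M}}$ is \emph{not} order continuous; indeed, the generating function $N \ominus \widetilde{M}$ fails the $\Delta_2$-condition at zero, which forces $M(\ell_{\widetilde{M}}, \ell_2)_o$ to be a proper subspace of $M(\ell_{\widetilde{M}}, \ell_2)$. Consequently one may select a symbol $\lambda_0 \in M(\ell_{\widetilde{M}}, \ell_2) \setminus M(\ell_{\widetilde{M}}, \ell_2)_o$.

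Finally, for such a $\lambda_0$ the associated multiplication operator is bounded precisely because $\lambda_0 \in M(\ell_{\widetilde{M}}, \ell_2)$, so $M_{\lambda_0} \in \mathscr{L}(\ell_{\widetilde{M}}, \ell_2)$, whereas the criterion recorded in the first paragraph forces $M_{\lambda_0} \notin \mathscr{K}(\ell_{\widetilde{M}}, \ell_2)$. This single witness already separates the two ideals and delivers $\mathscr{L}(\ell_{\widetilde{M}}, \ell_2) \neq \mathscr{K}(\ell_{\widetilde{M}}, \ell_2)$, as desired. I do not anticipate any genuine obstacle here: all of the substantial labour—constructing the Orlicz function $\widetilde{M}$, computing $M(\ell_{\widetilde{M}}, \ell_2)$ explicitly, and verifying its failure of order continuity—has already been carried out in Proposition~\ref{PROPOSITION: Appendix A - counterexample factorization}, so the corollary amounts to harvesting that analysis through Theorem~\ref{THM: Essential norm Kothe sequence}.
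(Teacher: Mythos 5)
Your proposal is correct and follows essentially the same route as the paper: the paper likewise combines the non-separability of $M(\ell_{\widetilde{M}}, \ell_2)$, established in the second step of the proof of Proposition~\ref{PROPOSITION: Appendix A - counterexample factorization}, with Theorem~\ref{THM: Essential norm Kothe sequence} to produce a bounded non-compact multiplication operator. The only cosmetic difference is that you verify the theorem's hypothesis via the order continuity of $\ell_2$, whereas the paper cites its reflexivity; both are equivalent justifications here.
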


\begin{remark}[About Corollary~\ref{COR: Appendix A}]
	Remembering that $\ell_2 \hookrightarrow \ell_{\widetilde{M}}$ it is instructive to compare Corollary~\ref{COR: Appendix A}
	with our discussion about Pitt's compactness theorem in Remark~\ref{REMARK: general Pitt}.
	\demo
\end{remark}

\end{document}